\documentclass[12pt]{amsart}
\usepackage{amsthm,amsmath,amsfonts,amssymb,latexsym,amscd,mathrsfs,hyperref,graphicx}
\usepackage{setspace}
\usepackage{cite}
\usepackage{phonetic}
\setdisplayskipstretch{2}
\usepackage{cite}

\makeatletter
\renewcommand{\pod}[1]{\allowbreak\mathchoice
  {\if@display \mkern 18mu\else \mkern 8mu\fi (#1)}
  {\if@display \mkern 18mu\else \mkern 8mu\fi (#1)}
  {\mkern4mu(#1)}
  {\mkern4mu(#1)}
}

\makeatletter
\@namedef{subjclassname@2020}{%
  \textup{2020} Mathematics Subject Classification}
\makeatother

\theoremstyle{plain}
\newtheorem{thm}{Theorem}
\newtheorem{lem}{Lemma}

\newtheorem*{thm*}{Theorem}
\newtheorem*{cor*}{Corollary}
\newtheorem*{prop*}{Proposition}

\theoremstyle{definition}

\newtheorem*{exa*}{Example}

\newcommand{\ds}{\displaystyle}
\newcommand{\bs}{\boldsymbol}
\newcommand{\mb}{\mathbb}
\newcommand{\mc}{\mathcal}

\def \a{\alpha} \def \b{\beta}   \def \g{\gamma}  \def \l{\lambda}  \def \t{\theta} 

\makeatletter
\def\widebreve{\mathpalette\wide@breve}
\def\wide@breve#1#2{\sbox\z@{$#1#2$}%
     \mathop{\vbox{\m@th\ialign{##\crcr
\kern0.08em\brevefill#1{0.8\wd\z@}\crcr\noalign{\nointerlineskip}%
                    $\hss#1#2\hss$\crcr}}}\limits}
\def\brevefill#1#2{$\m@th\sbox\tw@{$#1($}%
  \hss\resizebox{#2}{\wd\tw@}{\rotatebox[origin=c]{90}{\upshape(}}\hss$}
\makeatletter

\numberwithin{equation}{section}

\setlength{\leftmargini}{1.75em} \setlength{\leftmarginii}{1.75em}
\renewcommand{\labelenumi}{\setlength{\labelwidth}{\leftmargin}
   \addtolength{\labelwidth}{-\labelsep}
   \hbox to \labelwidth{\theenumi.\hfill}}

\begin{document}
\title{Some Diophantine equations and inequalities with primes}
\author{Roger Baker}
\address{Department of Mathematics\newline
\indent Brigham Young University\newline
\indent Provo, UT 84602, U.S.A}
\email{baker@math.byu.edu}

 \begin{abstract}
We consider the solutions to the inequality
 \[|p_1^c + \cdots + p_s^c - R| < R^{-\eta}\]
(where $c > 1$, $c \not\in \mb N$ and $\eta$ is a small positive number; $R$ is large). We obtain new ranges of $c$ for which this has many solutions in primes $p_1, \ldots, p_s$, for $s = 2$ (and `almost all' $R$), $s=3$, 4 and 5.

We also consider the solutions to the equation in integer parts
 \[[p_1^c] + \cdots + [p_s^c] = r\]
where $r$ is large. Again $c> 1$, $c\not\in \mb N$. We obtain new ranges of $c$ for which this has many solutions in primes, for $s=3$ and 5.
 \end{abstract}

\keywords{exponential sums, the alternative sieve, the Hardy-Littlewood method, the Davenport-Heilbronn method.}

\subjclass[2020]{Primary 11N36; secondary 11L20, 11P55}

\maketitle

 \section{Introduction}\label{sec:intro}
 
Let $c > 1$, $c \not\in \mb N$. Let $\eta$ be a small positive number depending on $c$. Let $R$ be a large positive number. We consider solutions in primes of the inequality
 \[|p_1^c + \cdots + p_s^c - R| < R^{-\eta}\tag*{$(1)_s$}
 \label{eq(1)s}\]
first studied by \u Sapiro-Pyateck\(\widebreve{\i\i}\) \cite{sap}. We also consider the equation in integer parts
 \[[p_1^c] + \cdots + [p_s^c]=r.\tag*{$(2)_s$}
 \label{eq(2)s}\]
We give results providing large numbers of solutions of \ref{eq(1)s} $(s=2,3,4,5)$ and \ref{eq(2)s} $(s=3,5)$ for new ranges of $c$. For $s=2$, one needs to restrict $R$ to `almost all' real numbers in an interval $[V,2V]$.

Following a nice innovation in a paper of Cai \cite{cai3}, there has been recent progress in all these cases; see below for details. In the present paper progress is made by combining this innovation with the powerful exponential sum bounds of Huxley \cite{hux}, Bourgain \cite{bou}, and Heath-Brown \cite{hb3}. When discussing $(1)_3$, $(1)_5$ and $(2)_5$, we use a vector sieve in conjunction with the Harman sieve. The other cases are simpler and Heath-Brown's generalized Vaughan identity replaces the sieve method.

We write `$n\sim N$' to signify $N < n \le 2N$. Let $X = R^{1/c}$.

Let $\mc A_s(R)$ denote the number of solutions of \ref{eq(1)s} with $\frac X8 < p_j \le X$ $(j=1,\ldots, s)$. Let $\mc B_s(r)$ denote the number of solutions of \ref{eq(2)s} with $\frac X8 < p_j \le X$ $(j = 1, \ldots,s)$. One expects heuristically to obtain (at least for $c$ not too large) the bounds
 \[\mc A_s(R) \gg \frac{R^{\frac sc - 1 - \eta}}
 {(\log R)^s}\tag*{$(3)_s$}\label{eq3s}\]
and
 \[\mc B_s(r) \gg \frac{r^{\frac sc-1}}{(\log r)^s}.\
 \tag*{$(4)_s$}\label{eq4s}\]

 \begin{thm}\label{thm1}
Let $V$ be large. Suppose that $c < \frac{39}{29} = 1.3448\ldots$, $c \ne 4/3$. We have $(3)_2$ for all $R$ in $[V,2V]$ except for a set of $R$ having measure $O(V \exp(-C(\log V)^{1/4}))$.
 \end{thm}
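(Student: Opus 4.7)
The plan is to run a Davenport--Heilbronn analysis. Introduce the prime-exponential sum
\[
f(\a) = \sum_{X/8 < p \le X}(\log p)\, e(\a p^c),
\]
and a smoothing kernel $K(\a)$ of Davenport--Heilbronn type that majorizes the indicator of $[-R^{-\eta},R^{-\eta}]$. Up to an acceptable error, $\mc A_2(R)(\log X)^2$ equals $\int_{\mb R} f(\a)^2 K(\a) e(-\a R)\, d\a$. I would then dissect $\mb R$ into major arcs $\mf M = \{|\a| \le T_0\}$ with $T_0 = (\log X)^A X^{-c}$, trivial arcs $\mf t = \{|\a| > T_1\}$ with $T_1$ a large power of $\log X$, and the complementary minor arcs $\mf m$.

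On $\mf M$ I would apply the Siegel--Walfisz theorem to replace $f(\a)$ by a smooth integral and then use stationary phase or partial summation to extract the predicted main term of order $R^{2/c - 1 - \eta}/(\log R)^2$, uniformly in $R \in [V,2V]$. On $\mf t$ the rapid decay of $K(\a)$ together with the trivial bound $|f(\a)| \ll X$ renders the contribution negligible.

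The decisive work lies on $\mf m$. I would decompose $f(\a)$ via Heath-Brown's generalized Vaughan identity into Type I and Type II bilinear sums, and estimate each via the exponential sum bounds of Huxley, Bourgain, and Heath-Brown cited in the introduction. A careful choice of exponent pair in each parameter range, tailored to $c$, should yield the pointwise estimate
\[
\sup_{\a \in \mf m} |f(\a)| \ll X \exp\bigl(-C(\log X)^{1/4}\bigr),
\]
throughout $1 < c < 39/29$, apart from $c = 4/3$, where one of the derivative tests in the exponent-pair machinery degenerates.

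For the almost-all conclusion, Plancherel's theorem on $\mb R$ gives
\[
\int_V^{2V}\Bigl|\int_{\mf m} f(\a)^2 K(\a) e(-\a R)\, d\a\Bigr|^2 dR \le \int_{\mf m}|f(\a)|^4 |K(\a)|^2\, d\a,
\]
which in turn is at most $\bigl(\sup_{\mf m}|f|^2\bigr)\bigl(\sup_\a |K(\a)|\bigr)\int_{\mb R}|f(\a)|^2 |K(\a)|\, d\a$. The last integral is $O(X^{2-c-c\eta}/\log X)$, obtained by counting near-diagonal pairs with $p_1^c \approx p_2^c$. Combining this with the minor-arc supremum, the $L^2$-norm of the minor-arc contribution is smaller than the square of the main term by a factor $\exp(-C(\log X)^{1/4})$, and Chebyshev's inequality then yields the stated exceptional set of measure $O(V\exp(-C(\log V)^{1/4}))$. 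The principal obstacle will be establishing the minor-arc supremum up to the sharp threshold $c = 39/29$: this forces a delicate optimal blending of the Huxley, Bourgain, and Heath-Brown estimates across the Type I and Type II ranges arising from Heath-Brown's identity, and it is in this optimisation that the exceptional value $c = 4/3$ emerges.
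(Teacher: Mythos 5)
Your high-level framework (Davenport--Heilbronn integral, Plancherel over $R\in[V,2V]$ to get the almost-all conclusion, Heath-Brown's identity on the minor arcs) is the right skeleton, and the Plancherel step
\[
\int_V^{2V}\Bigl|\int_{\mf m}f(\a)^2K(\a)e(-\a R)\,d\a\Bigr|^2dR \le \int_{\mf m}|f(\a)|^4|K(\a)|^2\,d\a
\]
is exactly what the paper does. But the two quantitative inputs you feed it are both off, and the first error is fatal. You claim $\int_{\mb R}|f(\a)|^2|K(\a)|\,d\a \ll X^{2-c-c\eta}/\log X$. Since $\hat K$ majorizes the indicator of $[-R^{-\eta},R^{-\eta}]$, this integral is $\ge \sum_{p_1,p_2}\log p_1\log p_2\,\hat K(p_1^c-p_2^c)\ge\sum_p(\log p)^2 \gg X\log X$; the diagonal $p_1=p_2$ alone already dominates your claimed bound by a factor $X^{c-1+c\eta}(\log X)^2$, since $c>1$. (This matches Lemma \ref{lem12}(i): $\int_B^{2B}|V|^2\ll XB+X^{2-c}\mc L$, and as $B$ runs up to the cutoff the $XB$ term, not the $X^{2-c}\mc L$ off-diagonal term, governs the total.) Once you use the correct mean value $\asymp X^{1-c\eta}\log X$, the crude factorization $(\sup_{\mf m}|f|^2)(\sup|K|)\int|f|^2|K|$ with any minor-arc supremum $\sup_{\mf m}|f|\ll X^{1-\delta}$ gives an exceptional set of measure $\ll X^{2c-1-2\delta+o(1)}$, which is $o(V)=o(X^c)$ only when $c<1+2\delta$. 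With the pointwise bound the paper actually proves, $S_1(x)\ll X^{(7-c)/6-5\eta}$, this is roughly $c<1+(c-1)/3$, i.e.\ $c<1$ --- so the scheme does not even get off the ground, let alone reach $39/29$. And your pointwise target $X\exp(-C(\log X)^{1/4})$ is both weaker than what the cited exponential-sum estimates give (they yield a power saving) and is the kind of saving that belongs to the major arc (via Siegel--Walfisz, as in Lemma \ref{lem23}); it cannot substitute for a power saving in the $L^2$ step.

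What the paper actually does is structurally different on the minor arc, and this is where you are missing the key idea. The minor arc $(\tau,K]$ is split at $X^{1-c}$. On the short piece $[\tau,X^{1-c}]$ the mean value $\int_B^{2B}|S_1|^2$ is governed by the off-diagonal term $X^{2-c}\mc L$ (since $B\le X^{1-c}$ makes $XB\ll X^{2-c}$), and the crude sup-times-$L^2$ argument of Section \ref{sec:minorarcs} suffices. On the long piece $[X^{1-c},K]$ the $L^2$ mean value is genuinely $\asymp X$, so a new idea is needed: Cai's trick, abstracted in Lemma \ref{lem13}. There one applies duality/Cauchy--Schwarz to $\left(\int S_1(x)G(x)\,dx\right)^2$ to extract a factor $\int\!\!\int A(x-y)\,G(x)\,\overline{G(y)}\,dx\,dy$, where $A$ is the \emph{unweighted} exponential sum over integers. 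The key estimate $A(x)\ll X^{2-c-12\eta}+\mc L X^{1-c}|x|^{-1}$ --- a genuine power saving, proved via the $B$-process followed by Heath-Brown's Lemma \ref{lem8} with $k=5$, which is precisely where $c/(c-1)\in\{2,3,4\}$ (i.e.\ $c=4/3$) must be excluded --- is what replaces the wasted diagonal factor $X$ by $X^{2-c}$. That gives $\int_{X^{1-c}}^K|S_1|^4\,dx\ll X^{4-c-3\eta}$, and only then does the Plancherel argument produce the stated exceptional-set bound. Without Lemma \ref{lem13} (or some equivalent mechanism to beat the diagonal in the fourth moment on the long piece of the minor arc), the bound $c<39/29$ is out of reach.
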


(We denote by $C$ a positive absolute constant, not the same at each occurrence.)

Previous upper bounds for permissible $c$:
 \bigskip
 
 \centerline{17/16 \cite{lap},\ 15/14 \cite{lap2},\  
 43/36 \cite{caozhai4},\ 59/44 = 1.3409\ldots \cite{caili}.}
 \bigskip
 
 \begin{thm}\label{thm2}
Let $R$ be large. Suppose that $c < 6/5$. Then $(3)_3$ holds. 
 \end{thm}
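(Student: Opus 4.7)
The plan is to apply the Davenport--Heilbronn method combined with Harman's alternative sieve and the vector sieve. Set
\[
f(\a) = \sum_{X/8 < p \le X} (\log p)\, e(\a p^c),
\]
and let $K$ be the non-negative Fej\'er kernel with Fourier transform $\widehat K(t) = \max(1 - |t|R^\eta,\, 0)$, so that
\[
\int_{\mb R} f(\a)^3 K(\a) e(-\a R)\, d\a = \sum_{p_1,p_2,p_3} \Bigl(\prod_{j=1}^{3} \log p_j\Bigr) \max\bigl(1 - |p_1^c + p_2^c + p_3^c - R| R^\eta,\, 0\bigr).
\]
Hence a lower bound of order $R^{3/c-1-\eta}$ for this integral implies $(3)_3$ up to the $(\log R)^3$ factor. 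Dissect $\mb R$ into the major arc $\mf M = \{|\a| \le \tau\}$, the minor arc $\mf m = \{\tau < |\a| \le T\}$, and the trivial arc $\mf t = \{|\a| > T\}$, for suitably chosen $\tau$ and $T$ depending on the exponent-pair bound used below.

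On $\mf M$, partial summation and the Prime Number Theorem give $f(\a) = J(\a) + O(X(\log X)^{-A})$, where $J(\a) = \int_{X/8}^{X} e(\a t^c)\,dt$, and Fourier inversion yields a main term $\int_\mf M J(\a)^3 K(\a) e(-\a R)\,d\a \gg R^{3/c-1-\eta}$. The trivial arc is negligible because $K(\a) \ll R^\eta |\a|^{-2}$. On $\mf m$ we use
\[
\biggl|\int_\mf m f(\a)^3 K(\a) e(-\a R)\, d\a\biggr| \le \Bigl(\sup_{\a \in \mf m} |f(\a)|\Bigr) \int_\mb R |f(\a)|^2 K(\a)\, d\a.
\]
Since $c > 1$, distinct primes $p_1, p_2 \in (X/8, X]$ satisfy $|p_1^c - p_2^c| \gg X^{c-1} \gg R^{-\eta}$, so only the diagonal contributes to the $L^2$ integral, giving the bound $O(X \log X)$. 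It therefore suffices to establish the pointwise estimate $\sup_{\a \in \mf m} |f(\a)| \ll X^{2-c} R^{-\eta}(\log X)^{-B}$ for a sufficiently large absolute constant $B$.

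For this pointwise bound I would combine Harman's alternative sieve with the vector-sieve inequality
\[
\rho_1 \rho_2 \rho_3 \ge \rho_1^- \rho_2^+ \rho_3^+ + \rho_1^+ \rho_2^- \rho_3^+ + \rho_1^+ \rho_2^+ \rho_3^- - 2\,\rho_1^+ \rho_2^+ \rho_3^+,
\]
valid when $0 \le \rho_j^- \le \rho_j \le \rho_j^+ \le 1$ and $\rho_j \in \{0,1\}$. Harman's decomposition reduces the problem to estimating Type I sums $\sum_{n \sim N} e(\a n^c)$ and Type II sums $\sum_{m \sim M,\, n \sim N} b_m d_n e(\a (mn)^c)$ with $MN \asymp X$, which are controlled by the exponential-sum machinery of Huxley, Bourgain, and Heath-Brown cited in the introduction. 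The main obstacle will be calibrating the admissible Type II range against these exponent-pair bounds so that no unreachable bilinear strip remains for $c$ just below $6/5$. The vector sieve is essential here: a direct application of the Harman sieve to a single prime variable leaves a Type II gap, and the vector-sieve identity distributes the sieve losses across all three variables to produce the slack needed to close it. Verifying that the Bourgain--Huxley exponent-pair bound delivers sufficient saving on the critical Type II strip up to $c = 6/5$ is the delicate calculation that fixes the theorem's threshold.
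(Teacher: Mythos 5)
Your overall framework is the right one: Davenport--Heilbronn with a Fej\'er-type kernel, Harman's sieve to produce an upper sieve weight $\rho^+$, and a vector-sieve inequality to combine the sieved sums into a lower bound. But the minor-arc strategy you propose---bounding the integral by $\bigl(\sup_{\mf m}|f|\bigr)\int|f|^2 K$---is the pre-Cai method, and it is too weak to reach $c<6/5$. That route requires a pointwise saving of the form $\sup_{\mf m}|S^+(\a)|\ll X^{2-c-\eta}$, i.e.\ roughly $X^{0.8}$ near $c=6/5$, and neither the Sargos--Wu estimate, nor Bourgain's bound, nor Heath-Brown's $k$-th derivative estimate deliver Type~II savings anywhere near that across the whole dyadic range. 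This is exactly why the earlier literature was stuck around $c<10/9$ when using the $\sup\times L^2$ bound.

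The paper's proof instead hinges on Lemma~\ref{lem13}, an abstraction of Cai's idea: the minor-arc integral $\int_{X^{1-c}}^{K} S(x)G(x)\,dx$ (with $G$ of the form $S^+S\Phi e(-Rx)$ or $(S^+)^2\Phi e(-Rx)$) is squared and estimated via Cauchy--Schwarz against the kernel $A(x-y)$, so that the saving is shared between an $L^2$ bound on $G$ and a mean-value bound on $A$. This replaces the required pointwise estimate by the much more accessible $\sup|S^+(x)|\ll X^{(3-c)/2-4\eta}\approx X^{0.9}$ (see (5.4)), together with the $A(x)$ bound (5.3) obtained by the $B$ process. The entire sieve construction in Section~\ref{sec:minorarcthm3}---the choice of intervals $I_1,\dots,I_5$, the Type~II ranges $X^{1/5}\ll N\ll X^{29/105}$ and $X^{1/3}\ll N\ll X^{11/25}$, the Type~I bound for $M\ll X^{0.44}$---is calibrated precisely to $X^{(3-c)/2}$, not $X^{2-c}$. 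Without Lemma~\ref{lem13} you would need to close a much larger bilinear gap, and no choice of exponent pair does it for $c$ near $6/5$.

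Two smaller points. First, the paper uses the simpler two-variable form of the vector sieve (equation (1.3), with $\rho^-=\rho$), applied to only two of the three prime factors; your three-variable inequality with independent $\rho_j^\pm$ is not wrong but is more complicated than necessary, and introducing a genuine $\rho^-<\rho$ would only shrink the eventual main-term constant. Second, you cannot aim to prove a pointwise estimate for $f$ itself on the minor arc: $f$ is not dominated by a sieve decomposition with good exponential-sum behaviour, which is the whole reason the sieve replaces $S$ by $S^+$ in some slots only. Finally, the major arc is not a formality here: the lower bound depends on showing $2u^+-(u^+)^2>0$, i.e.\ that the sieve constant $u^+$ lies in $(0,2)$, which in the paper requires computing the integrals $d_1,\dots,d_4$ and verifying $u^+<1.6$. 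Your proposal should at least flag this check.
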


Previous upper bounds:
 \bigskip
 
 \centerline{15/14 \cite{lap}, \ 13/12 \cite{cai}, \
 11/10 \cite{cai2, kumned}, \ 237/214 \cite{caozhai},}
  \bigskip
  
 \centerline{$\dfrac{61}{55}$ \cite{kum}, \ $\dfrac{10}9$ \cite{bakwein2},
 \ $\dfrac{43}{36} = 1.1944\ldots$ \cite{cai3}.}
 \bigskip
 
 \begin{thm}\label{thm3}
Let $R$ be large. Suppose that $c < 39/29$. Then $(3)_4$ holds.
 \end{thm}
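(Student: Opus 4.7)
The plan is to apply the Davenport-Heilbronn method. Writing
\[S(\a) = \sum_{X/8 < p \le X} (\log p)\, e(p^c \a),\]
I would choose a standard non-negative kernel $K(\a)$ whose Fourier transform is supported in $[-R^{-\eta},R^{-\eta}]$ and which satisfies $|K(\a)| \ll \min(R^{-\eta},\,R^{\eta}\a^{-2})$, so that
\[(\log X)^4\, \mc A_4(R) \gg \int_{\mb R} S(\a)^4 K(\a)\, e(-R\a)\, d\a\]
up to the usual minor adjustments. I would dissect $\mb R$ into a major arc $\mf M = \{|\a| \le \tau\}$ with $\tau = X^{-c}(\log X)^A$, minor arcs $\mf m = \{\tau < |\a| \le T\}$ with $T$ a large fixed power of $X$, and a tail $\mf t = \{|\a| > T\}$. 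The major-arc integral is evaluated in the standard way, by substituting the prime-number-theorem approximation for $S$ and computing the singular integral, giving the expected main term of order $R^{4/c-1-\eta}$; the tail is absorbed by the $\a^{-2}$ decay of $K$ together with the trivial bound $|S(\a)| \le X \log X$.

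Everything hinges on the minor arcs. The strategy is the classical
\[\int_{\mf m} |S(\a)|^4 |K(\a)|\, d\a \le \bigl(\sup_{\mf m}|S|\bigr)^{2} \int_{\mb R} |S(\a)|^{2} |K(\a)|\, d\a,\]
combined with a Parseval-type estimate for the mean square, which counts pairs $(p_1,p_2)$ with $|p_1^{c}-p_2^{c}|$ small and gains a factor of roughly $R^{-\eta}$ over the trivial bound. For this product to dominate the main term I need a uniform pointwise estimate $\sup_{\a \in \mf m} |S(\a)| \ll X^{1-\s}$ with some explicit $\s > 0$ that survives for all $c < 39/29$.

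Producing this minor-arc bound under the hypothesis $c < 39/29$ is the main obstacle. Following the introduction, I would apply Heath-Brown's generalized Vaughan identity to decompose $S(\a)$, modulo $O((\log X)^C)$ pieces, into Type I sums of the shape $\sum_{m\sim M} a_m \sum_{n\sim N} e((mn)^c\a)$ with $N$ large, and Type II sums with $M,N$ in a prescribed range. Type I sums succumb to van der Corput summation in the $n$-variable; the Type II sums are the delicate part, where I would appeal to the exponential-sum estimates of Huxley \cite{hux}, Bourgain \cite{bou}, and Heath-Brown \cite{hb3}, arranged so that every admissible splitting $MN \asymp X$ delivers a uniform power saving throughout $\mf m$. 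The numerical threshold $39/29$ arises exactly at the point where these bounds, used in their optimal ranges, cease to yield $\s > 0$; matching the Heath-Brown identity cut-offs against the available exponent-pair inequalities so that no gap remains across all admissible $(M,N)$ is the technical heart of the argument. Combining the three contributions then yields $(3)_4$ for $c < 39/29$.
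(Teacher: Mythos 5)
Your major-arc and tail treatment is fine, and your plan to obtain a pointwise bound on the minor arcs via Heath-Brown's identity together with the Huxley/Bourgain/Heath-Brown exponential-sum estimates matches what the paper does to prove \eqref{eq4.3}. But the minor-arc strategy itself — the estimate
\[\int_{\mf m} |S(\a)|^4 |K(\a)|\, d\a \le \bigl(\sup_{\mf m}|S|\bigr)^{2} \int |S(\a)|^{2} |K(\a)|\, d\a\]
— does not close for $c$ anywhere near $39/29$, and that is the genuine gap. The mean square over $\tau<|\a|\le K$ is $\gg X$ (it is dominated by the diagonal, since $K=X^{2\eta}>1$), so after inserting $|K|\ll R^{-\eta}$ this step forces you to prove $\sup_{\mf m}|S|\ll X^{(3-c)/2-\e}$. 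For $c$ near $39/29$ that is $\approx X^{0.828}$. The best pointwise bound that the Type I/Type II machinery actually yields here (the paper's \eqref{eq4.3}) is $X^{(7-c)/6-5\eta}\approx X^{0.943}$, which is far short; with that input, your inequality would only succeed for $c\le 1$. The threshold $39/29$ does not arise from where the pointwise bound on $S$ degenerates, as you suggest at the end.

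What is missing is exactly the innovation the paper attributes to Cai and abstracts as Lemma~\ref{lem13}. Writing $\int|S_1|^4=\int S_1\cdot G$ with $G=|S_1|^2\bar S_1$, one Cauchy--Schwarzes in the $n$-variable rather than in $x$, producing the kernel $\mc J(x-y)=A(x-y)=\sum_n e(n^c(x-y))$ --- the \emph{unweighted} exponential sum --- sandwiched between two copies of $G$. The unweighted sum is far easier to control: the paper proves $A(x)\ll U+ \mc L X^{1-c}|x|^{-1}$ with $U=X^{2-c-12\eta}$ (equation \eqref{eq4.2}, via the $B$-process and Lemma~\ref{lem8}), which is overwhelmingly smaller than any bound one could hope for on $S_1$ itself. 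Feeding this plus the comparatively weak pointwise bound \eqref{eq4.3} into Lemma~\ref{lem13}(ii) gives $\int_{X^{1-c}}^{K}|S_1|^4\,dx\ll X^{4-c-3\eta}$, which is what the theorem needs. The constraint $c<39/29$ comes from the verification of \eqref{eq4.2} (e.g.\ $(9c+1)/20<2-c$ reduces to $29c<39$), i.e.\ from the analysis of $A(x)$, not of $S(x)$. Without Lemma~\ref{lem13} or an equivalent device that exploits $A(x)$ rather than $S(x)$ in the off-diagonal, your argument cannot reach the claimed range.
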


Previous upper bounds:
 \bigskip

\centerline{$\dfrac{97}{81}$ \cite{liushi}, \ $\dfrac 65$ \cite{lizha3}, \ $\dfrac{59}{44}$ \cite{caili}, \ $\dfrac{1198}{889} = 1.3419\ldots$ \cite{lizha5}.}
 \bigskip

 \begin{thm}\label{thm4}
Let $R$ be large. Suppose that $c < \frac{378}{181} = 2.0883\ldots$. Then $(3)_5$ holds.
 \end{thm}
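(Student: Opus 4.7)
The plan is to apply the Davenport-Heilbronn circle method adapted to inequalities, together with a vector sieve and the Harman sieve. Set $X = R^{1/c}$ and define
\[
S(\alpha) = \sum_{X/8 < p \le X}(\log p)\, e(\alpha p^c),
\]
and let $K(\alpha)$ be a Davenport-Heilbronn kernel whose Fourier transform essentially captures the indicator of $[-R^{-\eta}, R^{-\eta}]$. A logarithmically weighted version of $\mc A_5(R)$ is then bounded below by
\[
J_5(R) = \int_{\mb R} S(\alpha)^5 K(\alpha) e(-\alpha R)\, d\alpha.
\]
Split $\mb R = \mf M \cup \mf m \cup \mf t$ into a major arc $\mf M = \{|\alpha| \le \tau\}$ for a small parameter $\tau$, a minor arc $\mf m = \{\tau < |\alpha| \le T\}$, and a trivial tail $\mf t = \{|\alpha| > T\}$. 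On $\mf M$, replace $S(\alpha)$ by the smoothed approximation $I(\alpha) = \int_{X/8}^X e(\alpha t^c)\, dt/\log t$ and compute a main term of order $R^{5/c - 1 - \eta}(\log R)^{-5}$. On $\mf t$, the decay $K(\alpha) \ll R^\eta \alpha^{-2}$ combined with the trivial bound $|S(\alpha)| \le X$ makes the contribution negligible for $T$ a suitable power of $X$.

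The crux is the minor arc. Following the innovation of Cai \cite{cai3} discussed in the introduction, I would apply a vector sieve to replace the five prime indicators by a product of sieve minorants, with the error from the vector sieve inequality absorbed by pairing with suitable majorants. The Harman sieve then decomposes the resulting sums into Type I pieces (with one short smooth factor) and Type II pieces (with two balanced factors), leaving the task of bounding bilinear exponential sums against $e(\alpha(mn)^c)$. The Type II sums are controlled by invoking the pointwise exponential sum bounds of Huxley \cite{hux}, Bourgain \cite{bou}, and Heath-Brown \cite{hb3}, each of which is sharpest in a different regime of $|\alpha|$; piecing these together yields a uniform bound $|S(\alpha)| \ll X^{1-\delta(c)}$ on $\mf m$. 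Using
\[
\int_{\mf m} |S(\alpha)|^5 K(\alpha)\, d\alpha \le \Bigl(\sup_{\alpha \in \mf m}|S(\alpha)|\Bigr) \int_{\mf m}|S(\alpha)|^4 K(\alpha)\, d\alpha,
\]
and handling the fourth-moment integral by Parseval combined with a divisor-type count for pairs $(p_1, p_2, p_3, p_4)$ with $|p_1^c + p_2^c - p_3^c - p_4^c|$ small, one shows the minor arc contribution is $o(R^{5/c - 1 - \eta})$ exactly when $c < 378/181$.

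The main obstacle is the tight balancing of parameters. The lengths of the Type I and Type II variables in the Harman sieve decomposition must be tuned against the ranges of $|\alpha|$ on which each of the Huxley, Bourgain, and Heath-Brown bounds is optimal, and simultaneously against the vector sieve loss factor. The precise numerical boundary $c < 378/181 = 2.0883\ldots$ is forced by the worst case in this optimization --- the regime of $\alpha$ where the weakest of the three available Type II bounds is still barely strong enough to beat the target $R^{5/c - 1 - \eta}$. A secondary technical point is to verify that the constant in the major arc main term strictly exceeds the constant lost through the vector sieve, so that the lower bound $(3)_5$ genuinely survives after all error contributions are subtracted.
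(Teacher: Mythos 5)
Your high-level architecture (Davenport--Heilbronn kernel, vector sieve to build $\rho^+$, Harman sieve for Type I/Type II decomposition, Huxley/Bourgain/Heath-Brown on the minor arc) matches the scaffolding of the paper, but your minor-arc treatment omits the single idea that actually produces the exponent $378/181$, and as written it would not reach it.

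The decisive step in the paper is \emph{not} the vector sieve (which you identify as Cai's innovation) but Lemma~\ref{lem13}, the Cauchy--Schwarz/dual-measure device abstracted from Cai's paper. After the vector sieve reduces the problem to bounding $\int S(x)G(x)\,dx$ with $G \in \{S^3S^+\Phi e(-Rx),\ S^2(S^+)^2\Phi e(-Rx)\}$, the paper does \emph{not} take a sup-norm of one factor times an $L^4$-moment of the rest, which is what you propose with
$\int_{\mf m}|S|^5K \le (\sup_{\mf m}|S|)\int_{\mf m}|S|^4K$. Instead Lemma~\ref{lem13}(ii) trades $\int S\,G$ for two pieces: one weighted by $\int |G|^2$ and a Weyl-sum savings $X^{2-c}$, and another weighted by $(\int|G|)^2$ and a quantity $U$ controlling the \emph{unweighted} complete sum $A(x)=\sum_{X/8<n\le X}e(n^cx)$. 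The whole point of Cai's trick is that the minor-arc estimate hinges on a pointwise bound for $A(x)$ --- a sum with no prime weights, no sieve, no Type~I/II --- and this is exactly where Bourgain's exponent bound $N^{1/2}T^{13/84+\eta}$ is applied. The constraint $c<378/181$ is literally the inequality $13c/84 + 1/2 < 5-2c$ (equivalently $181c<378$) ensuring $A(x)\ll X^{5-2c-14\eta}$; it is not produced by a Type~II bound on $S$, which in the paper is the comparatively crude $\|S\|_\infty\ll X^{79/80+2\eta}$.

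Consequently your proposed chain $\sup |S| \times (\text{fourth moment via Parseval/Robert--Sargos})$ is the pre-Cai strategy and, with the sup bounds actually available (roughly $X^{79/80}$ for $S$, $X^{0.968}$ for $S^+$), would land in the neighborhood of the previous record $c\approx 2.08$, not $378/181$. To close the gap you need to (a) formulate and apply the analogue of Lemma~\ref{lem13}(ii) with $\mc S=S$, $\mc J=A$, $G$ as above; (b) prove the Bourgain-driven bound on $A(x)$ that gives the stated $U$; and (c) supply the independent sup bounds on $S$ and $S^+$ (via the Heath-Brown identity, Harman sieve, and Heath-Brown's $k$-th derivative estimate with $k=5$) that control $\int|G|^2$. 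Only then does the balancing you describe actually terminate at $c<378/181$.
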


Previous upper bounds:
 \bigskip

\centerline{$1.584\ldots$ \cite{caozhai2}, \ $\dfrac{1+\sqrt 5}2$ \cite{gar},\ $\dfrac{81}{40}$ \cite{caozhai3},\ $\dfrac{108}{53}$ \cite{liushi}, 2.041 \cite{bakwein},}
 \bigskip

\centerline{2.08 \cite{caili}, \ $\dfrac{665576}{319965} = 2.0801\ldots$ \cite{lizha2}.}
 \bigskip

 \begin{thm}\label{thm5}
Let $n$ be large. Suppose that $c < \frac{3581}{3106} = 1.1529\ldots$. Then $(4)_3$ holds.
 \end{thm}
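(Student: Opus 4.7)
The plan is to apply the Hardy--Littlewood circle method to the equation $[p_1^c]+[p_2^c]+[p_3^c]=r$. Put $X = r^{1/c}$ and
\[T(\alpha) = \sum_{X/8 < p \le X}(\log p)\, e\bigl(\alpha[p^c]\bigr).\]
Since $[p^c]\in\mb Z$, orthogonality gives
\[\sum_{\substack{X/8<p_j\le X \\ [p_1^c]+[p_2^c]+[p_3^c]=r}}\prod_{j=1}^{3}\log p_j \;=\; \int_{-1/2}^{1/2}T(\alpha)^3 e(-\alpha r)\,d\alpha,\]
which is comparable to $(\log X)^3\mc B_3(r)$. I split the interval at $|\alpha|=\tau$ into a major arc $\mathfrak M$ and minor arcs $\mathfrak m$, with $\tau$ a small negative power of $X$.

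On $\mathfrak M$, I would use the Fourier expansion $e(-\alpha\{y\}) = \sum_k c_k(\alpha)\,e(ky)$, with $c_k(\alpha) = (1-e(-\alpha))/(2\pi i(k+\alpha))$, so that $c_0(\alpha)=1+O(|\alpha|)$ and $|c_k(\alpha)|\ll |\alpha|/|k|$ for $k\ne 0$. This writes $T(\alpha)=\sum_k c_k(\alpha)\tilde T(\alpha+k)$ with $\tilde T(\beta)=\sum_p(\log p)\,e(\beta p^c)$. On $\mathfrak M$ only the $k=0$ term contributes to the main term, and partial summation combined with the Prime Number Theorem produces the expected main contribution of order $X^{3-c}$, corresponding to $(4)_3$.

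On $\mathfrak m$ I use the standard reduction
\[\int_{\mathfrak m}|T|^3\,d\alpha \le \Bigl(\sup_{\mathfrak m}|T(\alpha)|\Bigr)\int_0^1 |T(\alpha)|^2\,d\alpha,\]
together with the Parseval bound $\int_0^1|T|^2\,d\alpha\ll X\log X$ (valid since the integers $[p^c]$ are distinct for $c>1$ and $X$ large, by the mean value theorem applied to $t^c$). It then suffices to prove $|T(\alpha)|\ll X^{2-c}(\log X)^{-A}$ uniformly on $\mathfrak m$. Applying the Fourier expansion a second time, truncated at $|k|\le K$ for a suitable power $K$ of $X$, this reduces to a uniform bound on $\tilde T(\beta)$ for $\beta$ of the form $\alpha+k$. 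Since $(2)_3$ is one of the simpler cases noted in the introduction, I would apply Heath--Brown's generalised Vaughan identity to decompose $\tilde T(\beta)$ into Type I sums $\sum_{m\sim M}a_m\sum_n e(\beta(mn)^c)$ with $M$ small and Type II sums $\sum_{m\sim M}\sum_{n\sim N}a_m b_n\,e(\beta(mn)^c)$ with $M,N$ in a balanced middle range, and bound each using the exponential sum estimates of Huxley \cite{hux}, Bourgain \cite{bou}, and Heath--Brown \cite{hb3}.

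The main obstacle will be the Type II analysis: one must check that for every splitting $MN\asymp X$ arising in the identity, at least one of the three exponential sum bounds yields the required saving of $X^{c-1+\delta}$ over the trivial estimate $\tilde T(\beta)\ll X$. The threshold $c<3581/3106$ should emerge as the precise outcome of optimising the level parameter in Heath--Brown's identity together with the admissible Type I and Type II ranges dictated by these exponential sum estimates; any sharpening of the underlying bounds would immediately widen the permissible interval for~$c$.
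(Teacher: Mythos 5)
Your setup and major-arc outline are fine, but the minor-arc scheme you propose cannot succeed and misses the key innovation of the paper. You bound
\[
\int_{\mathfrak m}|T|^3\,d\alpha \le \Bigl(\sup_{\mathfrak m}|T|\Bigr)\int_0^1|T|^2\,d\alpha \ll \Bigl(\sup_{\mathfrak m}|T|\Bigr)\,X\log X,
\]
and so to beat the main term $\asymp X^{3-c}(\log X)^{-3}$ you would need the pointwise estimate $\sup_{\mathfrak m}|T(\alpha)|\ll X^{2-c}(\log X)^{-A}$. For $c$ near $3581/3106\approx 1.1529$ that means $|T(\alpha)|\ll X^{0.847}$. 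No combination of the Huxley, Bourgain, or Heath-Brown exponential-sum bounds, fed through Heath-Brown's identity or the Harman sieve, comes close to this: the paper proves (and only needs to prove) the much weaker bound $T(x)\ll X^{(3-c)/2-5\eta}\approx X^{0.924}$ (its estimate (6.3)). The classical $\sup\times L^2$ argument would therefore pin you to a range of $c$ far narrower than what Theorem~5 claims; this is precisely why the older results ($17/16$, $12/11$, etc.) were so much weaker.

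The ingredient you are missing is the Cai-type large-sieve trick encapsulated in the paper's Lemma~13. Rather than bounding $T$ in $L^\infty$ against $T$ in $L^2$, that lemma compares the integral $\int T\,G$ against a weighted $L^2$-norm of $G$ using the \emph{unweighted} sum $B(x)=\sum_{X/8<n\le X}e([n^c]x)$ as a kernel. The upshot is that one needs two separate inputs: (a) a strong bound on $B(x)$ of the form $B(x)\ll X^{3-2c-15\eta}+\mathcal{L}X^{1-c}|x|^{-1}$ (the paper's (6.2)), which encodes the spectral distribution of the sequence $[n^c]$; and (b) only the mediocre bound $T(x)\ll X^{(3-c)/2-5\eta}$ on the prime sum. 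Moreover, the threshold $c<3581/3106$ does \emph{not} emerge from the Type~I/II optimisation for $T$, as you suggest; it comes from (a), specifically from Huxley's bound (Lemma~6) applied after the Fourier expansion of Lemma~1, the $B$-process and the $A$-process, in the inequalities (6.9)--(6.11). Without Lemma~13 to convert the problem into one about $B(x)$, the exponential-sum technology you list simply has nowhere to gain the required saving.
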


Previous upper bounds:
 \bigskip

\centerline{$\dfrac{17}{16}$ \cite{laptol}, $\dfrac{12}{11}$ \cite{kumned}, $\dfrac{258}{235}$ \cite{caozhai}, $\dfrac{137}{119}$ \cite{cai4}, $\dfrac{3113}{2703} = 1.1516\ldots$ \cite{lizha4}.}
 \bigskip

 \begin{thm}\label{thm6}
Let $n$ be large. Suppose that $c < \frac{609}{293} = 2.0784\ldots$. Then $(4)_5$ holds.
 \end{thm}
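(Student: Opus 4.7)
The plan is to adapt the Davenport--Heilbronn circle method treatment of Theorem~\ref{thm4} to the integer-part equation. By orthogonality,
\[\mc B_5(r) = \int_0^1 F(\theta)^5 e(-r\theta) \, d\theta, \qquad F(\theta) = \sum_{X/8 < p \le X} e(\theta [p^c]).\]
Writing $[p^c] = p^c - \{p^c\}$ and expanding the $1$-periodic function $y \mapsto e(-\theta\{y\})$ as a Fourier series (truncated via a Vaaler-type device to control convergence), one expresses $F(\theta)$ as a weighted sum of the exponential sums $S(k+\theta) = \sum_{X/8 < p \le X} e((k+\theta) p^c)$ for $k \in \mb Z$, with weights of size $O(\min(1,|\theta|)/(|k|+1))$. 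This reduces the analysis of $F$ to uniform estimates for $S(\alpha)$ together with a manageable tail summation over $k$.

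Partition $[0,1]$ into a major arc $\mf M$ about $\theta = 0$ and the minor arc $\mf m = [0,1] \setminus \mf M$. On $\mf M$, standard computations give the expected main term of order $r^{5/c - 1}/(\log r)^5$. On $\mf m$, I would estimate
\[\Bigl|\int_{\mf m} F(\theta)^5 e(-r\theta) \, d\theta\Bigr| \le \Bigl(\sup_{\theta \in \mf m} |F(\theta)|\Bigr) \int_0^1 |F(\theta)|^4 \, d\theta,\]
where the fourth moment counts tuples with $[p_1^c] + [p_2^c] = [p_3^c] + [p_4^c]$ and is of the expected size $O(X^{4-c}(\log X)^B)$, handled by the same analysis used for Theorem~\ref{thm4}. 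Everything thus reduces to a minor-arc supremum bound $\sup_{\mf m} |F(\theta)| \ll X/(\log X)^A$ for sufficiently large $A$, which via the Fourier expansion above follows from analogous bounds for $S(\alpha)$ over the relevant ranges of $\alpha$.

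Since $s = 5$ is odd, no clean Parseval squaring is available, and following the route signalled in the introduction one combines Harman's alternative sieve with a vector sieve to isolate primes. Heath-Brown's generalised Vaughan-type identity then decomposes the prime-detecting sums into Type~I pieces $\sum_{m \sim M} a_m \sum_{n \sim N} e(\alpha (mn)^c)$ and Type~II pieces $\sum_{m \sim M} \sum_{n \sim N} a_m b_n e(\alpha (mn)^c)$ with $MN \asymp X$. The Type~I sums are bounded using Huxley's estimate for $\sum_{n \sim N} e(\alpha n^c)$, and the Type~II sums via the exponential sum bounds of Bourgain and Heath-Brown. A careful choice of sieve thresholds, designed to cover every Type~I/II range reachable by these tools, yields the constraint $c < 609/293$.

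The main obstacle is precisely this parameter optimisation: one has to balance the Harman/vector-sieve threshold against the $(M,N)$ regions where the available Type~I and Type~II exponential sum estimates succeed, while simultaneously absorbing the additional cost contributed by the Fourier tails in the integer-part expansion of $F$. The marginal degradation from $378/181$ (Theorem~\ref{thm4}) to $609/293$ reflects exactly this extra cost, the integer-part equation being a touch more delicate than the corresponding inequality near and between the major arcs.
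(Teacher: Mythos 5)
Your outline misses the paper's central mechanism and, as a consequence, the minor-arc estimate you propose fails for $c$ near $2$. You suggest bounding the minor-arc integral by
$\bigl(\sup_{\mathfrak m}|F|\bigr)\int_0^1|F|^4\,d\theta$
and claim the fourth moment is of the expected size $O(X^{4-c}(\log X)^B)$. For $c>2$ this is false: $\int_0^1|F(\theta)|^4\,d\theta$ counts solutions of $[p_1^c]+[p_2^c]=[p_3^c]+[p_4^c]$ with $p_j\in(X/8,X]$, and the diagonal $\{p_1,p_2\}=\{p_3,p_4\}$ alone contributes $\gg X^2/(\log X)^2$, which dominates $X^{4-c}$ once $c>2$. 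Thus $\int_0^1|F|^4\asymp X^2/(\log X)^2$, and to obtain a minor-arc bound $o(X^{5-c})$ you would need $\sup_{\mathfrak m}|F|\ll X^{3-c-\varepsilon}$. With $c$ near $609/293\approx 2.078$, that means a sup bound around $X^{0.92}$; but the best such bound the available exponential-sum machinery yields for a prime sum (or a sieved sum) over the full range $[X^{1-c},\tfrac12]$ is of order $X^{79/80}\approx X^{0.9875}$ (cf.\ \eqref{eq8.3}, \eqref{eq8.4}), far too weak. So the plain $\sup\cdot L^4$ factorization cannot give anything near the claimed exponent.

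What the paper does instead is precisely the ``Cai'' device abstracted in Lemma~\ref{lem13}: write the minor-arc integral as $\int T(x)G(x)\,dx$ and bound its square by $\mathcal L^4 X^{2-c}\int|G|^2 + U X\mathcal L^2(\int|G|)^2$, where $U$ is a pointwise bound for the unweighted sum $B(x)=\sum e([n^c]x)$ (proved in \eqref{eq8.2} via Lemma~\ref{lem1}, the $B$- and $A$-processes, and Huxley's Lemma~\ref{lem6}), and $\int|G|^2$, $\int|G|$ are controlled by the moment estimates of Lemma~\ref{lem12}(ii) together with the sup bounds \eqref{eq8.3}, \eqref{eq8.4}. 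It is the $X^{2-c}$ factor in the first term — rather than a sup over $T$ — that rescues the argument for $c>2$; the entire gain over earlier results comes from this, combined with Lemma~\ref{lem6}/\eqref{eq8.7}--\eqref{eq8.8} which is what actually determines $c<609/293$. You should also make the sieve step precise: the paper does not apply a sieve to all five variables but uses the vector-sieve inequality \eqref{eq1.3} on just two of them, giving exactly the combination $\int_{-1/2}^{1/2}\bigl(2T(x)^4T^+(x)-T(x)^3T^+(x)^2\bigr)e(-rx)\,dx$ in \eqref{eq1.7}, with $T^+$ built from Buchstab iterations and controlled via Lemmas~\ref{lem15}, \ref{lem16}.

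Finally, your remark that the drop from $378/181$ to $609/293$ comes from the ``Fourier tails'' of the integer-part expansion points in roughly the right direction, but the loss is concentrated in a specific place: in the equation case one must bound $B(x)$ uniformly for $x$ up to $\tfrac12$ rather than up to $K=X^{2\eta}$, which forces the truncation parameter $H=X^{2c-4+30\eta}$ in Lemma~\ref{lem1} to be a genuine power of $X$, and \eqref{eq8.7}--\eqref{eq8.8} (Huxley's Lemma~\ref{lem6} applied to the resulting family of sums over $h\sim H_1$) are what pin down the exponent.
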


Previous upper bounds:
 \bigskip

\centerline{$\dfrac{4109054}{1999527}$ \cite{lizha}, $\dfrac{408}{197} = 2.071\ldots$ \cite{li}.}
 \bigskip

Along usual lines, we employ a continuous function $\phi : \mb R \to [0,1]$ such that
 \begin{equation}\label{eq1.1}
\phi(y) = 0 \quad (|y| \le R^{-\eta}), \ \phi(y) = 1 \quad \left(|y| \le \frac{4R^{-\eta}}5\right),
 \end{equation}
with Fourier transform
 \[\Phi(x) := \int_{-\infty}^\infty e(-xy) \phi(y)dy \ \ ,
 \ \text{where } e(\t) := e^{2\pi i\t},\]
satisfying
 \begin{equation}\label{eq1.2}
\int_{|x|>X^{2\eta}} |\Phi(x)|dx \ \ \ll X^{-3}.
 \end{equation}
We define
 \[\tau = X^{8\eta - c}, K = X^{2\eta}, \mc L = \log X,
 P(z) = \prod_{p<z} p.\]
Let $\rho(n)$ denote the indicator function of the prime numbers. For $u \in \mb N$, $z > 1$, let
 \[\rho(u,z) = 1 \ \text{ if } (u, P(z)) = 1.\]
Let $\rho(u,z)=0$ otherwise. For a vector sieve one usually uses functions $\rho^-(\ldots)$, $\rho^+(\ldots)$ with
 \[\rho^-(n) \le \rho(n) \le \rho^+(n);\]
but (without loss) we shall take $\rho^- = \rho$, so that the inequality basic to \cite{brufou} becomes
 \begin{equation}\label{eq1.3}
\rho(m) \rho(\ell) \ge \rho^+(m) \rho(\ell) + \rho(m) \rho^+(\ell) - \rho^+(m) \rho^+(\ell). 
 \end{equation}
We shall need exponential sums
 \begin{align*}
S(x) &= \sum_{\frac X8 < n\le X} \rho(n) e(n^cx) , S_1(x) = \sum_{\frac X8 <, n\le X} \rho(n) \log n\ e(n^cx),\\[2mm]
&\qquad S^+(x) =\sum_{\frac X8 < n\le X} \rho^+(n) e(n^cx),\\[2mm]
T(x) &= \sum_{\frac X8 < n\le X} \rho(n) e([n^c]x), T_1(x) = \sum_{\frac X8 < n\le X} \rho(n) \log n\ e([n^c]x),\\[2mm]
&\qquad T^+(x) = \sum_{\frac X8 < n\le X} \rho^+(n) e([n^c]x),\\[2mm]
\intertext{and approximating functions}
I(x) &= \int_{X/8}^X e(t^c x) dt,\\[2mm]
J(x) &= \sum_{\left(\frac X8\right)^c < m \le X^c} \ \frac 1c\, m^{\frac 1c -1} e(xm).
 \end{align*}
In using Cai's idea we also need the sums
 \[A(x) = \sum_{\frac X8 < n\le X} e(n^cx) \ , \ B(x) =
 \sum_{\frac X8 < n\le X} e([n^c]x).\]

We now describe briefly the underlying principle of the proofs. For $(3)_2$, $(3)_4$ we use
 \begin{align}
\mc L^s \mc A_s(R) &\gg \sum_{\frac X8 < p_j\le X\, (j=1,\ldots,s)} \log p_1\ldots \log p_s \phi(p_1^c + \cdots + p_s^c - R)\label{eq1.4}\\[2mm]
&= \sum_{\frac X8 < p_j\le X (j=1,\ldots,s)}\log p_1\ldots \log p_s \int_{\mb R} \Phi(x) e((p_1^c + \cdots + p_s^c)x)e(-Rx)dx\notag\\[2mm]
&= \int_{\mb R} S_1(x)^s \Phi(x) e(-Rx)dx.\notag
 \end{align}
For $(4)_3$ we use
 \begin{align}
\mc L^s\mc B_s(r) &\gg \sum_{\frac X8 < p_j\le X\, (j=1,2,3)}\log p_1\ldots \log p_s \int_{-1/2}^{1/2} e(([p_1^c] + [p_2^c] + [p_3^c] - r)x)dx\label{eq1.5}\\[2mm]
&= \int_{-1/2}^{1/2} T_1(x)^3 e(-rx)dx.\notag
 \end{align}
Modifying this for $(3)_3$, $(3)_5$, we use
 \begin{align}
\mc A_s(R) &\ge \sum_{\frac X8 < m_1,\ldots, m_{s-2}, m,\ell \le X} \rho(m_1)\ldots \rho(m_{s-2}).\label{eq1.6}\\[2mm]
&\hskip .75in (\rho^+(m)\rho(\ell) + \rho(m)\rho^+(\ell) - \rho^+(m) \rho^+(\ell)).\notag\\[2mm]
&\hskip .75in \phi(m_1^c + \cdots + m_{s-2}^c + m^c + \ell^c - R)\notag\\[2mm]
&= \int_{-\infty}^\infty S(x)^{m-2}(2S(x)S^+(x) - S^+(x)^2) \Phi(x) e(-Rx)dx\notag
 \end{align}
and for $(4)_5$,

 \begin{align}
\mc B_s(n) &\ge \sum_{\frac X8 < m_1, m_2, m_3, m, n\le X}\rho(m_1) \rho(m_2) \rho(m_3).\label{eq1.7}\\[2mm]
&\hskip .75in (\rho^+(m)\rho(n) + \rho(m)\rho^+(n) - \rho^+(m)\rho^+(\ell)).\notag\\[2mm]
&\hskip .75in \int_{-1/2}^{1/2} e(([m_1^c] + [m_2^c] + [m_3^c] + [m^c] + [\ell^c] - r)x)dx\notag\\[2mm]
&= \int_{-1/2}^{1/2} (2T(x)^4 T^+(x) - T(x)^3 T^+(x)^2) e(-rx)dx.\notag
 \end{align}
Note that the function $\rho^+$ in Theorem \ref{thm3} is different (although we still write $\rho^+$) from the function in Theorems \ref{thm4} and \ref{thm6}.

We give a few more indications of method in the simplest case $(3)_4$. The `major arc' $\mc M$ is $(-\tau,\tau)$, and $\mb R\backslash\mc M$ is the `minor arc'.
 \medskip

(i) Show that the last integral in \eqref{eq1.4} reduces to the corresponding integral over $(-\tau,\tau)$ with acceptable error (the `minor arc' stage).

(ii) Show that the integrand over $(-\tau,\tau)$ can be replaced by $\left(\int_X^{2X}\, \frac{e(t^cx)}{\log t}\, dt\right)^3$. $\Phi(x) e(-Rx)$ with acceptable error (first part of `major arc' stage).

(iii) Extend the integral from $(-\tau,\tau)$ to $\mb R$ with acceptable error and obtain the lower bound $(3)_3$ for this last integral (second part of `major arc' stage).

The other cases \ref{eq3s}, \ref{eq4s} are similar in principle, but more complicated. The innovations are in part (i) in each case.

I would like to thank Andreas Weingartner for computer calculations involving integrals and exponent pairs.
\bigskip

 \section{Lemmas for the minor arc.}\label{sec:lemminorarc}

For $N \ge 1$, we write $I(N)$ for a subinterval of $(N, 2N]$, not the same at each occurrence. For a real function $f$ on $[N, 2N]$ we write
 \[S(f,N) = \sum_{n\,\in\,I(N)} e(f(n)).\]
The fractional part of $x$ is written as $\{x\}$. We write $A\asymp B$ for $A\ll B\ll A$. Implied constants in the conclusions of the lemmas depend on $c$, $\eta$ if these appear, together with any implied constants in the hypotheses.

 \begin{lem}\label{lem1}
Let $x > 0$. For real numbers $a_n$, $|a_n| \le 1$, let
 \[W(X, x) = \sum_{\frac X8 < n\le X} a_ne(x[n^c]).\]
For $2 \le H \le X$, we have
 \begin{align*}
W(X,x) &\ll \frac{X\mc L}H + \sum_{0 \le h \le H}\min\left(1, \frac 1h\right)\Bigg|\sum_{\frac X8 < n\le X} a_ne ((h+\g)m^c)\Bigg|\\[2mm]
&+ \sum_{1 \le h \le H} \frac 1h\ \Bigg|\sum_{\frac X8 < n\le X} e(hn^c)\Bigg| + \sum_{h > H}\, \frac H{h^2}\ \Bigg|\sum_{\frac X8 < n\le X} e(hn^c)\Bigg|.
 \end{align*}
Here $\g = \{x\}$ or $-\{x\}$.
 \end{lem}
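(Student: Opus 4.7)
Since $[x]\cdot[n^c]\in\mb Z$, we have $e(x[n^c])=e(\{x\}[n^c])$, so we may replace $x$ by $\gamma:=\{x\}\in[0,1)$. Splitting $[n^c]=n^c-\{n^c\}$ gives
\[e(\gamma[n^c])=e(\gamma n^c)\,e(-\gamma\{n^c\}).\]
The problem reduces to approximating the $1$-periodic multiplier $g(y):=e(-\gamma\{y\})$ by a trigonometric polynomial in $y$ of degree $\le H$, evaluated at $y=n^c$.

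A direct computation of Fourier coefficients gives $\hat g(h)=\frac{1-e(-\gamma)}{2\pi i(h+\gamma)}$, so $|\hat g(h)|\ll\min(1,1/|h|)$ uniformly in $\gamma\in[0,1)$. Because $g$ has a jump of size $|1-e(-\gamma)|=O(\gamma)$ at every integer, its Fourier series is not uniformly convergent; however, a standard Vinogradov--Vaaler (Beurling--Selberg / Fej\'er-kernel) smoothing delivers a decomposition
\[g(y)=\sum_{|h|\le H}\hat g(h)\,e(hy)+E(y),\qquad |E(y)|\le\tilde E(y),\]
where the non-negative majorant $\tilde E(y)=\sum_h\tilde e_h\,e(hy)$ has real symmetric coefficients satisfying $\tilde e_0\ll\mc L/H$, $|\tilde e_h|\ll1/|h|$ for $1\le|h|\le H$, and $|\tilde e_h|\ll H/h^2$ for $|h|>H$. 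The $\mc L$ in $\tilde e_0$ reflects the integrated contribution of a Gibbs-type region of width $\asymp1/H$ near each integer.

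Substituting and exchanging sums,
\[W(X,x)=\sum_{|h|\le H}\hat g(h)\sum_{X/8<n\le X}a_n\,e((h+\gamma)n^c)+\sum_{X/8<n\le X}a_n\,e(\gamma n^c)\,E(n^c).\]
In the main term, indices $h=-k\le-1$ are folded onto positive indices via $e((-k+\gamma)n^c)=\overline{e((k-\gamma)n^c)}$ (legal since $a_n$ is real), and $k-\gamma$ is rewritten as $h'+\gamma'$ with $h'=k\ge1$ and $\gamma'=-\gamma=-\{x\}$; this is the origin of the ``$\gamma=\{x\}$ or $-\{x\}$'' clause. Invoking $|\hat g(h)|\ll\min(1,1/|h|)$ then produces the second sum in the conclusion. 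For the error we use $|a_n|\le1$ to obtain
\[\biggl|\sum_{X/8<n\le X}a_n\,e(\gamma n^c)\,E(n^c)\biggr|\le\sum_{X/8<n\le X}\tilde E(n^c)=\tilde e_0\cdot O(X)+\sum_{h\ne 0}\tilde e_h\sum_{X/8<n\le X} e(hn^c),\]
and pairing $\pm h$ together with the coefficient bounds on $\tilde e_h$ yields precisely the $X\mc L/H$, $\sum_{1\le h\le H}h^{-1}|\sum e(hn^c)|$, and $\sum_{h>H}Hh^{-2}|\sum e(hn^c)|$ contributions.

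The main technical point is producing the majorant $\tilde E$ with the three-regime coefficient profile above; this is a classical Beurling--Selberg / Fej\'er-kernel construction tailored to the jumps of $g$ at integers, and essentially dictates the shape of the final bound.
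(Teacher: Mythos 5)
Your proposal is correct and follows essentially the same route as the paper's own proof. The paper cites a truncated Fourier expansion of $e(-\alpha\{t\})$ (from \cite[Lemma 2.3]{rcb2}) with remainder $O\bigl(\min(1,\tfrac{1}{H\|t\|})\bigr)$, and then expands that remainder in its own Fourier series with coefficients $c_h\ll\min\bigl(\tfrac{\log H}{H},\tfrac{1}{|h|},\tfrac{H}{h^2}\bigr)$; you package the same content as a single Fej\'er/Vaaler-type nonnegative majorant $\tilde E$ with the identical three-regime coefficient profile, and your computation of $\hat g(h)=\frac{1-e(-\gamma)}{2\pi i(h+\gamma)}$, the folding of negative indices using that $a_n$ is real (giving the $\gamma=\{x\}$ or $-\{x\}$ alternatives), and the treatment of the error term all match the paper's steps.
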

 
 \begin{proof}
Let $\a = \{x\}$. By \cite{rcb2}, Lemma 2.3, we have
 \[e(-\a\{t\}) = \sum_{|h|\le H} c_h(\a) e(ht)
 + O\left(\min\left(1, \frac 1{H\|t\|}\right)\right)\]
$(t\in \mb R)$, where
 \[c_h(\a) = \frac{1-e(-\a)}{2\pi i(h+\a)}.\]
Moreover,
 \[\min\left(1, \frac 1{H\|t\|}\right) = 
 \sum_{h=-\infty}^{\infty} c_h e(ht)\]
where
 \[c_h \ll \min\left(\frac{\log H}H, \frac 1{|h|},
 \frac H{h^2}\right);\]
see e.g.~\cite{hb2}. Note that for real $u$, this implies
 \begin{align*}
e(\a[u]) &= e(\a u)e(-\a\{u\})\\
&= e(\a u) \sum_{|h| \le H} c_h(\a)\, e(hu) + O\Bigg(
\sum_{h=-\infty}^\infty c_h e(hu)\Bigg).
 \end{align*}
Set $u = n^c$, so that
 \[e(x[n^c]) = e(\a[n^c]).\]
Summing over $n$,
 \begin{align*}
W(X,x) = \sum_{\frac X8 < n\le X} a_ne(\a[n^c]) &= \sum_{|h|\le H}
 c_h(\a) \sum_{\frac X8 < n\le X} a_ne((h+\a)n^c)\\[2mm]
&\qquad + O\Bigg(\sum_{h=-\infty}^\infty c_h \, \sum_{\frac X8 < n\le X} e(hn^c)\Bigg).
 \end{align*}
The `$O$' term yields
 \[\ll \frac{X\log X}H + \sum_{1 \le h \le H} \frac 1h\
 \Bigg|\sum_{\frac X8 < n\le X} e(hn^c)\Bigg| + \sum_{h > H} 
 \frac H{h^2} \ \Bigg|\sum_{\frac X8 < n\le X} e(hn^c)\Bigg|.\]
For the remaining terms we use
 \[c_h(\a) \ll \begin{cases}
 1 & \text{if $h=0$}\\[2mm]
 \dfrac 1h & (h\ne 0) \end{cases}\]
and note that for $h = -1, -2, \ldots, -[H]$ we have
 \[h+\a = -(|h|-\a).\qedhere\]
 \end{proof}
 
 \begin{lem}[Kusmin-Landau]\label{lem2}
If $f$ is continuously differentiable, $f'$ is monotonic, and $|f'| \ge \l$ on $[N, 2N]$, then
 \[S(f,N) \ll \l^{-1}.\]
 \end{lem}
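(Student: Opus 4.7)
The plan is to deduce this classical Kusmin--Landau inequality by a telescoping identity combined with Abel summation. Taking the hypothesis $|f'|\ge\lambda$ in its intended meaning (distance from $f'$ to the nearest integer at least $\lambda$), the first step is to normalise: since $f'$ is continuous on $[N,2N]$ and stays at distance $\ge\lambda>0$ from every integer, it lies in a single open unit interval $(k,k+1)$ for some $k\in\mathbb Z$. Replacing $f(n)$ by $f(n)-kn$, which does not change $e(f(n))$ for integer $n$, we may assume $f'(x)\in(0,1)$ with $\min(f'(x),\,1-f'(x))\ge\lambda$ throughout $[N,2N]$.

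The key identity I would then exploit is
\[e(f(n))=\frac{e(f(n+1))-e(f(n))}{e(f(n+1)-f(n))-1},\]
valid whenever the denominator is nonzero. The mean value theorem yields $f(n+1)-f(n)=f'(\xi_n)\in(0,1)$ for some $\xi_n\in(n,n+1)$, which stays at distance $\ge\lambda$ from every integer. Consequently
\[\bigl|e(f(n+1)-f(n))-1\bigr|=2|\sin(\pi f'(\xi_n))|\gg\lambda,\]
and by the monotonicity of $f'$ this modulus is itself monotonic in $n$ on each of the at most two sub-intervals of $I(N)$ determined by whether $f'\le 1/2$ or $f'\ge 1/2$.

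Writing $I(N)=(N,M]$ and isolating the boundary term $e(f(M))$, I would apply Abel summation to
\[\sum_{N<n<M}\frac{e(f(n+1))-e(f(n))}{e(f(n+1)-f(n))-1}\]
with monotone weights $b_n=\bigl(e(f(n+1)-f(n))-1\bigr)^{-1}$ against summands $a_n=e(f(n+1))-e(f(n))$, whose partial sums telescope and are bounded by $2$ in modulus. Since $|b_n|\ll\lambda^{-1}$ and the total variation of $|b_n|$ on each monotone piece is itself $\ll\lambda^{-1}$, the Abel estimate delivers $S(f,N)\ll\lambda^{-1}$.

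The main technical point is the monotonicity bookkeeping on the weights $|b_n|$, which is handled by the at-most-one-crossing split at $f'=1/2$. Beyond this the argument is entirely elementary: there is no deep exponential-sum input required, which is appropriate since this lemma serves as the base case underlying the van der Corput-type bounds invoked elsewhere in the paper.
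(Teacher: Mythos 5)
The paper's ``proof'' of this lemma is simply a citation to \cite[Theorem 2.1]{grakol}, so you are supplying an argument where the paper gives none; what you have written out is, in substance, the standard proof of the Kusmin--Landau inequality via the telescoping identity and Abel summation, which is also how the cited reference establishes it. You have also read the hypothesis in the intended way: as written, $|f'|\ge\lambda$ must mean $\|f'\|\ge\lambda$ (distance to the nearest integer), since $f(x)=x$ would otherwise be an immediate counterexample, and your normalisation $f(n)\mapsto f(n)-kn$ handles this correctly.

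There is, however, one step that does not go through as written. Abel summation bounds $\bigl|\sum_n a_n b_n\bigr|$ by $\max_n|A_n|\,\bigl(|b_M|+\sum_n|b_{n+1}-b_n|\bigr)$, so what must be controlled is the total variation of the complex sequence $b_n$, not of its modulus $|b_n|$. These are genuinely different: the sequence $b_n=(-1)^n\lambda^{-1}$ has $|b_n|$ constant, hence of zero variation, yet $\sum_n|b_{n+1}-b_n|$ grows linearly in the length of the range. So monotonicity of $|b_n|$ on the two pieces around $f'=1/2$ does not by itself yield the required bound. What actually makes your argument work is the explicit form of the weights: with $g_n:=f(n+1)-f(n)\in[\lambda,1-\lambda]$,
\[
b_n=\frac{1}{e(g_n)-1}=-\frac 12-\frac i2\cot(\pi g_n),
\]
so $\re(b_n)\equiv -\tfrac12$ is constant while $\im(b_n)=-\tfrac12\cot(\pi g_n)$ is monotone in $n$, because $g_n$ is monotone (as $f'$ is) and $\cot$ is monotone on $(0,\pi)$. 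Hence
\[
\sum_n|b_{n+1}-b_n|=\bigl|\im(b_M)-\im(b_{N+1})\bigr|\le 2\max_n|b_n|\ll\lambda^{-1},
\]
with no need to split at $f'=1/2$ at all. With this replacement of the variation bookkeeping, your proof is complete and matches the textbook argument.
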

 
 \begin{proof}
\cite[Theorem 2.1]{grakol}.
 \end{proof}
 
 \begin{lem}[$A$ process]\label{lem3}
For $1 \le Q \le N$,
 \[S(f, N)^2 \ll \frac NQ \ \sum_{|q| < Q} \
 |S(f_q,N)|\]
where $f_q(x) = f(x+q) - f(x)$.
 \end{lem}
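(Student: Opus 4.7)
The plan is to apply the classical Weyl--van der Corput averaging trick: replace the sum by an average over short translations and then apply Cauchy--Schwarz to convert it into a sum of differenced sums. Concretely, I would extend $a_n := e(f(n))$ to all integers by setting $a_n = 0$ outside $I(N)$, so that $S(f,N) = \sum_{n\in\mathbb Z} a_n$. Since re-indexing $n \mapsto n+q$ does not change the value of the sum, I would write
\[
Q \cdot S(f,N) \;=\; \sum_{q=0}^{Q-1} \sum_{n} a_{n+q} \;=\; \sum_{n} \sum_{q=0}^{Q-1} a_{n+q},
\]
and observe that the integers $n$ contributing to the inner sum lie in an interval of length $\ll N$ (here is where the hypothesis $Q \le N$ is used).

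Next I would apply Cauchy--Schwarz to the outer sum over $n$, obtaining
\[
Q^{2}\,|S(f,N)|^{2} \;\ll\; N \sum_{n} \Bigl|\sum_{q=0}^{Q-1} a_{n+q}\Bigr|^{2}.
\]
Expanding the square, exchanging summations, and grouping the pairs $(q_1,q_2)$ by their difference $q = q_1 - q_2$ gives
\[
\sum_{n} \Bigl|\sum_{q=0}^{Q-1} a_{n+q}\Bigr|^{2} \;=\; \sum_{|q|<Q} (Q-|q|) \sum_{m} a_{m+q}\,\overline{a_{m}}.
\]
For each fixed $q$, the condition $a_{m+q}\overline{a_m}\ne 0$ forces both $m$ and $m+q$ to lie in $I(N)$, so this set is a subinterval of $(N,2N]$; moreover $a_{m+q}\overline{a_m} = e(f(m+q)-f(m)) = e(f_q(m))$. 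Hence the inner sum is exactly $S(f_q,N)$ for a suitable choice of subinterval (permitted by the flexible meaning of $I(N)$ in the paper's notation), and the $q=0$ term is simply bounded by the interval length $\ll N$.

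Combining these two steps with the crude estimate $Q-|q| \le Q$ yields
\[
Q^{2}\,|S(f,N)|^{2} \;\ll\; N Q \sum_{|q|<Q} |S(f_q,N)|,
\]
which is the stated inequality after dividing by $Q^{2}$. There is no real obstacle; the only points requiring mild care are the boundary bookkeeping (keeping the support interval of length $\ll N$, which uses $Q \le N$) and the observation that the $q=0$ contribution, giving the number of lattice points in $I(N)$, is harmlessly absorbed into $|S(f_0,N)|$ on the right.
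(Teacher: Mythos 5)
Your proof is correct and is precisely the classical Weyl--van der Corput shifting-and-Cauchy--Schwarz argument; the paper itself supplies no details here, merely citing Graham--Kolesnik, p.~10, which records exactly this argument. The one small bookkeeping point you flag (absorbing the $q=0$ term, which equals $|I(N)| \ll N$, into the right-hand side) is handled correctly.
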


 \begin{proof}
\cite[p. 10]{grakol}.
 \end{proof}

 \begin{lem}[$B$ process]\label{lem4}
Suppose that $f'' \asymp FN^{-2}$ on $[N,2N]$, where $F > 0$, and
 \[f^{(j)}(x) \ll FN^{-j} \quad (j = 3, 4) 
 \text{ for } x \in [N,2N].\]
Define $x_\nu$ by $f'(x_\nu) = \nu$ and let
 \[\phi(\nu) = -f(x_\nu) + \nu x_\nu.\]
Then for $I(N) = [a,b]$,
 \[S(f,N) = \sum_{f'(b) \le \nu \le f'(a)} 
 \frac{e(-\phi(\nu)-1/2)}{|f''(x_\nu)|^{1/2}} 
 + O(\log(FN^{-1} + 2) + F^{-1/2}N).\]
 \end{lem}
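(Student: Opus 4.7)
The plan is to prove Lemma \ref{lem4} by the standard truncated Poisson summation followed by stationary phase, so I will essentially reproduce the derivation of the classical $B$-process found, e.g., in Titchmarsh or in \cite{grakol}. First I will apply truncated Poisson summation to $S(f,N)$ on $I(N) = [a,b]$ and write
\[
S(f,N) = \sum_{\nu \in \mathbb{Z}} \int_a^b e(f(t) - \nu t)\, dt + (\text{boundary/truncation error}),
\]
using $\eta$-smoothing of the characteristic function of $[a,b]$ (or Van der Corput's identity) to make the sum over $\nu$ effectively finite. The effective range of $\nu$ is controlled by the fact that $f'$ maps $[a,b]$ into an interval of length $\asymp F/N$, since $f'' \asymp FN^{-2}$.

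Next I will split the $\nu$-sum into two ranges: $\nu \in [f'(b), f'(a)]$ (the interval where the integrand has a stationary point in $[a,b]$) and $\nu$ outside this interval. For the non-stationary range I will integrate by parts once or twice, using $|f'(t) - \nu| \gg$ distance from the stationary range, and the bound $f'' \asymp FN^{-2}$ to show the total contribution is $\ll \log(FN^{-1}+2)$. This handles the tails, and also gives the logarithmic term in the error.

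For $\nu \in [f'(b), f'(a)]$ I will evaluate $\int_a^b e(f(t) - \nu t)\, dt$ by stationary phase at $t = x_\nu$, where $f'(x_\nu) = \nu$. Expanding $f(t)-\nu t = -\phi(\nu) + \tfrac12 f''(x_\nu)(t-x_\nu)^2 + O(F N^{-3}|t-x_\nu|^3)$ and carrying out the Gaussian integral produces the main term
\[
\frac{e(-\phi(\nu) - 1/2)}{|f''(x_\nu)|^{1/2}},
\]
where the $-1/2$ in the argument of $e$ comes from the sign of $f''$ via the standard Fresnel evaluation $\int e(\pm \tfrac12 A u^2)\, du = A^{-1/2} e(\pm 1/8)$. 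The error per $\nu$ from truncating the tails of the Gaussian and from the cubic remainder is controlled using the bounds on $f^{(3)}, f^{(4)}$, and summing over the $\asymp F/N$ values of $\nu$ yields the $F^{-1/2}N$ term in the error.

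The main obstacle, and where care is required, is the boundary behavior when $\nu$ is near $f'(a)$ or $f'(b)$: there the stationary point $x_\nu$ may lie within $O(N F^{-1/2})$ of an endpoint, so the quadratic approximation cannot be run freely and the two contributions (non-stationary integration by parts versus stationary phase) must be matched. The standard trick is to replace $\mathbf{1}_{[a,b]}$ by a smooth cutoff agreeing with it outside a small shell and estimate the shell contribution trivially; this is routine but is the most delicate bookkeeping in the argument. The hypotheses $f^{(3)} \ll FN^{-3}$ and $f^{(4)} \ll FN^{-4}$ are exactly what is needed to make the cubic remainder in the Taylor expansion and the error in inverting $f' = \nu$ negligible, so no further assumptions are required.
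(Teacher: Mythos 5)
The paper gives no proof of its own here; it simply cites \cite[Lemma 3.6]{grakol}, so any correct derivation effectively reproduces the standard argument in that source, and yours is exactly that: truncated Poisson summation, stationary-phase evaluation for $\nu\in[f'(b),f'(a)]$, integration by parts for the remaining frequencies, with matching at the boundary. The outline is structurally sound. One correction on where the $F^{-1/2}N$ term comes from: it is \emph{not} obtained by summing uniformly small cubic/quartic corrections over all $\asymp F/N$ stationary frequencies. If each per-$\nu$ error were as large as $|f''(x_\nu)|^{-1/2}\asymp NF^{-1/2}$, the total would be $\asymp F^{1/2}$, which is the size of the main term, not an admissible error. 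Rather, $NF^{-1/2}$ is contributed by the $O(1)$ boundary frequencies $\nu$ within distance $O(1)$ of $f'(a)$ or $f'(b)$ — for these the stationary point $x_\nu$ is so close to an endpoint that the stationary-phase error is the full $|f''(x_\nu)|^{-1/2}$ — while the logarithmic term $\log(FN^{-1}+2)$ arises from the harmonic sum $\sum_\nu \min\bigl(|f'(a)-\nu|^{-1},|f''|^{-1/2}\bigr)$ (and the analogous sum at $b$) over the remaining $\nu$, both inside and outside the stationary range. The cubic/quartic corrections, under the stated hypotheses $f^{(j)}\ll FN^{-j}$, sum to something dominated by these terms. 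Since you already flag the endpoint matching as the delicate point, this is compatible with your sketch, but the bookkeeping you gesture at in the last paragraph is precisely where those two error contributions are generated, and the narration in your fourth paragraph assigns the $F^{-1/2}N$ to the wrong source.
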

 
 \begin{proof}
\cite[Lemma 3.6]{grakol}.
 \end{proof}
 
 \begin{lem}\label{lem5}
(i) Let $\ell \ge 0$ be a given integer, $L = 2^\ell$. Suppose that $f$ has $\ell + 2$ continuous derivatives on $[N,2N]$ and
 \[|f^{(r)}(x)| \asymp FN^{-r}
 (r = 1, \ldots, \ell + 2, x \in[N,2N]).\]
Then
 \[S(f, N) \ll F^{1/(4L-2)}N^{1-(\ell + 2)/(4L-2)}
 + F^{-1}N.\]
(ii) Let $f(x) = yx^b$ where $y \ne 0$, $\frac b{b-1} \not\in \{2, 3, \ldots, \ell + 1\}$. With $F = |y|N^c$ and $\ell$, $L$ as in (i),
 \[S(f, N) \ll F^{1/2-\frac{\ell+1}{4L-2}} \,
 N^{\frac{\ell+2}{4L-2}} + NF^{-1}.\]
 \end{lem}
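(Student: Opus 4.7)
The plan is to prove (i) by induction on $\ell\ge 0$ using the $A$ process, and then to deduce (ii) from (i) by first applying the $B$ process to the monomial $f(x)=yx^b$.

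For part (i), the base case $\ell=0$ follows immediately from Lemma \ref{lem2}: since $|f''|\asymp F/N^2$ forces $f''$ to have constant sign, $f'$ is monotonic, and Kusmin-Landau with $\lambda\asymp F/N$ gives $S(f,N)\ll N/F$, which is exactly the $F^{-1}N$ summand (the $F^{1/2}$ term is extra slack). For the inductive step, set $L_1=L/2$ and $K_1=4L_1-2$, so that $K:=4L-2=2K_1+2$. Apply Lemma \ref{lem3} with a parameter $Q$ to be chosen in $[1,N]$:
\[S(f,N)^2\ll \frac{N^2}{Q}+\frac{N}{Q}\sum_{1\le q<Q}|S(f_q,N)|,\qquad f_q(x):=f(x+q)-f(x).\]
By the mean value theorem, $|f_q^{(r)}(x)|\asymp qFN^{-(r+1)}$ for $1\le r\le\ell+1$, so $f_q$ satisfies the hypothesis of (i) with $\ell\to\ell-1$ and $F\to F_q:=qF/N$. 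The inductive hypothesis yields
\[|S(f_q,N)|\ll (qF/N)^{1/K_1}N^{1-(\ell+1)/K_1}+N^2/(qF).\]
Summing over $q<Q$ and inserting gives
\[S(f,N)^2\ll \frac{N^2}{Q}+Q^{1/K_1}F^{1/K_1}N^{2-(\ell+2)/K_1}+\frac{N^3\log Q}{QF}.\]
The choice $Q=F^{-1/(K_1+1)}N^{(\ell+2)/(K_1+1)}$ balances the first two terms at $F^{1/(K_1+1)}N^{2-(\ell+2)/(K_1+1)}$; taking square roots delivers the claimed exponent pair $(1/K,1-(\ell+2)/K)$. When this optimal $Q$ falls outside $[1,N]$, or when the logarithmic third term would dominate, one applies Kusmin-Landau directly to $f$ to obtain $S(f,N)\ll N/F$, absorbed into the $F^{-1}N$ summand.

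For part (ii), I first apply Lemma \ref{lem4} to $f(x)=yx^b$. Solving $f'(x_\nu)=\nu$ yields $x_\nu=(\nu/by)^{1/(b-1)}$ and $\phi(\nu)=(b-1)y(\nu/by)^{b/(b-1)}$, a monomial in $\nu$ with exponent $b':=b/(b-1)$. The new sum runs over an interval of length $N'\asymp F/N$, and a direct check shows the new scale parameter is again $F$ (i.e.\ $|\phi^{(r)}(\nu)|\asymp F(N')^{-r}$ on the relevant range). The hypothesis $b'\notin\{2,\ldots,\ell+1\}$ ensures that the falling factorial $b'(b'-1)\cdots(b'-r+1)$ is nonzero for every $r\le\ell+2$, so the monomial $\phi$ satisfies the derivative hypotheses of (i). Applying (i) to this shorter sum and multiplying by the prefactor $|f''(x_\nu)|^{-1/2}\asymp N/F^{1/2}$ yields the principal term $F^{1/2-(\ell+1)/K}N^{(\ell+2)/K}$. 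The $B$ process error $F^{-1/2}N$ is $\ll F^{1/2}$ once $F\ge N$; in the opposite regime $F<N$ the error would dominate, but there Kusmin-Landau applied directly to $f$ gives $NF^{-1}$, yielding the second summand of the bound.

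The main obstacle is the bookkeeping of parameters through the iteration: one must track $K_1$, $F_q$ and the optimal $Q$ at each step of the induction and verify that $Q$ stays in $[1,N]$ (otherwise the $F^{-1}N$ regime takes over). In part (ii) the delicate point is confirming that the condition on $b/(b-1)$ is precisely what is needed so that the transformed sum after the $B$ process inherits the asymptotic $|\phi^{(r)}|\asymp F(N')^{-r}$ required by (i); without this hypothesis the $(b'+1)$-st derivative of $\phi$ vanishes and the exponent pair machinery breaks down.
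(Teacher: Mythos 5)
Your part (ii) argument matches the paper's exactly: apply the $B$ process to the monomial $f(x)=yx^b$, verify that the transformed sum involves a monomial with exponent $b/(b-1)$ on a range of length $\asymp F/N$ with the same scale $F$, invoke part (i), multiply by $|f''(x_\nu)|^{-1/2}\asymp N/F^{1/2}$, and absorb the error terms; and you correctly identify why the excluded values of $b/(b-1)$ are precisely those that would make some $\phi^{(r)}$ vanish. For part (i) the paper simply cites Theorem 29 of Graham--Kolesnik, so your attempt to prove it by an $A$-process induction (i.e.\ to re-derive the exponent pair $A^\ell B(0,1)$) is a genuinely different, more self-contained route, and the inductive step you sketch -- Weyl--van der Corput differencing, the relation $4L-2 = 2(4(L/2)-2)+2$, the optimization over $Q$ -- is structurally correct.

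However, there is a real gap in the base case. For $\ell=0$ the assertion is $S(f,N)\ll F^{1/2}+F^{-1}N$, which is van der Corput's second-derivative estimate; it does \emph{not} follow from Kusmin--Landau alone. Kusmin--Landau (Lemma \ref{lem2}) gives $\ll\lambda^{-1}$ only where $\lambda$ is a lower bound on the \emph{distance of $f'$ to the nearest integer}, and it yields nothing useful once $F\gg N$, when $f'$ sweeps through integer values on $[N,2N]$. In the regime $N^{2/3}\ll F\ll N^2$ the dominant term of the claimed bound is $F^{1/2}$, which is strictly smaller than $N$ and strictly smaller than the $N/F$ you obtain; calling $F^{1/2}$ ``extra slack'' is backwards. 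Moreover the inductive step genuinely needs the full $\ell=0$ bound: with $F_q=qF/N$ and $q$ ranging up to $Q$, you can have $F_q\gg N$, so the $F^{1/2}$ piece of the base case cannot be avoided. To close the induction you must prove the $\ell=0$ case by a further $A$-step (Weyl--van der Corput followed by Kusmin--Landau on the differenced linear-phase-adjusted sums) or a $B$-step; equivalently, start the induction one level lower at the trivial exponent pair $(0,1)$ and build up $B(0,1)=(1/2,1/2)$ first.
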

 
 \begin{proof}
Part (i) is Theorem 29 of \cite{grakol}. If $FN^{-1} < \eta$, part (ii) follows from the Kusmin-Landau theorem. Suppose now that $FN^{-1} \ge \eta$. We apply the $B$ process to $S(f,N)$ and then part (i) of the lemma to the resulting sum (after a partial summation). This yields
 \begin{align*}
S(f,N) &\ll NF^{-1/2}\left(F^{1/(4L-2)}(FN^{-1})^{1-\frac{\ell+2}{4L-2}} + (FN^{-1})F^{-1}\right)\\[2mm]
&\hskip 1in + NF^{-1/2} + 1.
 \end{align*}
The last three terms can be absorbed into the first term.
 \end{proof}
 
 \begin{lem}\label{lem6}
Suppose that $g^{(6)}$ is continuous on $[1,2]$ and
 \begin{alignat*}{2}
g^{(j)}(x) &\ll 1 \quad &&(5 \le j \le 6)\\[2mm]
g^{(j)}(x) &\asymp 1 &&(2 \le j \le 4).
 \end{alignat*}
Let $T$, $N$ be positive with
 \[T^{1/3} \ll N \ll T^{1/2}.\]
Let
 \[S_h = \sum_{m\in I_h} e\left(Ty_hg\left(
 \frac mN\right)\right)\]
where $I_h$ is a subinterval of $[N,2N]$ and $y_1,\ldots, y_H \in[1,2]$ with
 \[y_{j+1} - y_j \gg \frac 1H \quad (j=1,\ldots, H-1).\]
Then
 \begin{align*}
\sum_{h=1}^H |S_h| &\ll H^{319/345} N^{449/690} T^{\frac{63}{690}+\eta}\\[2mm]
&\qquad + HN^{1/2} T^{141/950 + \eta}.
 \end{align*}
 \end{lem}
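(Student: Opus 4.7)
The plan is to apply van der Corput's $B$ process to each $S_h$ and then to bound the resulting sums in two distinct ways: by a pointwise $k$-th derivative estimate coming from Bourgain--Heath-Brown (which yields the second term), and by a mean-value argument that exploits the $H^{-1}$-spacing of the $y_h$ (which yields the first term).

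First, I would apply Lemma \ref{lem4} to $S_h = \sum_{m\in I_h} e(Ty_h g(m/N))$. Setting $f_h(m) = Ty_h g(m/N)$, the hypotheses on $g$ give $f_h^{(j)}(m) \asymp T/N^j$ for $2 \le j \le 4$ and $f_h^{(j)} \ll T/N^j$ for $j \in \{5,6\}$. The range $T^{1/3} \ll N \ll T^{1/2}$ forces the $B$-process error terms $\log(FN^{-1}+2) + F^{-1/2}N$ to be $\ll \mc L$, and we obtain
\[
|S_h| \ll \frac{N}{T^{1/2}}\,\bigl|\widetilde S_h\bigr| + \mc L,
\]
where $\widetilde S_h = \sum_{\nu\in J_h} e(-\phi_h(\nu))$ is a sum of length $M \asymp T/N$ whose phase $\phi_h$ is obtained from the Legendre transform of $Ty_h g$. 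A direct computation shows that $\phi_h$ inherits from $g$ the asymptotics $\phi_h^{(j)}(\nu) \asymp (N^2/T)\,M^{-(j-2)}$ for $2 \le j \le 4$, together with corresponding upper bounds for $j = 5, 6$.

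Second, for the term $HN^{1/2} T^{141/950+\eta}$, I apply to each individual $\widetilde S_h$ the $k$-th derivative bound that follows from Bourgain's Vinogradov mean value theorem \cite{bou}, as packaged by Heath-Brown in \cite{hb3}. The exponent $\tfrac{141}{950}$ is the numerical value obtained for this particular configuration of derivative orders, with $M \asymp T/N$ and $N \ll T^{1/2}$; summing trivially over $h = 1,\ldots,H$ produces the pointwise contribution.

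Third, for the mean-value term $H^{319/345} N^{449/690} T^{63/690+\eta}$, the strategy is to exploit the well-spacing $y_{j+1} - y_j \gg H^{-1}$. I would apply H\"older's inequality in $h$ to reduce to a higher even moment $\sum_h |\widetilde S_h|^{2k}$, expand the $2k$-fold product, and reverse the order of summation. The inner sum then becomes a one-dimensional exponential sum $\sum_h e(Ty_h\,\Delta(\underline\nu))$ indexed by tuples $\underline\nu$, where $\Delta$ is a phase difference; the spacing of the $y_h$ combined with Lemma \ref{lem2} (Kusmin--Landau) handles this inner sum, while a further application of the Bourgain--Heath-Brown derivative estimate to the outer multiple sum over $\underline\nu$ yields the claimed bound, once the H\"older exponent $k$ is optimized. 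The main obstacle is the explicit bookkeeping required here: verifying that the Legendre-transformed phase $\phi_h$ inherits $6$th-derivative control strong enough to invoke \cite{hb3}, and carrying out the multi-parameter optimization producing the specific fractions $\tfrac{319}{345}$, $\tfrac{449}{690}$, $\tfrac{63}{690}$, $\tfrac{141}{950}$. This optimization is delicate enough that the author explicitly thanks Weingartner for the corresponding numerical computations.
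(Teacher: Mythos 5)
Your proposal takes a genuinely different route from the paper, and that route has a concrete gap. The paper's entire proof is a one-liner: ``We combine a special case of \cite[Theorem 2]{hux} with H\"older's inequality.'' Huxley's Theorem 2 is a Bombieri--Iwaniec-style bound for a \emph{family} of exponential sums indexed by a well-spaced parameter set (precisely the situation at hand, with the $y_h$ spaced by $\gg H^{-1}$), and the exponents $319/345$, $449/690$, $63/690$, $141/950$ are exactly the numerology that emerges from Huxley's refined analysis of the first and second spacing problems. The role of H\"older in the paper is only to pass from the power of $|S_h|$ that appears in Huxley's theorem to the first moment $\sum_h |S_h|$; no $B$ process is applied by the author, and neither \cite{bou} nor \cite{hb3} is invoked.

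Your plan, by contrast, routes the argument through a $B$ process to shorten the sums, then through the Heath-Brown/Bourgain $k$-th derivative estimate (Lemma \ref{lem8}), and finally through an ad hoc mean-value step: ``apply H\"older in $h$, expand the $2k$-fold product, reverse the order of summation, use Kusmin--Landau on the inner $h$-sum, then apply the Bourgain--Heath-Brown derivative estimate to the outer $\underline{\nu}$-sum.'' This third step is the gap. That outline describes a bilinear/large-sieve scheme in broad strokes, but none of the steps you list will reproduce Huxley's exponents: the first term $H^{319/345}N^{449/690}T^{63/690+\eta}$ encodes the resolution of the Bombieri--Iwaniec second spacing problem (coincidence counting for resonance curves), which is not a consequence of Kusmin--Landau plus a $k$-th derivative bound. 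Likewise, the second term $HN^{1/2}T^{141/950+\eta}$ is Huxley's pointwise bound for a single member of the family, not the outcome of Lemma \ref{lem8} applied to the Legendre-transformed phase; the numerology does not match ($141/950 < 13/84$, which is already the improved Bourgain exponent from Lemma \ref{lem7}, so it cannot come out of a cruder $k$-th derivative estimate). In short, the key ingredient you need is Huxley's Theorem 2 itself, and the machinery you substitute for it does not produce the stated bound.

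Two smaller points. First, even the $B$-process reduction you begin with would already need to be justified for the transformed phase $\phi_h$ (you gesture at ``a direct computation shows''), but the paper avoids this entirely because Huxley's hypotheses are stated directly in terms of $g$. Second, the acknowledgement to Weingartner in the introduction concerns the numerical integrals $d_1,\ldots,d_4$ and exponent-pair computations appearing in Sections 4--9, not the constants in Lemma \ref{lem6}; those come verbatim from Huxley's paper.
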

 
 \begin{proof}
We combine a special case of \cite[Theorem 2]{hux} with H\"older's inequality.
 \end{proof}
 
 \begin{lem}\label{lem7}
Let $g$ be a function with derivatives of all orders on $\left[\frac 12, 1\right]$ and
 \[|g^{(j)}(x)| \gg 1 \quad \left(x \in 
 \left[\frac 12, 1\right], 2 \le j \le 4\right).\]
Let $T$, $N$ be positive with
 \[T^{17/42} \le N \le T^{25/42}.\]
Then
 \[\sum_{n\in I(N)} e\left(Tg\left(\frac nN\right)\right)
 \ll N^{1/2} T^{\frac{13}{84}+\eta}.\]
 \end{lem}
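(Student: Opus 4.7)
The plan is to deduce this lemma directly from the exponential sum bound underlying Bourgain's estimate $\zeta(1/2+it) \ll t^{13/84+\e}$ \cite{bou}, in the general form given by Heath-Brown \cite{hb3}. Recall that Bourgain's zeta bound rests on an estimate
\[\sum_{N < n \le 2N} n^{it} \ll N^{1/2}\, t^{13/84+\e}\]
valid for $N$ in a critical range symmetric about $t^{1/2}$; this is exactly the special case of the lemma in which $g(x) = -(2\pi)^{-1}\log x$ and $T = t$. Heath-Brown's theorem in \cite{hb3} extends the same bound to arbitrary smooth $g$ subject to lower bounds on $|g''|,|g'''|,|g^{(4)}|$ on a compact interval, which is precisely the hypothesis we are given.

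For the concrete execution, my first step would be to verify that the hypothesis on $g$ over $[1/2,1]$ puts us in a setting where the Bourgain-Heath-Brown result applies: the lower bound $|g^{(j)}(x)| \gg 1$ $(2 \le j \le 4)$ is assumed, and smoothness on a compact interval yields the matching upper bound $|g^{(j)}(x)| \ll 1$, so $|g^{(j)}(x)| \asymp 1$. The range condition $T^{17/42} \le N \le T^{25/42}$ is exactly the regime in which this particular decoupling estimate is effective (note that it is the dual-symmetric range under $N \mapsto T/N$). Once these hypotheses are matched, the conclusion is immediate from the cited theorem.

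The main obstacle, and the reason that no amount of the classical van der Corput machinery of Lemmas \ref{lem2}--\ref{lem5} can yield this result, is that the target bound is \emph{self-dual} under the $B$-process. Indeed, applying Lemma \ref{lem4} to $S(f,N)$ with $f(x) = Tg(x/N)$ gives
\[S(f,N) \ll (N/T^{1/2})\,|S^\ast| + O(\log T + NT^{-1/2}),\]
where $S^\ast$ is an exponential sum of the same shape with length $M = T/N$ and smooth phase $\tilde g$ (essentially the Legendre transform of $g$) inheriting the derivative conditions $|\tilde g^{(j)}| \asymp 1$ for $2 \le j \le 4$. Since the interval $[T^{17/42},T^{25/42}]$ is preserved under $N \mapsto T/N$, the desired bound on $S$ is equivalent to the same bound applied to $S^\ast$, so iterating the $B$-process cannot break out. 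Thus any genuine saving must come from Bourgain's $\ell^2$-decoupling estimate, which is the true content of the lemma and which I would invoke as a black box through \cite{bou, hb3}; the error term $O(\log T + NT^{-1/2})$ is easily absorbed into $N^{1/2} T^{13/84+\eta}$ in the stated range of $N$.
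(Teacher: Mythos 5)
Your instinct to reduce this to Bourgain's decoupling estimate is exactly right, and your observation that the range $[T^{17/42},T^{25/42}]$ is symmetric under $N \mapsto T/N$ and that the target bound is self-dual under the $B$-process is the key structural point. But there are two concrete problems in how you cash this out.

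First, the citation to Heath-Brown \cite{hb3} is incorrect. That paper (``A new $k$-th derivative estimate for a trigonometric sum via Vinogradov's integral'') gives the $k$-th derivative bound quoted as Lemma \ref{lem8} in this paper; it is an ancestor of the classical van der Corput $k$-th derivative estimates, sharpened via Vinogradov's mean value theorem. It does not give, and has nothing to do with, the exponent $13/84$ near the critical length $N \asymp T^{1/2}$. In particular it does not ``extend Bourgain's bound to arbitrary smooth $g$.'' The generalization of Bourgain's zeta-sum bound to smooth phases $g$ with $|g^{(j)}| \asymp 1$ for $2 \le j \le 4$ is already Theorem 4 of Bourgain \cite{bou} itself, which is the result you should be quoting. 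So you should strike \cite{hb3} entirely from this argument and cite \cite[Theorem 4]{bou} directly.

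Second, you say the conclusion is ``immediate from the cited theorem,'' but Bourgain's Theorem 4 as stated covers only the range $T^{17/42} \le N \le T^{1/2}$ and the full interval $[N,2N]$. The extension to $T^{1/2} \le N \le T^{25/42}$ is obtained by applying the $B$-process once (Lemma \ref{lem4}) to dualize $N \to T/N$ and land back in Bourgain's range; Bourgain sketches exactly this on pp.~222--223 of \cite{bou}. Your paragraph about self-duality is in fact the correct heuristic for why this single $B$-process step works and why further iteration would be useless, but you present it as a reason the classical machinery ``cannot yield this result'' rather than as the actual mechanism used to cover the upper half of the range. Finally, you pass over the fact that the sum in the lemma is over a subinterval $I(N) \subseteq (N,2N]$ rather than over all of $(N,2N]$; reducing to full dyadic blocks costs a logarithm and is handled by \cite[Lemma 7.3]{grakol}, which must be invoked explicitly. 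In short: drop \cite{hb3}, cite \cite[Theorem 4]{bou}, apply Lemma \ref{lem4} once for $N > T^{1/2}$, and invoke \cite[Lemma 7.3]{grakol} for the subinterval reduction.
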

 
 \begin{proof}
Theorem 4 of \cite{bou} is the case
 \[T^{17/42} \le N \le T^{\frac 12} \ , \
 I(N) = [N, 2N].\]
On pages 222--223 of \cite{bou} it is indicated how to extend this to $\left[T^{\frac 12}, T^{25/42}\right]$ using the $B$ process. An application of \cite[Lemma 7.3]{grakol} enables one to replace $[N,2N]$ by $I(N)$ with the loss of a log factor.
 \end{proof}
 
 \begin{lem}\label{lem8}
Let $k\in \mb N$, $k \ge 3$. Let $f$ have continuous derivatives $f^{(j)}$ $(1 \le j \le k)$ on $[0,N]$,
 \[|f^{(k)}(x)|\asymp \l_k\ \text{ on }\ (0,N].\]
Then
 \begin{equation}\label{eq2.1}
S(f,N) \ll N^{1+\eta}\Big(\l_k^{\frac 1{k(k-1)}} + N^{-\frac 1{k(k-1)}} + N^{-\frac 2{k(k-1)}} \l_k^{-\frac 2{k^2(k-1)}}\Big).
 \end{equation}
 \end{lem}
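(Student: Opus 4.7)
The bound stated is precisely Heath-Brown's $k$-th derivative estimate from \cite{hb3}, so the plan is to invoke that result directly. For a sketch of how one would arrive at it, the approach combines Vinogradov's method with the now-sharp Vinogradov mean value theorem (Bourgain--Demeter--Guth for $k\ge 4$, Wooley for $k=3$).

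First I would split $[0,N]$ into $\asymp N/M$ subintervals of length $M$, with $M$ to be chosen later. On a typical subinterval $[n_0, n_0+M]$, Taylor's theorem together with $|f^{(k)}|\asymp \l_k$ gives
\[f(n_0 + m) = f(n_0) + \sum_{j=1}^{k-1}\frac{f^{(j)}(n_0)}{j!}\,m^j + O(M^k\l_k).\]
Choosing $M$ so that $M^k\l_k \ll 1$ replaces $e(f(n_0+m))$ by $e(P_{n_0}(m))$ up to a factor $1 + O(M^k\l_k)$, where $P_{n_0}$ is a polynomial of degree $k-1$ with coefficients $\a_j = f^{(j)}(n_0)/j!$. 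The task then reduces to bounding the polynomial sums $\sum_{m\le M} e(P_{n_0}(m))$.

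Next I would bring in the sharp Vinogradov mean value
\[\int_{[0,1]^{k-1}}\Big|\sum_{m\le M}e(\a_1 m + \cdots + \a_{k-1}m^{k-1})\Big|^{2s}\,d\bs\a \ll M^{2s - k(k-1)/2 + \eta},\quad s=\tfrac{k(k-1)}{2}.\]
A spacing and large-sieve argument on the $k-1$ coefficients $\a_j(n_0)$, which vary smoothly as $n_0$ ranges through $[0,N]$, converts this moment estimate into a pointwise bound $\ll M^{1 - 1/(k(k-1)) + \eta}$ at typical $n_0$. Summing over the $\asymp N/M$ subintervals and optimising via $M = \l_k^{-1/k}$ produces the leading term $N^{1+\eta}\l_k^{1/(k(k-1))}$. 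The term $N^{-1/(k(k-1))}$ emerges in the regime where $\l_k$ is so small that $M=N$ is already admissible.

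The main obstacle is the exceptional set of $n_0$ for which some $\a_j(n_0)$ admits an unusually good rational approximation $a/q$; on this set the moment estimate is inadequate and one must count the relevant $n_0$ directly, exploiting the smoothness of $f^{(j)}$ to bound how often the trajectory $n_0 \mapsto \a_j(n_0)$ can lie near a rational with small denominator. Balancing the resulting `major arc' contribution against the minor arc bound, and optimising over $j$ and $q$, yields the third term $N^{1 - 2/(k(k-1)) + \eta}\l_k^{-2/(k^2(k-1))}$. This Diophantine bookkeeping — trading the quality of the rational approximation against the density of admissible $n_0$ — is the technically delicate step, and is the principal contribution of \cite{hb3}.
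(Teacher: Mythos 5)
Your proposal correctly identifies the lemma as Heath-Brown's $k$-th derivative estimate and invokes \cite[Theorem 1]{hb3} directly, which is exactly what the paper does (the paper's proof is the bare citation). The additional sketch of Heath-Brown's argument is accurate in outline but not required here.
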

 
 \begin{proof}
\cite[Theorem 1]{hb3}.
 \end{proof}
 
 \begin{lem}\label{lem9}
Let $\t$, $\phi$ be real constants,
 \[\t(\t-1)(\t-2)\phi(\phi-1)(\t + \phi-2)(\t + \phi-3)
 (\t + 2\phi-3)(2\t + \phi-4) \ne 0.\]
Let $F\ge 1$ and let $|a_m| \le 1$. Let
 \[T(\t, \g) = \sum_{m\sim M} a_m \sum_{n\in I_m}
 e\left(\frac{Fm^\t n^\phi}{M^\t N^\phi}\right)\]
where $I_m$ is a subinterval of $(N, 2N]$. Then
 \begin{align*}
T(\t, \g) \ll (MN)^\eta &(F^{3/14} M^{41/56}
 N^{29/56} + F^{1/5} M^{3/4} N^{11/20}\\[2mm]
&+ F^{1/8} M^{13/16} N^{11/16} + M^{3/4} N + MN^{3/4}
+ MNF^{-1}).
 \end{align*}
 \end{lem}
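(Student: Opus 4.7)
The plan is to regard $T(\t,\g)$ as a double exponential sum whose phase $Fm^\t n^\phi/(M^\t N^\phi)$ is a product of two monomials, and to reduce it to a family of single exponential sums via van der Corput $A$-processes (Lemma 3, differencing) and $B$-processes (Lemma 4, stationary phase / Poisson), bounding the resulting single sums by one of Lemmas 5, 6, 7, or 8. The six terms on the right-hand side correspond to different ranges of $(F,M,N)$ in which a different strategy is optimal, and the seven non-vanishing factors in the polynomial in $\t,\phi$ are exactly the non-degeneracy conditions that guarantee each intermediate phase remains of genuine monomial type so that the relevant derivatives of $m^\t n^\phi$ and of its Legendre duals in either variable have the expected monomial magnitudes.

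Concretely, I would first isolate the small-frequency range: when the $n$-derivative of the phase for fixed $m\sim M$ is $\ll 1$, the Kusmin-Landau lemma (Lemma 2) gives $S_m := \sum_{n\in I_m} e(Fm^\t n^\phi/(M^\t N^\phi)) \ll NF^{-1}$, producing the term $MNF^{-1}$ after summing trivially over $m$. In the remaining range I would apply the $A$-process of Lemma 3 in the $m$-variable with shift $Q\in[1,M]$: Cauchy--Schwarz yields
\[
|T(\t,\g)|^2 \ll \frac{M^2 N}{Q} + \frac{M}{Q}\sum_{0<|q|<Q}\Bigl|\sum_{m\sim M}a_m\overline{a_{m+q}} \sum_{n\in I_m\cap I_{m+q}} e\Bigl(\tfrac{F((m+q)^\t-m^\t) n^\phi}{M^\t N^\phi}\Bigr)\Bigr|.
\]
The diagonal $q=0$ part and the corresponding application differencing in the $n$ variable produce the trivial terms $M^{3/4}N$ and $MN^{3/4}$ after balancing $Q$. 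For $q\ne 0$ the differenced phase $Fq\t m^{\t-1}n^\phi/(M^\t N^\phi)$ is again of product-monomial type, and the inner sum over $n$ (or the $m$ sum after swapping) can be attacked by one of three routes: a further $B$-process in $n$ converting $\sum_n$ into its Poisson dual of length $\asymp F/N$ followed by Huxley's Lemma 6 with H\"older-averaging over $m$ (giving the first term $F^{3/14}M^{41/56}N^{29/56}$); a direct application of Bourgain's Lemma 7 in the window where $F/N$ lies in the allowed $[T^{17/42},T^{25/42}]$ range (giving $F^{1/8}M^{13/16}N^{11/16}$); and Heath-Brown's Lemma 8 with $k=3$ (giving $F^{1/5}M^{3/4}N^{11/20}$). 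Optimising the shift parameter $Q$ in each regime produces the three non-trivial exponents in the stated bound.

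The main obstacle is the bookkeeping and case analysis: at every $A$- and $B$-process stage one must verify that the non-vanishing conditions on the phase hold, and the seven factors $\t(\t-1)(\t-2)\phi(\phi-1)(\t+\phi-2)(\t+\phi-3)(\t+2\phi-3)(2\t+\phi-4)$ are precisely what is required to keep the derivatives of $m^\t n^\phi$ and of its stationary-phase transform $m^{-\t/(\phi-1)}\nu^{\phi/(\phi-1)}$ of strict monomial order through each step. One must then confirm that the error terms in Lemma 4 are absorbed into the stated bound, check the hypothesis ranges of Lemmas 6 and 7 in every optimisation, and verify that the three non-trivial strategies exactly exhaust the remaining parameter space so that no term beyond the six listed arises. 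These are routine once the framework is in place but involve a careful multi-parameter comparison.
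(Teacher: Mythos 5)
The paper does not actually prove Lemma \ref{lem9}: its ``proof'' is the single citation $\cite[\text{Theorem }2]{bakwein2}$, that is, Theorem~2 of Baker--Weingartner, \emph{Acta Arith.}\ \textbf{162} (2014). So your sketch cannot be the route that the source takes: Lemma~\ref{lem7} (Bourgain, 2017) and Lemma~\ref{lem8} (Heath-Brown, 2017) both postdate that 2014 paper and hence cannot appear in the proof of its Theorem~2. The actual argument there is built from van der Corput $A$/$B$ processes combined with exponent pairs and Sargos--Wu type multiple-sum estimates (the same toolkit that gives Lemma~\ref{lem11}), not from the Bourgain or Heath-Brown bounds.

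Beyond the anachronism, the specific exponent matchings in your sketch do not hold up. Bourgain's Lemma~\ref{lem7} produces a $T^{13/84}$ saving, which does not lead to an $F^{1/8}$ exponent; Heath-Brown's Lemma~\ref{lem8} with $k=3$ gives the exponent $\frac{1}{k(k-1)}=\frac16$, which does not reproduce the $F^{1/5}$ term either; and a direct attempt to run ``$A$ in $m$, then $B$ in $n$, then a third-derivative estimate'' yields exponents of the shape $F^{1/14}M^{13/14}N^{11/14}$, not $F^{1/8}M^{13/16}N^{11/16}$. The three non-trivial terms $F^{3/14}M^{41/56}N^{29/56}$, $F^{1/5}M^{3/4}N^{11/20}$, $F^{1/8}M^{13/16}N^{11/16}$ are the fingerprint of exponent-pair chains applied after suitable $A$/$B$ steps, not of the large-value or decoupling estimates in Lemmas~\ref{lem6}--\ref{lem8}. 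You also miscount the non-degeneracy factors: there are nine (including $\t-2$ and both $\t+\phi-2$, $\t+\phi-3$), not seven; tracking which $A$/$B$ step each factor protects is itself a meaningful check that your chain is the right one, and the count discrepancy signals that your proposed chain is not. The broad skeleton (Kusmin--Landau for the small-derivative range giving $MNF^{-1}$, $A$-process diagonals giving $M^{3/4}N$ and $MN^{3/4}$) is reasonable, but the core of the bound --- the three middle terms --- is not substantiated by the lemmas you invoke, and the lemma is in fact proved elsewhere by a different combination of tools.
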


 \begin{proof}
\cite[Theorem 2]{bakwein2}.
 \end{proof}
 
We write $(\a)_0 = 1$, $(\a)_s = (\a)_{s-1}(\a + s - 1)$ $(s=1,2,\ldots)$.

 \begin{lem}\label{lem10}
Let $\t$, $\phi$ be real
 \[(\t)_4 (\phi)_4 (\t + \phi + 2)_2 \ne 0.\]
Let $MN \asymp X$, $F \ge 1$, $|a_m| \le 1$. Let $N_0 = \min(M, N)$. Then in the notation of Lemma \ref{lem9},
 \begin{align*}
T(-\t,-\phi) \ll X^\eta (X^{11/12} &+ XN^{-1/2} + F^{1/8} X^{13/16} N^{-1/8}\\[2mm]
&+ (FX^5 N^{-1}N_0^{-1})^{1/6} + XF^{-1}).
 \end{align*}
 \end{lem}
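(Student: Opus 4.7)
The plan is to adapt the proof of Lemma 9 (Theorem 2 of \cite{bakwein2}) to the negative-exponent setting by first applying the B-process (Lemma 4) to invert the inner sum, and then invoking the exponent-pair machinery used in the proof of Lemma 9. The non-vanishing hypotheses $(\t)_4(\phi)_4(\t+\phi+2)_2 \ne 0$ are calibrated to supply exactly the derivative control needed at each stage.

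For each $m \sim M$, consider $S_m := \sum_{n \in I_m} e(f_m(n))$ with $f_m(n) = F(m/M)^{-\t}(n/N)^{-\phi}$. The hypothesis $(\phi)_4 \ne 0$ guarantees that $f_m^{(j)}(n) \asymp F N^{-j}$ with constant sign on $[N, 2N]$ for $j = 1,2,3,4$, so Lemma 4 applies. Computing the stationary point $x_\nu$ from $f_m'(x_\nu) = \nu$ and using the identity $\nu x_\nu = -\phi f_m(x_\nu)$, a short calculation gives
\[\phi_m(\nu) = -(\phi+1) f_m(x_\nu) = c\, F\left(\frac{m}{M}\right)^{-\t'}\left(\frac{\nu}{N_1}\right)^{\phi'},\]
with $N_1 = F/N$, $\t' = \t/(\phi+1)$, $\phi' = \phi/(\phi+1)$, and $c$ a nonzero real constant. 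The amplitude $|f_m''(x_\nu)|^{-1/2} \asymp N/\sqrt F$ (up to a smooth factor of order $1$), while the Lemma 4 error, summed over $m$, contributes $O(XF^{-1/2} + X^\eta)$, absorbed into the $XF^{-1}$ safety term together with the trivial bound $|T(-\t,-\phi)| \le X$.

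After partial summation to strip the smooth stationary-phase amplitude, the task reduces to bounding
\[\frac{N}{\sqrt F}\sum_{m \sim M} a_m \sum_{\nu \in J_m} e\!\left(c'\, F\left(\frac{m}{M}\right)^{-\t'}\left(\frac{\nu}{N_1}\right)^{\phi'}\right).\]
The exponent on $\nu$ is now positive, although the exponent on $m$ is still negative. At this point I would follow the proof strategy of Lemma 9: apply Cauchy--Schwarz and the A-process (Lemma 3) in the $m$-variable to eliminate the weights $a_m$, producing a pure double sum whose inner phase is a difference of translates of $(m/M)^{-\t'}(\nu/N_1)^{\phi'}$; then bound the resulting sum by the same exponent-pair estimates that power Lemma 9. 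The remaining hypothesis $(\t+\phi+2)_2 \ne 0$ translates, after the B-process change of variables, into the non-vanishing of the mixed partial derivatives required in the A-process step. Each exponent-pair estimate contributes, after multiplication by the prefactor $N/\sqrt F$ and simplification via $MN \asymp X$ and $N_0 = \min(M,N)$, one of the terms in the stated bound: the symmetric configuration produces $X^{11/12}$ and $XN^{-1/2}$, the Huxley-type bound yields $F^{1/8}X^{13/16}N^{-1/8}$ (which is dimensionally invariant under the transformation), and the leading term of Lemma 9 becomes $(FX^5 N^{-1} N_0^{-1})^{1/6}$ once a case split on whether $M$ or $N$ is shorter is carried out.

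The main obstacle is the careful exponent bookkeeping through the B-process and the subsequent A-process, together with the verification that $(\t)_4,(\phi)_4,(\t+\phi+2)_2 \ne 0$ collectively supply the non-vanishing of every derivative (and every difference of translates) that arises. The $N_0$ dependence in the fourth term forces a case split, handled by applying the first B-process to whichever of $m, n$ is longer, so that the rescaled variable $N_1 = F/N$ lies in the favorable range. Once this case analysis is in place, the argument runs in close parallel to the proof of Lemma 9.
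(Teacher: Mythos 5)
The paper offers no argument here: its ``proof'' of Lemma \ref{lem10} is a single citation to Lemma 9 of \cite{bakwein}, so there is literally nothing in the paper to compare your reconstruction against. What follows is therefore an assessment of your proposal on its own terms.

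Your opening move (apply the B-process in $n$ to convert $n^{-\phi}$ into a positive power, tracking the new phase $\phi_m(\nu)\asymp F(m/M)^{-\t/(\phi+1)}(\nu/N_1)^{\phi/(\phi+1)}$ and the smooth amplitude $\asymp N/\sqrt F$) is correct and well-motivated, and your derivative bookkeeping is accurate. But the rest of the outline asserts a correspondence with the terms of Lemma \ref{lem9} that does not hold. Writing $M\asymp X/N$ in Lemma \ref{lem9} gives $M^{3/4}N\asymp X^{3/4}N^{1/4}$ and $MN^{3/4}\asymp XN^{-1/4}$, neither of which equals $X^{11/12}$ or $XN^{-1/2}$; and $F^{3/14}M^{41/56}N^{29/56}\asymp F^{3/14}X^{41/56}N^{-3/14}$ does not equal $(FX^5N^{-1}N_0^{-1})^{1/6}$ in either of the two cases $N_0=N$ (which gives $F^{1/6}X^{5/6}N^{-1/3}$) or $N_0=M$ (which gives $(FX^4)^{1/6}$). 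Only $F^{1/8}M^{13/16}N^{11/16}\asymp F^{1/8}X^{13/16}N^{-1/8}$ and $MNF^{-1}\asymp XF^{-1}$ actually match. So ``follow the proof of Lemma \ref{lem9} after the B-process'' cannot, as written, yield the stated bound; Lemma \ref{lem10} is a genuinely different estimate, not a reparametrization of Lemma \ref{lem9}.

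There is also a more structural gap. The citation is to Baker--Weingartner, ``Some applications of the double large sieve''; the key engine in that paper is the Bombieri--Iwaniec double large sieve, applied after the B-process to the resulting bilinear spacing problem. Your outline never invokes the double large sieve at all --- you replace it with ``Cauchy--Schwarz plus the $A$-process in $m$,'' which is the Sargos--Wu style machinery behind Lemma \ref{lem9}, not the mechanism that produces the $1/6$-power term with the $N_0=\min(M,N)$ dependence. That $N_0$-dependence is exactly what the double large sieve delivers (the spacing counts are symmetric in which variable is short), and without it you have no route to the term $(FX^5N^{-1}N_0^{-1})^{1/6}$. In short: good first step, but the core tool is missing and the claimed term-by-term reduction to Lemma \ref{lem9} is not correct.
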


 \begin{proof}
\cite[Lemma 9]{bakwein}.
 \end{proof}
 
 \begin{lem}\label{lem11}
Let $|a_m| \le 1$, $|b_n| \le 1$. Let
 \[S = \sum_{m\sim M} a_m \sum_{n\sim N} b_ne
 (Bm^\b n^\a)\]
where $M \ge 1$, $N\ge 1$, $\a(\a-1)(\a-2)\b(\b-1)(\b-2)\ne 0$. Suppose that
 \[F := BM^\b N^\a \gg X.\]
Then
 \begin{align}
SX^{-\eta} &\ll F^{1/20} N^{19/20} M^{29/40} + F^{3/46} N^{43/46} M^{16/23}\label{eq2.2}\\[2mm]
&+ F^{1/10} N^{9/10} M^{3/5} + F^{3/28} N^{23/28} M^{41/56}\notag\\[2mm]
&+ F^{1/11} N^{53/66}M^{17/22}+ F^{2/21} N^{31/42} M^{17/21}\notag\\[2mm]
&+ F^{1/5} N^{7/10} M^{3/5} + N^{1/2}M + F^{1/8} (NM)^{3/4}.\notag
 \end{align}
 \end{lem}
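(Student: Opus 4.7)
The plan is to treat $S$ as a Type II bilinear exponential sum via the Weyl--van der Corput procedure: apply Cauchy's inequality in one variable to eliminate its coefficient, then estimate the resulting differenced one-dimensional exponential sum in the other variable via a suite of exponent-pair and higher-derivative bounds. Each of the nine terms on the right of \eqref{eq2.2} corresponds to a different choice of input at the second step.

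First I apply Cauchy-Schwarz in the outer variable $m$. Writing $T_m = \sum_{n\sim N} b_n e(Bm^\beta n^\alpha)$,
\[|S|^2 \le M \sum_{m\sim M} |T_m|^2 = M \sum_{n_1, n_2 \sim N} b_{n_1}\overline{b_{n_2}}\, U(n_1, n_2),\]
where $U(n_1, n_2) = \sum_{m\sim M} e(Bm^\beta(n_1^\alpha - n_2^\alpha))$. The diagonal $n_1 = n_2$ contributes $O(M^2 N)$, producing the term $N^{1/2}M$ in \eqref{eq2.2}. In the off-diagonal I parametrize $n_1 = n_2 + r$ with $0 < |r| < N$; the hypothesis $\alpha(\alpha-1)(\alpha-2)\beta(\beta-1)(\beta-2)\ne 0$ ensures that the phase $\phi(m) = Bm^\beta((n_2+r)^\alpha - n_2^\alpha)$ has derivatives $\phi^{(j)}(m) \asymp FrM^{-j}N^{-1}$ for each $j \ge 1$, so $U(n_2+r, n_2)$ is a well-behaved one-dimensional exponential sum to which Lemmas \ref{lem2}--\ref{lem8} apply directly.

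Second, I estimate $U(n_2+r, n_2)$ using several different inputs. A generic exponent pair $(\kappa, \lambda)$ gives $U(n_2+r, n_2) \ll (Fr/N)^\kappa M^\lambda$; summation in $n_2$ and in $r$ via $\sum_{r \le N} r^\kappa \ll N^{\kappa+1}$ yields
\[|S|^2 \ll M^2 N + M^{1+\lambda} N^2 F^\kappa, \qquad |S| \ll M N^{1/2} + M^{(1+\lambda)/2} N F^{\kappa/2}.\]
Specializing $(\kappa, \lambda)$ to pairs produced by Lemma \ref{lem5} for various $\ell$, or by iterating the $A$ and $B$ processes of Lemmas \ref{lem3} and \ref{lem4}, accounts for several middle terms of \eqref{eq2.2}. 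The remaining terms follow by replacing the generic exponent-pair bound on $U(n_2+r, n_2)$ by the deeper single-sum estimates of Huxley (Lemma \ref{lem6}), Bourgain (Lemma \ref{lem7}), and Heath-Brown (Lemma \ref{lem8}); the small $(MN)^\eta$ losses are absorbed into the overall $X^\eta$ factor on the left of \eqref{eq2.2}. The term $F^{1/8}(MN)^{3/4}$ is the classical van der Corput Type II bound, which I obtain by applying the $B$-process (Lemma \ref{lem4}) after a second round of Cauchy-Schwarz on the inner $|U|^2$, yielding a third-order differencing whose resulting sum is bounded by trivial stationary phase.

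The main obstacle is bookkeeping rather than any single step. First, the iterated derivatives of $\phi$ (in $m$, and in $n_2$ when Lemma \ref{lem9}-type arguments are applied in the $n_2$-direction) must remain of the predicted order; this is ensured precisely by the nonvanishing of $\alpha(\alpha-1)(\alpha-2)\beta(\beta-1)(\beta-2)$. Second, the hypothesis of Lemma \ref{lem6} demands that the $r$-sum be reorganized as a sum over well-spaced frequencies $y_h = r/N$ with $y_{j+1}-y_j \gg H^{-1}$; for this I dyadically decompose the $r$-range and use a generic spacing argument. Third, the trivial regime where $|r|$ is small (where the linearization $g_r(n_2) \asymp rN^{\alpha-1}$ is imprecise) must be separated and handled by a direct estimate, contributing negligibly on account of its small size. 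The hypothesis $F \gg X$ is then used to verify that each of the nine terms is below $MN$ in the relevant range.
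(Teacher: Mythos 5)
The paper does not prove Lemma~\ref{lem11}: it is a citation of a theorem of Sargos and Wu, with the reader referred to \cite[proof of Theorem 3]{bakwein2} for details. Your proposal therefore has to be judged on whether its outline could plausibly yield the stated nine-term bound, and it cannot.

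The central difficulty is that your recipe --- Cauchy--Schwarz in $m$, Weyl differencing over $n_1=n_2+r$, and then one-dimensional estimates (exponent pairs, or Lemmas~\ref{lem6}--\ref{lem8}) on the resulting sum over $m$ --- always returns a bound of the shape $|S|\ll MN^{1/2}+F^{\kappa/2}M^{(1+\lambda-\kappa)/2}N$, i.e.\ the non-diagonal contribution carries a full factor $N$ (and, after the Cauchy step, a factor $M$ unless the $m$-sum is estimated nontrivially; note also that you dropped an $M^{-\kappa}$ from the exponent-pair bound, since $\phi'(m)\asymp Fr/(MN)$, not $Fr/N$). Yet six of the nine terms in \eqref{eq2.2} have \emph{both} the $M$- and the $N$-exponent strictly below one (for instance $F^{1/20}N^{19/20}M^{29/40}$, $F^{3/46}N^{43/46}M^{16/23}$, $F^{1/10}N^{9/10}M^{3/5}$). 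No choice of exponent pair or single-variable derivative bound inserted into your scheme can depress both exponents simultaneously; the best your scheme can do after a one-dimensional optimization over the shift length $R$ still leaves $M$ (or $N$, if you Cauchy in $n$ instead) at exponent $1$. Even the classical term $F^{1/8}(MN)^{3/4}$ does not emerge from the computation as you describe it: the $B$-process applied to the differenced $m$-sum gives $F^{1/4}M^{1/2}N$ after summing over $r<N$, not $F^{1/8}(MN)^{3/4}$.

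What is actually needed, and what Sargos--Wu and \cite{bakwein2} use, is a genuinely two-dimensional estimate: after the first Cauchy and Weyl differencing one applies the $B$-process (Poisson summation) in the remaining sum and then appeals to a large-sieve/spacing inequality --- the double large sieve of Bombieri--Iwaniec, which is the subject of \cite{bakwein} and underlies Theorem~3 of \cite{bakwein2} --- to extract savings in \emph{both} variables at once. The various terms of \eqref{eq2.2} arise from the different regimes in that analysis and from the different exponent-pair and Robert--Sargos-type inputs there, not from substituting Lemmas~\ref{lem6}--\ref{lem8} into the one-dimensional differenced sum. Your proposal is missing this key idea, so it does not establish the lemma.
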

 
 \begin{proof}
This is due to Sargos and Wu \cite{sarwu}. Full details are given in \cite[proof of Theorem 3]{bakwein2}.
 \end{proof}
 
 \begin{lem}\label{lem12}
Let $0 < B < K$ and $|c_n| \le 1$. Let
 \[V(x) = \sum_{\frac X8 < n\le X} c_ne(n^cx) \ \text{ or } \
 \sum_{\frac X8 < n\le X} c_ne([n^c]x).\]
Then
 \begin{align*}
\ds \int_B^{2B} |V(y)|^2dy &\ll XB + X^{2-c}\mc L,\tag{i}\\[2mm]
\ds \int_B^{2B} |V(y)|^4dy &\ll (X^2B + X^{4-c}) X^\eta \ (c > 2).\tag{ii}
 \end{align*}
 \end{lem}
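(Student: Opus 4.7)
For part (i), the plan is to square out and integrate term-by-term. Writing $\alpha(n) = n^c$ or $[n^c]$ according to which of the two sums defines $V$, one has
\[
\int_B^{2B} |V(y)|^2 \, dy = \sum_{X/8 < m,n \le X} c_m \bar c_n \int_B^{2B} e\bigl((\alpha(m)-\alpha(n)) y\bigr)\,dy,
\]
and each inner integral is $\ll \min(B, |\alpha(m)-\alpha(n)|^{-1})$. The diagonal $m=n$ contributes $\ll X B$. For the off-diagonal, the mean value theorem gives $|m^c-n^c| \asymp X^{c-1}|m-n|$; in the integer-parts case the same estimate holds because for $m \ne n$ in $(X/8, X]$ and $c>1$ we have $|m^c-n^c| \gg X^{c-1} \gg 1$, so $[m^c]-[n^c] \asymp m^c-n^c$. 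Grouping by $d = |m-n|$, which gives $O(X)$ pairs for each $d = 1, \dots, X$, we have to estimate
\[
\sum_{d=1}^{X} X \,\min\!\left(B, \frac{X^{1-c}}{d}\right).
\]
Splitting at $d_0 = X^{1-c}/B$ yields $X B \cdot d_0 = X^{2-c}$ from the small-$d$ part and $X^{2-c}\,\mathcal{L}$ from the large-$d$ part (via $\sum_{d > d_0} 1/d \ll \mathcal{L}$). Combined with the diagonal, this gives (i).

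For part (ii) the same expansion gives
\[
\int_B^{2B} |V(y)|^4\, dy \ll \sum_{m_1,m_2,n_1,n_2} \min\bigl(B, |\Psi|^{-1}\bigr),
\]
where $\Psi = \alpha(m_1)+\alpha(m_2)-\alpha(n_1)-\alpha(n_2)$ and all variables lie in $(X/8, X]$. The key input is a Robert--Sargos type bound: for $c > 2$ and $\Delta \ge 1$,
\[
N(\Delta) := \#\{(m_1,m_2,n_1,n_2) : m_i, n_i \sim X,\ |m_1^c + m_2^c - n_1^c - n_2^c| < \Delta\} \ll X^{2+\eta} + X^{4-c+\eta}\Delta.
\]
For the integer-parts variant, $\Psi = m_1^c + m_2^c - n_1^c - n_2^c + O(1)$, so the same bound holds with $\Delta$ replaced by $\Delta + O(1)$, which is harmless as long as we restrict to $\Delta \gg 1$. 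The contribution of $|\Psi| = 0$ (i.e.\ diagonal in the smooth case; analogously in the integer-parts case, since $|m^c - n^c| \gg 1$ forces $\{m_1,m_2\}=\{n_1,n_2\}$) is $\ll X^2 B$. Splitting the remaining range $1 \le |\Psi| \ll X^c$ dyadically into $|\Psi| \in [\Delta, 2\Delta]$, the count is $\ll X^{2+\eta} + X^{4-c+\eta}\Delta$ and each term contributes $\min(B, 1/\Delta)$, giving
\[
\sum_{\Delta\ \text{dyadic},\ 1 \le \Delta \ll X^c} \min(B, 1/\Delta)\bigl(X^{2+\eta} + X^{4-c+\eta}\Delta\bigr) \ll X^{2+\eta} B + X^{4-c+\eta}\mathcal{L},
\]
absorbing the log into $X^\eta$. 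Adding the diagonal yields the stated bound.

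\textbf{Main obstacle.} Part (i) is purely orthogonality and the elementary spacing $|m^c - n^c| \asymp X^{c-1}|m-n|$. The real substance is part (ii), and the essential ingredient there is the Robert--Sargos mean value bound for the inequality $|m_1^c + m_2^c - n_1^c - n_2^c| < \Delta$; once this is quoted, the rest is a dyadic decomposition. A minor subtlety is handling the integer-parts variant $[n^c]$, but the $O(1)$ perturbation of $\Psi$ and the fact that $X^{c-1} \gg 1$ for $c>2$ mean the smooth and integer-parts cases can be treated uniformly.
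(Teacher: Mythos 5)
Your proof follows essentially the same route as the paper: part (i) by orthogonality and the elementary spacing $|m^c-n^c| \asymp X^{c-1}|m-n|$, part (ii) by expanding, invoking the Robert--Sargos fourth-moment count for $|m_1^c+m_2^c-n_1^c-n_2^c| \le \Delta$, and summing dyadically. The paper cites \cite[Theorem 2]{robsar} for exactly the bound you state.

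One small slip worth flagging: your handling of the near-diagonal range in (ii) is not quite right. You claim that $\Psi = 0$ forces $\{m_1,m_2\} = \{n_1,n_2\}$, justified by $|m^c-n^c| \gg 1$; but for $c>2$ one can easily have $m_1^c+m_2^c-n_1^c-n_2^c = O(1)$ with distinct quadruples, since the two large differences $m_1^c-n_1^c$ and $n_2^c-m_2^c$ (each $\asymp X^{c-1}$) may cancel to within $O(1)$. Moreover your dyadic split starts at $|\Psi|\ge 1$, leaving the range $0 < |\Psi| < 1$ unaddressed in the smooth case. Neither issue is fatal: Robert--Sargos at $\Delta = O(1)$ gives the count $\ll X^{2+\eta}$ for the entire range $|\Psi| \le O(1)$, whose contribution is then $\ll B X^{2+\eta}$, which is what you want. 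This is precisely how the paper proceeds — its first dyadic block $W_1$ covers $|\Psi| \le 4$ and is bounded directly by Robert--Sargos, with no appeal to a ``no nontrivial solutions'' claim.

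One further remark: the paper phrases the integer-part case by writing the fourth-moment sum as $\sum \min(B, |n_1^c+n_2^c-n_3^c-n_4^c+\theta|^{-1})$ with $\theta \in (-2,2)$ depending on the $n_i$, which is a cleaner way of encoding the $O(1)$ perturbation you describe; your treatment is equivalent.
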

 
 \begin{proof}
(i) It suffices to give the details for
 \begin{equation}\label{eq2.3}
V(x) = \sum_{\frac X8 < n\le X} c_ne([n^c]x).
 \end{equation}
The left-hand side in (i) is
 \begin{gather*}
\int_B^{2B} \Bigg\{\sum_{\frac X8 < n\le X} |c_n|^2 + 2\sum_{\substack{
\frac X8 < n, n + j \le X\\
j \ne 0}} c_n \bar c_{n+j} e(([n^c] - [(n+j)^c])x)dx\Bigg\}\\[2mm]
\ll XB + \sum_{\frac X8 < n\le X} \ \sum_{j \le X} \ \frac 1{jX^{c-1}} \ll XB + X^{2-c}\mc L,
 \end{gather*}
since (assuming $X$ is large and $j > 0$) $[(n+j)^c] - [n^c] = (n+j)^c - n^c + O(1) \asymp jn^{c-1}$.
 \medskip

(ii) Again, we give details only for \eqref{eq2.3}. The left-hand side in (ii) is

 \begin{align*}
&\sum_{\frac X8 < n_j \le X \, (1\le j \le 4)} c_{n_1} c_{n_2} \bar c_{n_3} \bar c_{n_4} \int_B^{2B} e(([n_1^c] + [n_2^c] - [n_3^c] - [n_4^c])x)dx\\[2mm]
&\hskip .5in \le \sum_{\frac X8 < n_j\le X\, (j=1,\ldots, 4)} \min \left(B, \frac 1{|n_1^c + n_2^c - n_3^c - n_4^c + \t|}\right)\\[2mm]
&\hskip .5in = \sideset{}{_1}\sum \ , \ \text{ say.}
 \end{align*}
Here $\t$ depends on the $n_i$, $\t \in (-2,2)$. The number of terms in the sum with
 \[|n_1^c + n_2^c - n_3^c - n_4^c| \le 4^j\]
is
 \[\ll X^{\eta/4}(X^{4-c} 4^j + X^2)\]
for $4^j \ll X^c$, by \cite[Theorem 2]{robsar}. Thus
 \[\sideset{}{_1}\sum = \sum_{4^j \ll X^c} W_j\]
with $W_1$ corresponding to
 \[|n_1^c + n_2^c - n_3^c - n_4^c| \le 4\]
and $W_j$ corresponding to
 \[4^{j-1} < |n_1^c + n_2^c - n_3^c - n_4^c| \le 4^j.\]
We see that
 \[W_1 \ll X^{\eta/2}(X^2 + X^{4-c}) B 
 \ll X^{2+\eta/2}B\]
while for $j \ge 2$,
 \[W_j \ll X^{\eta/2} \min(4^{-j}, B)
 (X^2 + X^{4-c} 4^j).\]
The desired bound follows at once.
 \end{proof}
 
The following result abstracts the idea of Cai mentioned in Section \ref{sec:intro}.

 \begin{lem}\label{lem13}
Let $\mu$ be a complex Borel measure on $[X^{1-c}, K]$. Let $\l_1,\ldots,\l_N \in \mb R$. Let $a_n\left(\frac X8 < n \le X\right)$ be real numbers,
 
 \[\mc S(x) = \sum_{\frac X8 < n\le X} a_ne(\l_nx), \ \mc J(x) =
 \sum_{\frac X8 < n\le X} e(\l_nx).\]
Then
 \bigskip

\noindent (i) $\ds\left|\int_{X^{1-c}}^K \mc S(x) d\mu(x)\right|^2 \ll \bigg(\sum_{\frac X8 < n\le X} |a_n|^2\bigg) \int_{X^{1-c}}^K d\bar\mu(y) \int_{X^{1-c}}^K \mc J(x-y)d\mu(x)$.
 \bigskip

\noindent (ii) Suppose further that $a_n \ll \mc L$ and for some $U > 0$,
 \begin{equation}\label{eq2.4}
\mc J(x) \ll U + \mc L X^{1-c}|x|^{-1}\quad (0 < |x| \le 2K).
 \end{equation}
Then for any Borel measurable bounded function $G$ on $[X^{1-c}, K]$ we have
 \begin{align*}
\left|\int_{X^{1-c}}^K \mc S(x) G(x) dx\right|^2 &\ll \mc L^4 X^{2-c}\int_{X^{1-c}}^K |G(x)|^2dx\\[2mm]
&+ UX \mc L^2 \left(\int_{X^{1-c}}^K |G(x)|dx\right)^2. 
 \end{align*}
 \end{lem}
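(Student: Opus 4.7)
Part (i) is a direct application of Cauchy--Schwarz, and part (ii) follows from (i) by specialising $d\mu(x) = G(x)\,dx$ and bounding a weighted double integral of $\mc J(x-y)$ via a two-region split on $|x-y|$.

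For (i), I would expand $\mc S$ inside the integral,
\[\int_{X^{1-c}}^{K}\mc S(x)\,d\mu(x) = \sum_{\frac X8 < n \le X} a_n \int_{X^{1-c}}^{K} e(\l_n x)\,d\mu(x),\]
apply Cauchy--Schwarz in $n$ to peel off $\sum_n |a_n|^2$, expand the remaining squared modulus as a double integral against $d\mu(x)\,d\bar\mu(y)$, and move the (finite) sum over $n$ back inside. This recovers $\sum_n e(\l_n(x-y)) = \mc J(x-y)$ and yields the stated identity.

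For (ii), apply (i) with $d\mu(x) = G(x)\,dx$; since $a_n \ll \mc L$ and the sum has $O(X)$ terms, $\sum |a_n|^2 \ll X\mc L^2$. The task reduces to showing
\[W := \int_{X^{1-c}}^{K}\!\int_{X^{1-c}}^{K}\mc J(x-y)\,G(x)\,\bar G(y)\,dx\,dy \ll \mc L^2 X^{1-c}\int|G|^2 + U\Big(\int|G|\Big)^2,\]
after which multiplication by $X\mc L^2$ yields the claim. The hypothesis \eqref{eq2.4} gives $\mc J(x-y) \ll U + \mc L X^{1-c}|x-y|^{-1}$ off the diagonal, but this is useless for $|x-y|$ small; I combine it with the trivial bound $\mc J(t) \ll X$ and split at $|x-y| = X^{-c}$. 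On $\{|x-y|\le X^{-c}\}$, use $\mc J\ll X$ together with the AM--GM inequality $|G(x)\bar G(y)|\le \tfrac12(|G(x)|^2+|G(y)|^2)$ and the fact that the slab has $y$-width $\ll X^{-c}$, giving a contribution $\ll X^{1-c}\int|G|^2$. On $\{|x-y|>X^{-c}\}$, use $\mc J(x-y)\ll \mc L X^{1-c}|x-y|^{-1}$; AM--GM combined with $\int_{X^{-c}}^{2K}dt/t\ll \mc L$ gives $\ll \mc L^2 X^{1-c}\int|G|^2$. The $U$-piece in the bound for $\mc J$ contributes at most $\int\!\int U|G(x)||G(y)|\,dx\,dy = U(\int|G|)^2$, supplying the second term.

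The one delicate point is that hypothesis \eqref{eq2.4} degenerates at $x=y$, forcing one to inject the trivial bound $\mc J\ll X$ near the diagonal; the threshold $X^{-c}$ is chosen precisely so that the contribution $X\cdot X^{-c}$ from the near-diagonal slab matches the $\mc L\cdot X^{1-c}$ from the integrated tail, giving the single term $\mc L^2 X^{1-c}\int|G|^2$ (with one of the two logs coming from the tail integration $\int dt/t$). Everything else is mechanical expansion and application of Fubini.
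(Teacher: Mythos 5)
Your proposal is correct and follows essentially the same approach as the paper: part (i) by Cauchy--Schwarz in $n$ and expanding the squared modulus as a double integral to recover $\mc J(x-y)$, and part (ii) by specialising $d\mu = G\,dx$, using $\sum|a_n|^2 \ll X\mc L^2$, bounding $\mc J$ by $U + \mc L\min(X,X^{1-c}/|x-y|)$, then AM--GM with the split at $|x-y|=X^{-c}$. The only cosmetic difference is that the paper folds the trivial bound into the single $\min$ and splits the inner $y$-integral afterward, while you split the double integral into near- and off-diagonal regions first; the computation is identical.
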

 
 \begin{proof}
(i) We have (summations over $n$ corresponding to $\frac X8 < n \le X$)
 \begin{align*}
\left|\int_{X^{1-c}}^K \mc S(x) d\mu(x)\right| &= \sum_n a_n \int_{X^{1-c}}^K e(\l_nx)d\mu(x)\\[2mm]
&\le \sum_n |a_n| \left|\int_{X^{1-c}}^K e(\l_nx) d\mu(x)\right|.
 \end{align*}
By Cauchy's inequality,
 \begin{align*}
\left|\int_{X^{1-c}}^K \mc S(x)d\mu(x)\right|^2 &\le \sum_n |a_n|^2 \sum_n \left|\ \int_{X^{1-c}}^K e(\l_nx) d\mu(x)\right|^2\\[2mm]
&= \sum_n |a_n|^2 \sum_n \int_{X^{1-c}}^K e(\l_nx)d\mu(x) \int_{X^{1-c}}^K e(-\l_ny)d\bar\mu(y)\\[2mm]
&= \sum_n |a_n|^2 \sum_n \ \int_{X^{1-c}}^K d\bar\mu(y) \int_{X^{1-c}}^K \mc J(x-y)d\mu(x). 
 \end{align*}
(ii) We apply (i) with $d\mu(x) = G(x)dx$. The right-hand side is
\newpage
 \begin{align*}
&\ll X\mc L^2 \int_{X^{1-c}}^K |G(y)| dy \int_{X^{1-c}}^K |G(x)| U\, dx\\[2mm]
&+ X\mc L^3 \int_{X^{1-c}}^K |G(y)| \int_{X^{1-c}}^K |G(x)| \min \left(X, \frac{X^{1-c}}{|x-y|}\right) dx\, dy.
 \end{align*}
It now suffices to show that
 \begin{align}
\int_{X^{1-c}}^K |G(y)| \int_{X^{1-c}}^K |G(x)| &\min \left(X, \frac{X^{1-c}}{|x-y|}\right) dx\, dy\label{eq2.5}\\[2mm]
&\ll X^{1-c} \mc L \int_{X^{1-c}}^K |G(x)|^2dx\notag.
 \end{align}
The left-hand side of \eqref{eq2.5} is
 \begin{align*}
&\le \frac 12 \int_{X^{1-c}}^K \int_{X^{1-c}}^K (|G(x)|^2 + |G(y)|^2) \min \left(X, \frac{X^{1-c}}{|x-y|}\right)dx\, dy\\[2mm]
&\quad = \int_{X^{1-c}}^K |G(x)|^2 \int_{X^{1-c}}^K \min \left(X, \frac{X^{1-c}}{|x-y|}\right)dy\, dx.
 \end{align*}
The contribution to the inner integral from $|y-x| \le X^{-c}$ is $\le 2X^{1-c}$ and the contribution from $|y-x| > X^{-c}$ is $\le 2X^{1-c}\log(KX^{c-1})$. Now \eqref{eq2.5} follows.
 \end{proof}

We write $d(n)$ for the divisor function.

 \begin{lem}\label{lem14}
Let $G$ be a complex function on $[X,2X]$. Let $u \ge 1$, $v$, $z$ be numbers satisfying $u^2 \le z$, $128uz^2 \le X$ and $2^{20}X \le v^3$. Then
 \[\sum_{\frac X8 < n\le X} \Lambda(n) G(n)\]
is a linear combination (with bounded coefficients) of $O(\mc L)$ sums of the form
 \[\sum_m a_m \sum_{\substack{n \ge z\\
 \frac X8 < mn\le X}} \, (\log n)^h G(mn)\]
with $h=0$ or 1, $|a_m| \le d^5(m)$ together with $O(\mc L^3)$ sums of the form
 \newpage
 \[\underset{\frac X8 < mn\le X}{\sum_m a_m \sum_{u \le n \le v}}
  b_n \, G(mn)\]
in which $|a_m| \le d(m)^5$, $|b_n| \le d(n)^5$.
 \end{lem}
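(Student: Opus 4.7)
The plan is to apply Heath--Brown's generalised Vaughan identity with $K=3$ and then decompose the resulting convolutions into Type I and Type II pieces via a dyadic combinatorial analysis.

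First, set $z_0 := (2X)^{1/3}$, so that $z_0 \leq v/2$ by the hypothesis $v^3 \geq 2^{20}X$. For $n \leq 2z_0^3$ (in particular for $X/8 < n \leq X$), Heath--Brown's identity \cite{hb3} reads
\[
\Lambda(n) = \sum_{k=1}^{3} (-1)^{k-1}\binom{3}{k} \sum_{\substack{m_1,\ldots,m_k \leq z_0 \\ m_1\cdots m_k n_1\cdots n_k = n}} \mu(m_1)\cdots\mu(m_k)\log n_1.
\]
Substituting into $\sum_{X/8 < n \leq X}\Lambda(n)G(n)$ gives a bounded linear combination of three sums $T_1,T_2,T_3$, with
\[
T_k = \sum_{\substack{m_1,\ldots,m_k \leq z_0 \\ X/8 < m_1\cdots m_k n_1\cdots n_k \leq X}} \mu(m_1)\cdots\mu(m_k)(\log n_1)\, G(m_1\cdots m_k n_1\cdots n_k).
\]

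\textbf{Type I extraction.} Next I would split each $T_k$ as $T_k^+ + T_k^-$ according to whether $n_1 \geq z$ or $n_1 < z$. The piece $T_k^+$ fits the Type I template directly: take $n := n_1$ (absorbing $\log n_1$ via $(\log n)^h$ with $h=1$) and $m := m_1\cdots m_k n_2\cdots n_k$, so that
\[
a_m = \sum_{\substack{m = m_1\cdots m_k n_2\cdots n_k \\ m_i \leq z_0}} \mu(m_1)\cdots\mu(m_k), \qquad |a_m| \leq d_{2k-1}(m) \leq d(m)^{2k-2} \leq d^5(m)
\]
for $k \leq 3$. This accounts for three Type I sums, well within the $O(\mc L)$ budget.

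\textbf{Vanishing for $k=1$.} The piece $T_1^-$ is empty: if $m_1 \leq z_0$ and $n_1 < z$, then $m_1 n_1 < z_0 z \leq 2^{1/3}X^{1/3}\cdot 2^{-7/2}X^{1/2} = 2^{-19/6}X^{5/6}$, using $z \leq (X/128)^{1/2}$ (from $128uz^2 \leq X$, $u \geq 1$). A direct computation gives $2^{-19/6}X^{5/6} < X/8$ for $X \geq 1$, contradicting $m_1 n_1 > X/8$.

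\textbf{Type II for $k=2,3$.} For $T_k^-$ with $k \in \{2,3\}$, I would dyadically decompose each $m_i$ into $O(\mc L)$ ranges $(M_i,2M_i]$ with $M_i \leq z_0$, and $n_1$ into $O(\mc L)$ ranges $(N_1,2N_1]$ with $N_1 < z$. By partial summation, the factor $\log n_1$ is replaced by the constant $\log(2N_1)$ (absorbed into the bounded-coefficient prefactor), with the $O(1)$ error itself a sum of the same shape. In each dyadic cell, a multiplicative subset-sum argument on $\{m_1,\ldots,m_k,n_1\}$ together with the free variables $n_2,\ldots,n_k$ locates a subset whose product lies in $[u,v]$; this subset is declared the Type II variable $n$, and the complementary product becomes $m$. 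Divisor-convolution bounds then give $|a_m|,|b_n| \leq d^5$. Summing over all dyadic cells and over $k \in \{2,3\}$ yields $O(\mc L^3)$ Type II sums.

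\textbf{Main obstacle.} The central difficulty is the combinatorial subset-sum step: one must verify that the hypotheses $u^2 \leq z$, $128uz^2 \leq X$, and $v^3 \geq 2^{20}X$ encode exactly the condition needed to locate a subset-product in $[u,v]$ in every dyadic cell. A naive greedy multiplication would require $v \geq uz$, which does \emph{not} follow directly from the hypotheses; the argument instead exploits the interplay between $u^2 \leq z$ (preventing small factors from overshooting $[u,v]$ by too much) and $128uz^2 \leq X$ (ensuring the ambient product is large enough to reach $u$). A secondary delicate point is coordinating the $\log n_1$ factor so that the strict divisor bounds on $a_m$ and $b_n$ survive after absorbing the $\log(2N_1)$ prefactor.
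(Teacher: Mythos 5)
The paper disposes of this lemma by citing Heath--Brown \cite[pp.~1367--1368]{hb}, so any acceptable blind proof must in effect reconstruct that argument. Your general framework (Heath--Brown's identity with $K=3$, $z_0=(2X)^{1/3}$, followed by a Type I / Type II dichotomy) is the right one, and two of your steps are sound: the $T_k^+$ extraction with $n:=n_1$, $h=1$, $|a_m|\le d_{2k-1}(m)\le d(m)^5$, and the computation showing $T_1^-=\emptyset$, which is indeed exactly what the hypothesis $128uz^2\le X$ is for.

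However, there is a genuine gap, and you have (to your credit) named it yourself: the combinatorial step that would produce a Type II variable in $[u,v]$ from $T_k^-$ is not proven, and your proposed split is in fact too coarse for such a step to exist. In $T_k^-$ you fix $n_1<z$ but leave $n_2,\dots,n_k$ free, and there are dyadic cells where \emph{no} subset of $\{m_1,\dots,m_k,n_1,n_2,\dots,n_k\}$ has product in $[u,v]$: for instance, take $m_1,\dots,m_k,n_1$ all of size $O(1)$, so their subsets multiply to $O(1)<u$, while every subset containing an $n_j$ with $j\ge 2$ multiplies to at least $n_j\gg X/(8\prod m_i n_1)$, which for $v$ near its permitted minimum $2^{20/3}X^{1/3}$ exceeds $v$. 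Such cells must go to Type I with $n:=n_j$, $j\ge 2$. But then the factor $\log n_1$ (which is not identically zero once $n_1\ge 2$) lands inside $a_m$, and $|a_m|\le d(m)^5$ fails since $\log n_1$ can be of size $\mathcal L$. Your proposal neither makes this further case split nor explains how to neutralise $\log n_1$ when $n_1$ is not the outer smooth variable. A related symptom: your partial-summation remark, replacing $\log n_1$ by the constant $\log(2N_1)$, does not produce a \emph{bounded} coefficient, since $\log(2N_1)\asymp\mathcal L$; and decomposing $m_1,m_2,m_3$ and $n_1$ dyadically already yields $O(\mathcal L^4)$ cells for $k=3$, which you would need to reconcile with the stated $O(\mathcal L^3)$ count. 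In short, the outline is in the right direction, but the crucial combinatorial lemma --- the very content of the cited Heath--Brown passage --- is missing, and the split you chose cannot be completed without it.
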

 
 \begin{proof}
\cite[pp.~1367--1368]{hb}.
 \end{proof}

The following lemma encapsulates the `Harman sieve' in the version we need.

 \begin{lem}\label{lem15}
Let $w(\ldots)$ be a complex function with support on $\left[\frac X8, X\right] \cap \mb Z$, $|w(n)| \le X^{1/\eta}$ for all $n$. For $m \in \mb N$, $z \ge 2$ let
 \[S(m,z) = \sum_{(n,\, P(z))=1} w(mn).\]
Let $\a > 0$, $0 < \b \le 1/2$, $M \ge 1$, $Y > 0$. Suppose that whenever $|a_m| \le 1$, $|b_n| \le d(n)$, we have
 \begin{align}
&\sum_{m \le M} a_m \sum_n w(mn) \ll Y,\label{eq2.6}\\[2mm]
&\sum_{X^\a \le m \le X^{\a+\b}} a_m \sum_n b_n w(mn) \ll Y.\label{eq2.7}
 \end{align}
Let $|u_\ell| \le 1$, $|v_s| \le 1$, for $\ell \le R$, $s \le S$, also $u_\ell = 0$ for $(\ell, P(X^\eta)) > 1$, $v_s= 0$ for $(s, P(X^\eta))> 1$. Suppose that
 \[R < X^\a \ , \ S < MX^{-\a}.\]
Then
 \[\sum_{\ell \le R} \ \sum_{s \le S} u_\ell v_s
 S(\ell s, X^\b) \ll Y\mc L^3.\]
 \end{lem}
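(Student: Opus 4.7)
My plan is to run the standard Harman-sieve argument: expand $S(\ell s, X^\beta)$ by iterated Buchstab identities and fit each resulting summand into \eqref{eq2.6} or \eqref{eq2.7}. First I collapse the outer indices by writing $m = \ell s$ and $T = \sum_m c_m\, S(m, X^\beta)$ with $c_m = \sum_{\ell s = m,\,\ell\le R,\,s\le S} u_\ell v_s$, so that $|c_m| \le d(m)$, $m \le RS < M$, and $c_m$ is supported on $m$ with $(m, P(X^\eta)) = 1$.

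Iterating Buchstab's identity $S(m, X^\beta) = S(m, X^\eta) - \sum_{X^\eta \le p < X^\beta} S(mp, p)$ terminates in $O(1/\eta)$ steps, since each introduced prime costs a factor of at least $X^\eta$ against the bound $mp_1\cdots p_k \le X$. This yields an expansion
\[S(m, X^\beta) = \sum_{0 \le k \le O(1/\eta)} (-1)^k \sum_{X^\eta \le p_k < \cdots < p_1 < X^\beta} S(m p_1 \cdots p_k, X^\eta).\]

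For each resulting term I inspect the partial products $\Pi_j = \ell s p_1 \cdots p_j$. One has $\Pi_0 = \ell s < M$, $\Pi_j / \Pi_{j-1} = p_j < X^\beta$, and $M > X^\alpha$ (from $S \ge 1$ and $S < MX^{-\alpha}$). Since each multiplicative step is by a factor strictly less than $X^\beta$, the sequence $(\Pi_j)$ cannot skip the window $[X^\alpha, X^{\alpha+\beta}]$ once it has crossed $X^\alpha$. Accordingly, either some $\Pi_j$ lies in $[X^\alpha, X^{\alpha+\beta}]$, in which case I apply \eqref{eq2.7} with outer index $\Pi_j$ and inner index $p_{j+1}\cdots p_k n$ (the inner coefficient bound $|b_n| \le d(\cdot)$ absorbs both the divisor-bounded contribution of the remaining primes and the residual sieve condition $(n, P(X^\eta)) = 1$); or $\Pi_j$ stays below $X^\alpha$ throughout, so $\Pi_k < X^\alpha \le M$, and I apply \eqref{eq2.6} with outer index $\Pi_k$ and inner index $n$, after stripping the short-level sieve condition via the fundamental lemma of sieve theory at a cost of $O(\mc L)$.

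The chief obstacle is that the outer coefficient $\sum_{\text{factorizations}} u_\ell v_s\, (-1)^k$ attached to a given $\Pi_j$ is only pointwise bounded by $d_{O(1/\eta)}(\Pi_j)$, not by $1$, so that \eqref{eq2.6} and \eqref{eq2.7} do not apply directly. I resolve this by splitting the ranges of $\ell$, $s$, and each $p_i$ dyadically, so that within each dyadic box the factorization $\Pi_j = \ell s p_1 \cdots p_j$ is essentially unique and the outer coefficient restricted to that box is $O(1)$. The three essential dyadic splittings (in $\ell$, $s$, and the ``critical'' prime $p_j$ at which $\Pi_j$ first crosses $X^\alpha$) produce the factor $\mc L^3$ in the final bound, while the $O(1/\eta)$-many deeper dyadic losses and Buchstab depth levels are absorbed into the implied constant (which is permitted to depend on $\eta$). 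This yields $T \ll Y\mc L^3$ as claimed.
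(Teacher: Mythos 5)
Your overall architecture --- iterate Buchstab's identity on $S(\ell s, X^\b)$, classify each resulting term $S(\ell s p_1\cdots p_k, X^\eta)$ by where the partial products $\Pi_j$ land relative to $[X^\a, X^{\a+\b}]$, and feed into \eqref{eq2.6} or \eqref{eq2.7} --- is the right skeleton of the Harman sieve, and your ``the chain cannot jump the window since each step multiplies by $p_j < X^\b$'' observation is exactly the correct continuity point. (For comparison, the paper itself does not prove this lemma; it simply cites \cite[Lemma 14]{bakwein2}.) But there is a real gap in the Type~II branch. Once $\Pi_j = \ell s p_1\cdots p_j$ is designated the outer variable $m$ and $p_{j+1}\cdots p_k n'$ the inner variable $n$, the Buchstab chain condition $p_{j+1}<p_j$ ties $n$ to an internal datum of $m$ (the prime $p_j$) which is not a function of the value of $m$ alone --- indeed different factorisations of the same $m$ give different thresholds. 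The sum is therefore \emph{not} of the bilinear shape $\sum_m a_m \sum_n b_n\, w(mn)$ required by \eqref{eq2.7}. This is an obstruction to the \emph{separability} of the weight, not to its size, so the allowance $|b_n|\le d(n)$ does nothing for it. Removing cross-conditions of this kind is precisely what requires a decoupling device (a truncated Perron integral, as in Lemma~\ref{lem16}; or a dyadic localisation of $p_j$ together with a separate treatment of the diagonal case $p_j,p_{j+1}$ in the same dyadic block), and your proposal does not supply one; it is also where the logarithmic losses in the Harman sieve actually originate.

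A secondary problem is your accounting of the factor $\mc L^3$. You assert that dyadic restriction of $\ell$, $s$ and the critical prime makes the factorisation $\Pi_j = \ell s p_1\cdots p_j$ ``essentially unique'' so that the outer coefficient becomes $O(1)$; that is false, since a number with several prime factors of comparable size admits a divisor-like number of representations $\ell\cdot s$ even with $\ell$ and $s$ confined to fixed dyadic boxes (the dyadic split of the ordered $p_i$ does nothing for the $\ell,s$ ambiguity). What in fact keeps the coefficient bounded is the support condition: $\ell$, $s$ and each $p_i$ are coprime to $P(X^\eta)$, so $\Pi_j$ has $O(1/\eta)$ prime factors and hence $O_\eta(1)$ admissible factorisations --- no dyadic splitting needed, and no logarithms arise from it. The $\mc L^3$ must instead be charged to the decoupling, the fundamental-lemma stripping of $(n,P(X^\eta))=1$, and the sum over Buchstab depths. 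Two further loose ends: your case split is not exhaustive when $\ell s\ge X^{\a+\b}$, which the hypotheses $R<X^\a$, $S<MX^{-\a}$ permit when $M>X^{\a+\b}$; and the fundamental lemma in the Type~I branch multiplies $\Pi_k$ by a sieve modulus $d\le X^{\eta'}$, so you need $\Pi_k<MX^{-\eta'}$, which does not follow from $\Pi_k<X^\a<M$ without a margin.
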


 \begin{proof}
\cite[Lemma 14]{bakwein2}.
 \end{proof}

 \begin{lem}\label{lem16}
Let $\a > 0$, $0 < \b \le 1/2$, $Y > 1$, $y > 0$. Suppose that whenever $|a_m| \le 1$, $|b_n| \le 1$ we have 
 \[S := \sum_{X^\a \le m \le X^{\a+\b}} \
 \sum_{\frac X8 < mn\le X} b_n e(y(mn)^c) \ll Y.\]
Let $|a_m| \le 1$, $|b_n| \le 1$. Let
 \[S_1 = \sideset{}{^*}\sum_{p_1,\ldots, p_s}\ \ 
 \underset{\substack{X^\a \le mp_1\ldots p_r \le X^{\a+\b}\\[1mm]
 \frac X8 < mn\, p_{r+1}\ldots p_s \le X}}{\sum a_m \sum b_n} e(y(mn\, p_1\ldots p_s)^c)\]
where the asterisk indicates that $X^\eta \le p_1 < p_2 < \cdots < p_r$ together with no more than $\eta^{-1}$ conditions of the form
 \[A(\mc F) \le \prod_{j\in \mc F} p_j \le B(\mc F)\]
$(\mc F \subseteq \{1, \ldots, s\}.)$ Then
 \[S_1 \ll YX^\eta.\]
Corresponding bounds hold when $S$, $S_1$ are replaced by sums containing (e.g.) $[(mn)^c]$ in place of $(mn)^c$.
 \end{lem}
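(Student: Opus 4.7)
The strategy is a dyadic decomposition of each prime variable $p_1,\ldots,p_s$, followed by absorption of the primes into the bilinear structure, which reduces matters to the hypothesis bounding $S$.

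First I would split each $p_i$ into a dyadic range $P_i < p_i \le 2P_i$, with $P_i$ ranging over powers of $2$ between $X^\eta$ and $X$. This decomposes $S_1$ into $O(\mc L^s)$ subsums, one for each tuple $(P_1,\ldots,P_s)$. Within each dyadic box, the strict ordering $p_1 < \cdots < p_r$ and the at most $\eta^{-1}$ conditions $A(\mc F) \le \prod_{j\in\mc F} p_j \le B(\mc F)$ split the box into a bounded number of further pieces: each such condition is either automatically satisfied on the entire box, automatically violated, or active on a single $p_j$ (in which case we subdivide that one variable into $O(1)$ intervals). Consequently the overall count $O(\mc L^s)$ of subsums is preserved.

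Next, in a fixed subbox I would set $m' = m p_1\cdots p_r$ and $n' = n p_{r+1}\cdots p_s$, so that the phase $e(y(mn p_1\cdots p_s)^c)$ becomes $e(y(m'n')^c)$, the condition $X^\alpha \le m p_1\cdots p_r \le X^{\alpha+\beta}$ becomes $X^\alpha \le m' \le X^{\alpha+\beta}$, and $\tfrac{X}{8} < mn p_1\cdots p_s \le X$ becomes $\tfrac{X}{8} < m'n' \le X$. Writing
\[
\tilde a_{m'} = \sum_{\substack{m p_1\cdots p_r = m'\\ p_i\in(P_i,2P_i],\ \text{prime}}} a_m,\qquad \tilde b_{n'} = \sum_{\substack{n p_{r+1}\cdots p_s = n'\\ p_i\in(P_i,2P_i],\ \text{prime}}} b_n,
\]
each decomposition of $m' \le X$ into an ordered product of $r$ distinct primes and a cofactor is counted at most $d(m')^r$ times, so $|\tilde a_{m'}| \ll d(m')^r \ll X^{\eta/10}$; similarly $|\tilde b_{n'}| \ll X^{\eta/10}$.

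Normalising by $X^{\eta/10}$ produces coefficients in $[-1,1]$, so the hypothesis on $S$ applies to the rearranged sum and yields a bound of $X^{\eta/5} Y$ per subbox. Summing over the $O(\mc L^s)$ subboxes and absorbing the logarithmic factor into $X^\eta$ gives $S_1 \ll Y X^\eta$. The argument uses the phase only through the identity $(mn p_1\cdots p_s)^c = (m'n')^c$, which persists under $[\,\cdot\,]$, so the integer-part version follows by the same reasoning. The only real piece of care—rather than a genuine obstacle—is verifying in the first step that the product constraints together with the strict ordering contribute only a bounded multiplicative factor beyond the $O(\mc L^s)$ dyadic boxes, so that the total loss can indeed be absorbed into a single $X^\eta$.
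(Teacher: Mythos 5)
There is a genuine gap, located precisely at the point you flag as merely a ``piece of care'': the handling of the constraints $A(\mc F) \le \prod_{j\in\mc F} p_j \le B(\mc F)$. The lemma permits $\mc F$ to contain indices from both $\{1,\ldots,r\}$ and $\{r+1,\ldots,s\}$, i.e., primes that land on opposite sides of your bilinear split $m' = m p_1\cdots p_r$, $n' = n p_{r+1}\cdots p_s$. Your claim that on a dyadic box each such condition is either trivially true, trivially false, or ``active on a single $p_j$'' fails as soon as $|\mc F| \ge 2$: on a partially active box (one where $A(\mc F)$ or $B(\mc F)$ lies strictly inside the product range $\bigl(\prod_{j\in\mc F}P_j,\ 2^{|\mc F|}\prod_{j\in\mc F}P_j\bigr]$) the admissible set of prime tuples is a slab between two hyperbolic surfaces and cannot be covered by $O(1)$, or indeed by boundedly many, axis-parallel boxes. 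When $\mc F$ lies entirely on one side of the split this is harmless, since the constraint can simply be folded into the definition of $\tilde a_{m'}$ or $\tilde b_{n'}$; but when $\mc F$ crosses the split, a partially active constraint genuinely couples $m'$ to $n'$, and the rearranged sum is \emph{not} of the bilinear form $\sum_{m'}\tilde a_{m'}\sum_{n'}\tilde b_{n'}\,e(y(m'n')^c)$ to which the hypothesis on $S$ applies.

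The paper's proof resolves exactly this via the truncated Perron formula: each sharp constraint indicator $\mathbf 1\bigl[A(\mc F)\le\prod_{j\in\mc F}p_j\le B(\mc F)\bigr]$ is replaced by $\frac{1}{\pi}\int_{-T}^T e^{iut}\,\frac{\sin vt}{t}\,dt$ plus an error made negligible by a suitable choice of $T$, with $u = \sum_{j\in\mc F}\log p_j - \tfrac12\log\bigl(A(\mc F)B(\mc F)\bigr)$ and $v = \tfrac12\log\bigl(B(\mc F)/A(\mc F)\bigr)$. The crucial point is that $e^{iut}$ factors as $\prod_{j\in\mc F}p_j^{it}$ times a constant, a product over the individual prime variables. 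Interchanging the $t$-integral with the sums thus gives, for each fixed $t$, a genuine bilinear sum in $m'$ and $n'$ whose coefficients absorb the modulus-one weights $p_j^{it}$ and remain $\ll X^{\eta/2}$; the hypothesis on $S$ then applies pointwise in $t$, and integrating over $t$ yields the stated bound with a loss of only $X^{O(\eta)}$. Dyadic decomposition alone cannot accomplish this decoupling of cross-split constraints, so the argument as written does not close.
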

 
 \begin{proof}
This is a variant of Lemma 10 of \cite{bakharriv}. Each condition implied by $\ast$ can be removed using repeatedly the truncated Perron formula
 \[\frac 1\pi \int_{-T}^T e^{i\a t}\ \frac{\sin t\b}t\, dt  
 = \begin{cases}
 1 + O(T^{-1}(\b-|\a|)^{-1} & \text{if $|\a| \le \b$}\\[2mm]
 O(T^{-1}(|\a| - \b)^{-1}) & \text{if $\a > \b$}.
 \end{cases}\]
We can keep the error term negligible by suitable choice of $T$, the main term being a multiple integral of a multiple sum with coefficients of absolute value at most $X^{\eta/2}$, and with no interaction between the summation variables. For more details see \cite[pp. 270--272]{bakharriv}.
 \end{proof}
 \bigskip
 
 \section{The minor arcs: small $x$ and large $x$.}\label{sec:minorarcs}
 
We can disregard the contribution to the minor arc from $x > K$ by \eqref{eq1.2}. In the present section we show that
 \begin{align}
\int_\tau^{X^{1-c}} |U(x)|^s |\Phi(x)|dx &\ll X^{s-c-3\eta}\ (3 \le s \le 4)\label{eq3.1}\\
\intertext{and}
\int_\tau^{X^{2-c}} |U(x)|^5 |\Phi(x)|ds &\ll X^{5-c-3\eta} \quad (s=5),\label{eq3.2}
 \end{align}
where $U(x)$ is any of $A(x)$, $B(x)$, $S(x)$, $S_1(x)$, $S^+(x)$, $T(x)$, $T^+(x)$. For Theorems \ref{thm1}--\ref{thm6}, this takes care of the (positive) left-hand part of the minor arc. (We need no separate discussion for the part of the minor arc in $(-\infty, -\tau)$, here or later.) This is not quite obvious for $s=2$, but see the discussion at the beginning of Section \ref{sec:minorarcthms12}.

 \begin{lem}\label{lem17}
Let $c < 2.1$ and $x \in (\tau, X^{2-c}]$. Let
 \begin{align}
V(x) &= \underset{\frac X8 < mn\le X}{\sum_{m \sim M} \ \sum_{n\sim N}}\ b_m c_n e(x(mn)^c)\label{eq3.3}
\intertext{or}
V(x) &= \underset{\frac X8 < mn\le X}{\sum_{m \sim M} \ \sum_{n\sim N}}\ b_m c_n e(x[(mn)^c]),\label{eq3.4}
 \end{align}
where $|b_m| \le 1$, $|c_n| \le 1$. Then
 \begin{equation}\label{eq3.5}
V(x) \ll X^{1-3\eta} \ \ \text{whenever } X^{10\eta}
\ll N \ll X^{1/2}. 
 \end{equation}
The bound \eqref{eq3.5} also holds when $b_n=1$ for all $n$ and $n \gg X^{1-10\eta}$.
 \end{lem}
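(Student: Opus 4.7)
The plan is to reduce the floor-function form \eqref{eq3.4} to the smooth form \eqref{eq3.3} via Lemma~\ref{lem1} and then apply the Sargos--Wu bilinear bound (Lemma~\ref{lem11}). For \eqref{eq3.4}, I would apply Lemma~\ref{lem1} with $H=X^{10\eta}$, treating the combined product $b_mc_n$ as the coefficient $a_{mn}$. This expresses $V(x)$ as (i) a weighted combination of sums of form \eqref{eq3.3} with shifted frequency $y=h+\gamma$ for $|h|\le H$ and weights $\ll 1/(1+|h|)$; (ii) pure one-variable sums $\sum_{X/8<k\le X}e(hk^c)$ with weights $1/h$ (for $1\le h\le H$) or $H/h^2$ (for $h>H$); and (iii) an error $\ll X\mc L/H$. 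The sums in (ii) are treated by Lemma~\ref{lem7} or Lemma~\ref{lem8} applied to the phase $hk^c$; since $c<2.1$, the weighted total over $h$ comes out $\ll X^{1-3\eta}$.

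For the bilinear piece in \eqref{eq3.3} with frequency $y$, apply Lemma~\ref{lem11} with $\alpha=\beta=c$ and $B=y$, so $F=|y|X^c\in(X^{8\eta},X^2]$. Writing $M=X^\mu$, $N=X^\nu$ (with $\mu+\nu=1$, $\nu\in[10\eta,1/2]$) and $F=X^\phi$, each of the nine terms in \eqref{eq2.2} becomes a monomial whose $X$-exponent is a linear function of $(\mu,\nu,\phi)$. A term-by-term check shows each is $\le X^{1-4\eta}$ in the admissible range; the tightest constraints come from the terms $F^{3/28}N^{23/28}M^{41/56}$ and $F^{2/21}N^{31/42}M^{17/21}$, which are precisely what force $N\gg X^{10\eta}$ and $c<2.1$. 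When $F\ll X$, Lemma~\ref{lem11}'s hypothesis $F\gg X$ is borderline; here I would fall back on the underlying $B$-process reasoning (or an $A$-process reduction via Lemma~\ref{lem3}) applied directly to the bilinear sum, which remains valid in this regime and delivers the same conclusion since the bound $V(x)\ll X/F$ obtained from Kusmin--Landau on the inner sum already beats $X^{1-3\eta}$.

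The final assertion concerns a Type I-like sum in which one coefficient sequence is trivial and the associated variable lies in the ``long'' range $\gg X^{1-10\eta}$, complementary to the main range $N\in[X^{10\eta},X^{1/2}]$. After reversing the order of summation if necessary, the task reduces to bounding a single-variable exponential sum $\sum e(xm^cn^c)$ with one variable fixed as a parameter. Lemma~\ref{lem10} (or Lemma~\ref{lem8}) applied to this one-variable sum in the long range---where the sum length is close to $X$---delivers a bound $\ll X^{1-3\eta}/(\text{short length})$ uniformly, and summing trivially over the remaining short variable concludes.

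The main obstacle is the exponent bookkeeping in the bilinear estimate: the nine Sargos--Wu terms attain their maxima at different boundary points of the three-parameter region, and the hypotheses $c<2.1$ and $X^{10\eta}\ll N\ll X^{1/2}$ are calibrated precisely so that each term simultaneously delivers an $X^{3\eta}$ saving. The secondary hurdle is handling the low-$F$ regime where the formal hypothesis of Lemma~\ref{lem11} is not met; fortunately in that regime the problem becomes easier, since the inner sum is amenable to Kusmin--Landau after Abel summation.
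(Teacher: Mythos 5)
The central claim of your bilinear estimate does not hold. You propose to bound the sums \eqref{eq3.3} via Lemma~\ref{lem11} (Sargos--Wu) with $\alpha=\beta=c$, $F=|y|X^c$, and assert that all nine terms of \eqref{eq2.2} are $\le X^{1-4\eta}$ for $N\in[X^{10\eta},X^{1/2}]$, $F\le X^2$. This fails for $F$ near the top of its range. Take $F=X^2$ and $M=N=X^{1/2}$ (so $MN=X$). The ninth term of \eqref{eq2.2} gives
\begin{equation*}
F^{1/8}(NM)^{3/4} = X^{1/4}\cdot X^{3/4} = X,
\end{equation*}
which is only the trivial bound and certainly not $\ll X^{1-3\eta}$. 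The seventh term $F^{1/5}N^{7/10}M^{3/5}$ is even worse: $X^{2/5+7/20+3/10}=X^{21/20}$, exceeding the trivial bound (so that term is simply inactive, but it certainly furnishes no saving). The point is that $x$ ranges up to $X^{2-c}$ in this lemma, so $F=xX^c$ can be as large as $X^2$; Lemma~\ref{lem11} is used profitably elsewhere in the paper (Section~\ref{sec:minorarcthm3}) precisely because there $F\ll X^{6/5}$. Your diagnosis that ``the low-$F$ regime is the secondary hurdle'' is inverted: the low-$F$ regime is trivial by Kusmin--Landau, but the high-$F$ regime is where your Sargos--Wu route breaks down.

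The paper takes a genuinely different route that does handle large $F$. After the Lemma~\ref{lem1} reduction (with $H=X^{1/100}$, not $X^{10\eta}$), it applies Weyl differencing (the $A$-process, Lemma~\ref{lem3}, as in \eqref{eq3.9}) in the $n$-variable with $Q=\eta N$, and then estimates the resulting inner sum over $m$ by a van der Corput exponent pair coming from Lemma~\ref{lem5} with $\ell=2$, i.e.\ the bound $F_1^{1/14}M^{10/14}+F_1^{-1}M$. The key quantitative fact is that after differencing, the relevant frequency parameter is $F_1\asymp(h+\gamma)X^c qN^{-1}$, which is much smaller than $F=(h+\gamma)X^c$; this is exactly the extra $N^{-1}$ saving that the unprocessed Sargos--Wu bound cannot capture. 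The resulting inequality $|S(\gamma)|^2\ll X^2/N+XN(HX^c)^{1/14}+X^{2-7\eta}$ gives $X^{1-7\eta/2}$ uniformly for $N\in[X^{10\eta},X^{1/2}]$ and $c<2.1$. Without some form of differencing or a smoothing step to reduce the effective $F$, a direct appeal to Lemma~\ref{lem11} cannot give \eqref{eq3.5} across the full range of $x$.

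Two smaller points: your proposed use of Lemma~\ref{lem7} on $\sum_{X/8<n\le X}e(hn^c)$ does not uniformly apply, since the hypothesis $T^{25/42}\ge N$, i.e.\ $hX^c\ge X^{42/25}$, fails for small $h$ when $c$ is well below $42/25$; the paper's use of Lemma~\ref{lem5} (the line in the proof that says ``Lemma 4 with $\ell=2$'' is evidently Lemma~\ref{lem5}(i)) avoids this. And in the final assertion, the paper does not reverse the order of summation or invoke Lemma~\ref{lem10}: it simply uses the same exponent-pair bound on the inner sum over $n\in I(N)$ with $N\gg X^{1-10\eta}$, as in \eqref{eq3.10}.
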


 \begin{proof}
We prove this for \eqref{eq3.4}; the details for \eqref{eq3.3} are similar but simpler. We apply Lemma \ref{lem1} with
 \[a_n = \frac 1{X^{\eta/2}} \ \sum_{\ell s = n} \ \
 b_\ell c_s.\]
We take $H = X^{\frac 1{100}}$. Now it suffices to obtain
 \begin{equation}\label{eq3.6}
S(\g) := \underset{\frac X8 < mn\le X}{\sum_{m \sim M} \ \sum_{n\sim N}}\ b_m c_n e((h + \g)(mn)^c) \ll X^{1-7\eta/2}
 \end{equation}
with $\g =\{x\}$ or $-\{x\}$ and $|h| < H$, together with
 \begin{align}
\sum_{1 \le h \le H} \ \frac 1h\ \Bigg|\sum_{\frac X8 < n\le X} e(hn^c)\Bigg| &\ll X^{1-3\eta}\label{eq3.7}\\[2mm]
\intertext{and}
\sum_{h > H} \ \frac H{h^2}\ \Bigg|\sum_{\frac X8 < n\le X} e(hn^c)\Bigg| &\ll X^{1-3\eta}.\label{eq3.8} 
 \end{align}
In each case we use Lemma \ref{lem4} with $\ell=2$. Treating the simpler bounds \eqref{eq3.7}, \eqref{eq3.8} first, we bound the sum over $n$ in \eqref{eq3.7}, \eqref{eq3.8} by
 \begin{align*}
&\ll F^{1/14} X^{10/14} + F^{-1}X\\[2mm]
&\ll h^{1/14} X^{13/14},  
 \end{align*}
which yields \eqref{eq3.7}, \eqref{eq3.8}.

For \eqref{eq3.6}, take $Q = \eta N$. Arguing as in \cite[proof of Theorem 5]{rcb1}, we have
 \begin{equation}\label{eq3.9}
|S(\g)|^2 \ll \frac{X^2}Q + \frac XQ\ \sum_{q \le Q} \ \sum_{n \ll N} \Bigg|\sum_{m\in I(M)} e((h+\g)m^c((n+q)^c-n^c)))\Bigg|.
 \end{equation}

For the inner sum on the right-hand side we use Lemma \ref{lem4} with $\ell =2$, obtaining the bound
 \[\ll (qN^{-1} HX^c)^{\frac 1{14}} M^{\frac{10}{14}}
 + \frac M{|\g|q N^{-1}X^c}\,.\]

Thus
 \[|S(\g)|^2 \ll \frac{X^2}N + XN(HX^c)^{\frac 1{14}}
 + X^{2-7\eta}\]
since $|\g|X^c > X^{8\eta}$. This gives the desired bound \eqref{eq3.5}.

For the case $b_n=1$ identically, it suffices to add the bound
 \begin{equation}\label{eq3.10}
\sum_{n\in I(N)} e((h+\g)m^cn^c) \ll NX^{-4\eta} 
 \end{equation}
whenever $N \ge X^{1-10\eta}$. We bound the left-hand side by
 \[\ll (HX^c)^{\frac 1{14}}N^{\frac{10}{14}}
 + (xX^c)^{-1}N\]
and obtain \eqref{eq3.10} at once.

We now deduce \eqref{eq3.1} for $2 \le s\le 4$. We find that Lemma \ref{lem17} yields
 \begin{equation}\label{eq3.11}
U(x) \ll X^{1-2\eta} \quad (\tau < x < X^{1-c}) 
 \end{equation}
(here we require the reader to look ahead to the form of $S^+(x)$ and $T^+(x)$ in later sections, or else use Heath-Brown's identity if appropriate). Now (recalling Lemma \ref{lem12}(i)) a simple splitting-up argument yields (for some $B < X^\eta$)

 \begin{align*}
\int_\tau^{X^{1-c}} |U(x)|^s \Phi(x) dx &\ll \mc L X^{-c\eta} \sup_{y\in [\tau, K]} |U(y)|^{s-2} \int_\tau^K |U(x)|^2 dx\\[2mm]
&\ll \mc L X^{-c\eta} X^{s-2-3\eta} X^{2-c+\eta} \ll X^{s-c-3\eta}.
 \end{align*}
The argument for \eqref{eq3.2} is very similar using Lemma \ref{lem12} (ii).
 \end{proof}
 \bigskip

\section{Minor arc in Theorems \ref{thm1} and \ref{thm3}}\label{sec:minorarcthms12}

We shall show that
 \begin{equation}\label{eq4.1}
\int_{X^{1-c}}^K |S_1(x)|^4 dx \ll X^{4-c-3\eta}.
 \end{equation}
Since $\Phi(x) \ll X^{-c\eta}$, this reduces the integral in \eqref{eq1.4}, in effect, to the major arc in Theorem \ref{thm3}. For Theorem \ref{thm1}, let
 \[E_0(x) = \begin{cases}
 S^2(x) \Phi(x) & x \in [\tau, K]\\[1mm]
 0 & \text{otherwise.}
 \end{cases}\]
By Parseval's formula,
 \[\int_V^{2V} |\widehat E_0(R)|^2 dR < \int_\tau^K
 |E_0(x)|^2 dx \ll (V^{1/c})^{4-c-5c\eta}.\]
Hence
 \[\widehat E_0(R) = \int_{\mb R} S^2(x)
 \Phi(x) e(-Rx)dx\]
satisfies
 \[|\widehat E_0(R)| < V^{\frac 2c-1-2\eta}\]
except for a set of measure $O(V^{1-\eta})$ in $[V,2V]$. Again, this gives the desired `reduction to the major arc.'

To prove \eqref{eq4.1} we first apply Lemma \ref{lem13}(ii) with
 \[\mc S(x) = S_1(x), \mc J(x) = A(x), G(x)
 = \bar S_1(x) S_1(x)^2.\]
From Lemma \ref{lem12} and the Cauchy-Schwarz inequality,
 \begin{align*}
\int_\tau^K |G(y)|dy &\ll \mc L\left(\int_B^{2B} |S_1(x)|^2dx\right)^{1/2} \left(\int_B^{2B} |S_1(x)|^4 dx\right)^{1/2}\\
\intertext{(for some $B\in [\tau, K]$)}
 &\ll X^{\frac{5-c}2 + 2\eta}.
 \end{align*}
We shall show below that \eqref{eq2.4} holds with
 \begin{equation}\label{eq4.2}
U = X^{2-c-12\eta}
 \end{equation}
and that
 \begin{equation}\label{eq4.3}
S_1(x) \ll X^{(7-c)/6 - 5\eta}.
 \end{equation}
Hence

 \begin{align*}
\int_{X^{1-c}}^K |G(y)|^2dy &\ll X^{(7-c)/2-15\eta} \int_{X^{1-c}}^K |G(y)|dy\\[2mm]
&\ll X^{6-c-13\eta}.
 \end{align*}
Now Lemma \ref{lem13}(ii) yields
 \begin{align*}
\left(\int_\tau^K |S_1(x)|^4dx\right)^2 &\ll X^{2-c+6-c-10\eta}\\[2mm]
&\quad + X^{2-c-12\eta + 6-c + 2\eta} \ll X^{8-2c-10\eta} 
 \end{align*}
as required for \eqref{eq4.1}.

Turning to the proof of \eqref{eq4.2}, we apply the $B$ process first, followed by a partial summation and then Lemma \ref{lem8} with $k=5$. This is legitimate since the $B$ process produces a sum of the form
 \[\sum_{n\in I} e(yn^{c/(c-1)})\]
where
 \[yn^{c-1} \asymp F := xX^c;\]
and the five differentiations required in Lemma \ref{lem8} are permissible unless
 \begin{equation}\label{eq4.4}
\frac c{c-1} = m\in \mb N \ , \ m \le 4.
 \end{equation}
We have excluded $c = 4/3$, so \eqref{eq4.4} cannot hold. The error term in Lemma \ref{lem4} is
 \[\ll \mc L + F^{-1/2}X \ll X^{1/2}\]
(since $x \ge X^{1-c}$), which is acceptable. We may take
 \[U \ll XF^{-\frac 12} \ N_1^{1+\eta} \left\{
 (FN_1^{-5})^{\frac 1{20}} + N_1^{-\frac 1{20}} +
 (F_1N^{-5})^{-\frac 1{50}} N_1^{-\frac 1{10}}\right\}\]
where $N_1 = FX^{-1}$. Here
 \begin{align*}
XF^{-\frac 12} N_1^{1+\eta} (FN_1^{-5})^{\frac 1{20}} &\ll X^{\frac 14 + \eta} F^{3/10}\\[2mm]
&\ll X^{2-c-12\eta}
 \end{align*}
since $c < \frac{39}{29} < \frac{35}{26}$. Next,
 \[XF^{-1/2} N_1^{\frac{19}{20} + \eta} \ll
 X^{\frac{9c+1}{20} + 2\eta} \ll X^{2-c-12\eta}\]
since $c < \frac{39}{29}$. Finally
 \[XF^{-\frac 12}(FN_1^{-5})^{-\frac 1{50}} 
 N_1^{\frac 9{10}+\eta} \ll X^\eta F^{24/50}
 \ll X^{2-c-12\eta}\]
since $c < \frac{39}{29} < \frac{50}{37}$.
 \medskip

We now use Lemma \ref{lem14} to prove \eqref{eq4.3}. Here and below, we take $G(n) = e(xn^c)$ in Lemma \ref{lem14}. A \textit{Type I} sum will be of the form
 \[S_I(x) = \underset{\frac X8 < mn\le X}{\sum_{m\sim M} a_m 
 \sum_{n\sim N}} e((mn)^cx)\]
and a \textit{Type II} sum will be of the form
 \[S_{II}(x) = \underset{\frac X8 < mn\le X}{\sum_{m\sim M} a_m 
 \sum_{n\sim N}} b_n\ e((mn)^cx)\]
Here $|a_m| \le 1$, $|b_n| \le 1$. Taking
 \[v = X^{0.36} \ , \ u = X^{0.12} \ , \
 z = X^{0.35}\]
in Lemma \ref{lem14}, it suffices to show that
 \begin{align}
S_I(x) &\ll X^{(7-c)/6-6\eta} \quad \text{for } N \ge z\label{eq4.5}\\
\intertext{and}
S_{II}(x) &\ll X^{(7-c)/6-6\eta} \quad \text{for } u \le N \le v.\label{eq4.6}
 \end{align}
 \newpage

For \eqref{eq4.5}, we appeal to Lemma \ref{lem10}. We have $(7-c)/6 > 0.942$. The first two terms in the bound for $S_I(x)$ are acceptable, while
 \[XF^{-1} \ll 1.\]
Next, for $N \ge z$,
 \[F^{1/8} X^{13/16} N^{-1/8} \ll X^{1.345/8+13/16-0.35/8}
 \ll X^{0.94}.\]
Finally, we have a term that is
 \begin{align*}
&\ll (FX^4)^{1/6} + (FX^5N^{-2})^{1/6}\\[2mm]
&\ll X^{(1.345+4)/6} + X^{(1.345+5-0.7)/6}\ll X^{0.941}.
 \end{align*}

For \eqref{eq4.6}, we apply the obvious variant of \eqref{eq3.9}, taking $Q = X^{0.116}$ to give an acceptable term $X^2/Q$. It remains to show that for $q \le Q$, $n \le N$,
 \begin{equation}\label{eq4.7}
S_{n,q} : = \sum_{m\sim M} e(xm^c((n+q)^c-n^c)) \ll MQ^{-1}. 
 \end{equation}
Here $F$ is replaced by $F_1 := xqX^cN^{-1}$. We apply Lemma \ref{lem5} (ii) with $\ell=4$ to obtain
 \[S_{n,q} \ll F_1^{13/31}M^{3/31}\ll MX^{-0.116}\]
since $X^{28/31} N^{-15/31} > X^{0.729} > X^{(39/29 + .116)13/31 + .116}$. However, the six differentiations are only permissible when
 \[c\ne 1 + \frac 1m\]
where $m \le 4$. We excluded $m=3$, so we now need to treat $c = \frac 54$ separately. Here we use Lemma \ref{lem5} (ii) with $\ell=3$; the five differentiations are permissible and
 \[S_{n,q} \ll F_1^{11/30}M^{1/6} \ll MX^{0.116}\]
by a similar calculation. This completes the discussion of the minor arc.
 \bigskip
 
 \section{Minor arc in Theorem \ref{thm2}.}\label{sec:minorarcthm3}
 
 We shall set up a suitable function $\rho^+$ based on Type I and Type II information. To obtain a negligible contribution of the minor arc we require
 \begin{equation}\label{eq5.1}
\int_{X^{1-c}}^K S(x) G(x) dx \ll X^{3-c-3\eta} 
 \end{equation}
for the two functions
 \begin{equation}\label{eq5.2}
G(x) = S^+(x) S(x) \Phi(x) e(-Rx), \ S^+(x)^2 \Phi(x) e(-Rx). 
 \end{equation}
It will suffice to show that
 \begin{align}
A(x) &\ll X^{\frac c2 + \eta} + X^{1-c} |x|^{-1} \quad (|x| < 2X^{2\eta})\label{eq5.3}\\
\intertext{and}
S^+(x) &\ll X^{\frac{3 - c}2 - 4\eta} \quad (X^{1-c} \le x \le K).\label{eq5.4}
 \end{align}
We then apply Lemma \ref{lem13} (ii) with $\mc S(x) = S(x)$, $\mc T(x) = A(x)$, $G(x)$ as in \eqref{eq5.2} so that
 \begin{align*}
\int_{X^{1-c}}^K |G(x)|dx &\ll X^{1+\eta}\\[2mm]
\intertext{and, using \eqref{eq5.4},}
\int_{X^{1-c}}^K |G(x)|^2dx &\ll X^{3 - c - 8\eta}\int_{X^{1-c}}^K |G(x)|dx\\[2mm]
&\ll X^{4-c-7\eta}.
 \end{align*}
Thus
 \begin{align*}
\left(\int_{X^{1-c}}^K S(x) G(x) dx\right)^2 &\ll \mc L^4X^{2-c} X^{4-c-7\eta} + \mc L^2 X^{\frac c2 + 1 + \eta + 2(1+2\eta)}\\[2mm]
&\ll X^{6 - 2c - 6\eta} 
 \end{align*}
using $c < 6/5$, which proves \eqref{eq5.1}.

To obtain \eqref{eq5.3} we use the Kusmin-Landau theorem if $X^{c-1}|x| < \eta$. Otherwise, we use the $B$ process, giving a main term
 \[\ll F^{\frac 12} \ll X^{\frac c2 + \eta}\]
where $F = xX^c$, and error terms
 \[\ll F^{-1/2}X + \mc L \ll X^{1/2}.\]

Aiming towards the definition of $S^+(x)$, we claim that Type II sums are $\ll X^{0.9}$ for either of the alternatives
 \begin{align}
X^{1/5} &\ll N \ll X^{29/105}\label{eq5.5}\\
\intertext{and}
X^{1/3} &\ll N \ll X^{11/25}.\label{eq5.6} 
 \end{align}
For \eqref{eq5.5} we begin with \eqref{eq3.9}, replacing $h + \b$ by $x$, and taking $Q = X^{0.2} \ll N$. It remains to show that for given $Q_1\in \left[\frac 12, Q\right]$ and $n\sim N$, we have
 \[S^* := \sum_{q\sim Q_1} \ \sum_{n\in I(M)}
 e(x((n+q)^c-n^c)m^c) \ll Q_1 MX^{-1/5}.\]
Following the analysis on pp. 171--172 of \cite{bakwein2}, we find that for some $q \sim Q_1$, and $R$ at our disposal with $R \ll N_1$, and some $r\sim R$, we have
 \begin{equation}\label{eq5.7}
\frac{S^{*4}}{\mc L^4} \ll \frac{N^5M^4}{FQ} + N^4M^2 + \frac{X^4NQ_1}{FQ^2} \left(\frac{N_1^2}R + N_1|S(n,q,r)|
\right). 
 \end{equation}
Here $N_1 \asymp FQ_1/X \ll X^{0.4}$,
 \begin{align*}
t(n,q) &= (n+q)^c - (n-q)^c,\\[2mm]
t_1(n_1,r) &= (n_1+r)^{c/(c-1)} - (n_1-r)^{c/(c-1)}, 
 \end{align*}
and we define
 \[S(n,q,r) = \sum_{n_1\in I(r)} e\left(
 C(xX^ct(n,q))^{\frac 1{1-c}}\ t_1(n_1,r)\right)\]
with $I(r)$ a subinterval of $[N_1,2N_1]$. We choose $R$ so that
 \[\frac{M^4 N^5 Q_1 N_1^2}{FQ^2R} = \frac{X^4}{Q^2},\]
that is,
 \[R = \frac{NQ_1N_1^2}F \asymp \frac{FNQ_1^3}{X^2}.\]
We have $R \ll N_1$ since $NQ_1^2 \ll NQ^2 \ll X$.

The terms $N^5M^4/FQ$ and $N^4M^2$ in \eqref{eq5.8} are $\ll X^4/Q^2$ since
 \begin{align*}
\frac{N^5M^4}{FQ} \ \frac{Q^2}{X^4} &= \frac{NQ}F \ll \frac{NQ}X \ll 1,\\[2mm]
\frac{N^4M^2Q^2}{X^4} &\ll \frac{Q^2}{M^2} \ll 1. 
 \end{align*}
For $S(n,q,r)$, it suffices to show that
 \[S(n,q,r) \ll X^{0.6}/N.\]
For then
 \begin{align*}
\frac{X^4}{FQ^2}\, NQ_1N_1 \, S(n,q,r) &\ll \frac{X^4}{FQ^2}\, X^{0.6}Q_1\, \frac{FQ_1}X\\[2mm]
 &\ll \frac{X^4}{Q^2}\, .
 \end{align*}

We apply Lemma \ref{lem7} to $S(n,q,r)$ with (taking $2\eta < 1.2-c$). We have
 \[T \asymp \frac{FQ_1}N \ \frac r{N_1} \asymp
 \frac{Xr}N \ ; \ T \ll \frac{XR}N \asymp \frac{FQ_1^3}X
 \ll X^{0.8-2\eta}.\]
Provided that
 \[T^{17/42} \le N_1 \le T^{25/42},\]
we obtain
 \begin{align*}
S(n,q,r) &\ll X^\eta T^{13/84} N_1^{1/2}\\[2mm]
&\ll X^{\eta + \frac{13}{84}\, \frac 45 + \frac 15 - \eta} \ll \frac{X^{0.6}}N \ \ (N \ll X^{29/105}). 
 \end{align*}

We certainly have
 \[N_1 \le T^{25/42}\]
since
 \[X^{0.4} < \left(\frac XN\right)^{25/42} \kern -10pt X^{-\eta}
 \ \text{ as }\ N< X^{0.3} < X^{1-0.4 \times \frac{42}{25}
 - 2\eta}.\]
We may have $N_1 < T^{17/42}$. In this case we apply Lemma \ref{lem4} with $\ell =2$ to $S(n,q,r)$. The term $T^{-1}N_1$ is $\ll 1$, so that
 \[S(n,q,r) \ll T^{1/14} N_1^{10/14} \ll
 T^{\frac 1{14} + \frac{170}{588}} < \frac{X^{0.6}}N\]
since $T < X^{4/5}$, $N < X^{3/10}$. This completes the proof that $S_{II} \ll X^{0.9}$ when \eqref{eq5.5} holds.

Now suppose that \eqref{eq5.6} holds. We apply Lemma \ref{lem11}. Five of the terms $U_1, U_2, \ldots, U_9$ on the right-hand side of \eqref{eq2.2} are acceptable for $N \ll X^{0.5}$:
 \begin{align*}
U_1 &\ll F^{1/20} N^{9/40} X^{29/40} \ , \ \frac{1.2}{20} + \frac{9/2}{40} + \frac{29}{40} = 0.8\ldots,\\[2mm]
U_2 &\ll F^{3/46} N^{11/46} X^{32/46} \ ,\ \frac{3.6}{46} + \frac{11/2}{46} + \frac{32}{46} = 0.8\ldots,\\[2mm]
U_3 &\ll F^{1/10} N^{3/10} X^{3/5} \ , \frac{1.2}{10} + \frac{3/2}{10} + \frac 35 = 0.87,\\[2mm]
U_5 &\ll F^{1/11} N^{1/33} X^{17/22} \ , \ \frac{1.2}{11} + \frac{1/2}{33} + \frac{17}{22} = 0.8\ldots,\\[2mm]
U_7 &\ll F^{1/5} N^{1/10} X^{3/5} \ , \ \frac{1.2}5 + \frac{1/2}{10} + \frac 35 = 0.89.
 \end{align*}
Also $U_8 \ll XN^{-1/2} \ll X^{0.9}$ for $N > X^{0.2}$. For the remaining terms,
 \begin{align*}
U_4 &\ll F^{3/28} N^{5/56} X^{41/56} \ll X^{0.9} \ \text{ for } \ N \ll X^{0.44},\\[2mm]
U_6 &=  F^{2/21} X^{17/21} N^{-1/14} \ll X^{0.9} \ \text{ for } \ N \gg X^{1/3},\\
\intertext{while the bound}
U_9 &= F^{1/8} X^{3/4} \ll X^{0.9} \quad (F \ll X^{6/5})
 \end{align*}
actually determines our range of $c$.

Finally we consider Type I sums, using
 \[S_I \ll M\, F^{1/14} N^{10/14},\]
which follows from Lemma \ref{lem4} with $\ell = 2$. Here
 \[MF^{1/14} N^{10/14} \ll X^{\frac{10}{14} + 
 \frac{1.2}{14}} M^{\frac 4{14}} \ll X^{0.9} \ 
 \text{ for } \ M \ll X^{0.35}.\]
If $X^{0.35} \ll M \ll X^{0.44}$ we treat $S_I$ as a Type II sum. Hence
 \[S_I \ll X^{0.9} \ \text{ for } \ M \ll X^{0.44}.\]

We now apply Lemma \ref{lem15}, taking $w(n) = e(xn^c)$, $\a = \frac 13$ and $\b = \frac{11}{25} - \frac 13 = \frac 8{75}$, $M = X^{0.44}$, $S=1$. Thus
 \[\sum_{\ell \le X^{11/25}} u_\ell S(\ell, X^\b)
 \ll X^{0.9}\mc L^3\]
for any coefficients $u_\ell$ with $|u_\ell| \le 1$, $u_\ell = 0$ for $(\ell, P(X^\eta)) > 1$. (For $X^{1/3} < \ell \le X$ this uses Lemma \ref{lem16}.)  We use Buchstab's identity
 \[\rho(u,z) = \rho(u,w) - \sum_{w\le p < z} \
 \rho\left(\frac up, p\right) \quad (2 \le w < z).\]
Multiplying by $e(xn^c)$ and summing over $n$, we obtain
 \begin{align}
S(x) &= \sum_{\frac X8 < n\le X} \rho(n, (3X)^{1/2})\, e(xn^c)\label{eq5.8}\\[2mm]
&= \sum_{\frac X8 < n\le X} \rho(n,X^\b)\, e(xn^c) - \sum_{X^\b \le p_1 < (3X)^{1/2}} \ \sum_{\substack{\frac X8 < p_1n \le X\\
(n,P(p_1))=1}} e(x(p_1n)^c).\notag
 \end{align}
In writing sums over primes, we define $\a_j$ by $p_j = X^{\a_j}$. We introduce intervals
 \begin{align*}
I_1 &= \left[\b, \frac 15\right) \ , \ I_2 = \left[\frac 15, \frac{29}{105}\right) \ , \ I_3 = \left[\frac{29}{105}\, , \, \frac 13\right) \ , \ I_4 = \left[\frac 13, \, \frac{11}{25}\right),\\[2mm]
I_5 &= \left[\frac{11}{25}, \, \frac 12 + \frac{\log 3}{2\log X}\right).
 \end{align*}
Let
 \[S_j(x) = \sum_{\a_1\in I_j} \ \sum_{\substack{
 \frac X8 < p_1n \le X\\
 (n,\, P(p_1))=1}} e(x(p_1n)^c) \quad (1 \le j \le 4)\]
and $\ds S_0(x) = \sum_{\substack{\frac X8 < n\le X\\
(n, \, P(X^\b))=1}} e(xn^c)$,
 \[D_1(x) = \sum_{\a_1\in I_5} \ \sum_{\substack{
 \frac X8 < p_1n \le X\\
 (n,\, P(p_1))=1}} e(x(p_1n)^c).\]
From \eqref{eq5.8}, we have

 \begin{equation}\label{eq5.9}
S(x) = S_0(x) - \sum_{j=1}^4 S_j(x) - D_1(x).
 \end{equation}

We use Buchstab's identity again for $S_1(x)$:
 \begin{align}
S_1(x) &= \sum_{\a_1\in I_1} \ \sum_{\substack{
\frac X8 < p_1n \le X\\
(n, \, P(X^\b))=1}} e(x(p_1n)^c)\label{eq5.10}\\[2mm]
&\qquad - \sum_{\a_1\in I_1} \ \sum_{\b \le \a_2 < \a_1} \ \sum_{\substack{
\frac X8 < p_1p_2n\le X\\
(n,\, P(p_2))=1}} e(x(p_1p_2n)^c)\notag\\[2mm]
&= S_5(x) - S_6(x), \text{ say}.\notag 
 \end{align}
Next,
 \[S_6(x) = S_7(x) - S_8(x)\]
where
 \begin{align}
S_7(x) &= \sum_{\a_1\in I_1} \ \sum_{\b \le \a_2 < \a_1} \
 \sum_{\substack{
 \frac X8 < p_1p_2 n\le X\\
 (n,\, P(X^\b))=1}} e(x(p_1p_2n)^c),\notag\\[2mm]
S_8(x) &= \sum_{\a_1\in I_1} \ \sum_{\b \le \a_3 < \a_2 < \a_1} \
 \sum_{\substack{
 \frac X8 < p_1p_2p_3 n\le X\\
 (n,\, P(p_3))=1}} e(x(p_1p_2p_3n)^c).\label{eq5.11}
 \end{align}
We write $D_2(x)$ for the part of the right-hand side of \eqref{eq5.11} with $\a_1 + \a_3 \in I_3$, $\a_2 + \a_3 \in I_3$, $\a_1 + \a_2 + \a_3 \in I_5$, and define $K_2(x)$ by
 \begin{equation}\label{eq5.12}
S_8(x) = D_2(x) + K_2(x).
 \end{equation}

Next, considering the possible decompositions $n=p_2$ and $n=p_2p_3$ in the definition of $S_3(x)$, we have
 \begin{equation}\label{eq5.13}
S_3(x) = D_3(x) + S_9(x)
 \end{equation}
where
 \[D_3(x) = \underset{1 \le \a_1 + \a_2 < 1 + 
 \frac{\log 2X}{\log X}}{\sum_{\a_1 \in I_3} \ 
 \sum_{\a_2 > \a_1}} e(x(p_1p_2)^c),\]
 
 \[S_9(x) = \underset{1 \le \a_1 + \a_2 + \a_3 < 1 
 + \frac{\log 2X}{\log X}}{\sum_{\a_1 \in I_3} \ 
 \sum_{\a_3 \ge \a_2 > \a_1}} e(x(p_1p_2p_3)^c).\]

Finally,
 \begin{equation}\label{eq5.14}
S_9(x) = D_4(x) + K_4(x),
 \end{equation}
where $D_4(x)$ is the part of the sum defining $S_9(x)$ for which $\a_1 + \a_2 \le \frac{14}{25} + \frac{\log 2}{\log X}$, and $K_4 : = S_9 - D_4$. Combining \eqref{eq5.9}--\eqref{eq5.14}, our decomposition of $S(x)$ is
 \[S = S_0 - S_5 + S_7 - D_2 - K_2 - S_2 - D_3 - D_4 -
 K_4 - S_4 - D_1.\]
We define
 \begin{align}
S^+ &= S_0 - S_5 + S_7 - K_2 - S_2 - K_4 - S_4\label{eq5.15}\\[2mm]
&= S + \sum_{j=1}^4 \, D_j.\notag 
 \end{align}
We observe firstly that
 \[S^+(x) = \sum_{\frac X8 < n\le X} \rho^+(n)\, e(xn^c)\]
with $\rho^+ \ge \rho$, since the $D_j$ have non-negative coefficients. Secondly, all of $S_0$, $S_5$, $S_7$, $K_2$, $S_2$, $K_4$, $S_4$ have values $\ll X^{3-c-6\eta}$, hence so does $S^+$. For $S_0$, $S_5$, $S_7$ this follows from Lemma \ref{lem15}. For $S_2$, $S_4$, $K_2$ and $K_4$ we appeal to Lemma \ref{lem16} and the following observations.
 \medskip
 
 \begin{enumerate}
\item[(i)] If $\b \le \a_3 < \a_2 < \a_1 < 1/5$ and $\a_2 + \a_3\not\in I_3$, then $\a_2 + \a_3 > \frac{16}{75} > \frac 15$, $\a_2 + \a_3 < 2/5$. Hence $\a_2 + \a_3 \in I_2 \cup I_4$. Similarly for $\a_1 + \a_2$.
 \bigskip

\item[(ii)] If $\b \le \a_3 < \a_2 < \a_1 < 1/5$ and $\a_2 + \a_3 \in I_3$, $\a_1 + \a_3 \in I_3$, $\a_1 + \a_2 + \a_3 \not\in I_5$, then $\a_1 + \a_2 + \a_3 \le \frac 32 \cdot \frac 13 = \frac 12$, hence $\a_1 + \a_2 + \a_3 < \frac{11}{25}$, while $\a_1 + \a_2 + \a_3 \ge \frac 32 \cdot \frac{29}{105} > \frac 13$; $\a_1 + \a_2 + \a_3 \in I_4$.
 \bigskip

\item[(iii)] If $\a_1 < \a_2 \le \a_3$, $\frac{29}{105} \le \a_1 < \frac 13$, $1 \le \a_1 + \a_2 + \a_3 < \frac{\log 2X}{\log X}$, and $\a_1 + \a_2 \ge \frac{14}{25} + \frac{\log 2}{\log X}$, then $\a_3 \le \frac{11}{25}$. Moreover $
\a_3 \ge \frac 13\, (\a_1 + \a_2 + \a_3) \ge \frac 13$, hence $\a_3 \in I_4$.
 \end{enumerate}

In Section \ref{sec:minorarcthms1_6} we shall quantify the contribution of the functions $D_j(x)$ to the integral in \eqref{eq1.6}; similarly for Theorem \ref{thm5,thm6}.
 \bigskip
 
 \section{Minor arc in Theorem \ref{thm5}.}\label{sec:minorarcthm5}

In the present section we show that for $c < \frac{3581}{3106}$, we have
 \begin{equation}\label{eq6.1}
\int_{X^{1-c}}^{\frac 12} T(x) G(x)dx \ll X^{3-c-3\eta},
 \end{equation}
where
 \[G(x) = T^2(x) \Phi(x) e(-Rx).\]
We apply Lemma \ref{lem13} (ii) with $\mc S(x) = T(x)$, $\mc J(x) = B(x)$. To prove \eqref{eq6.1} it suffices to show that
 \begin{align}
B(x) &\ll X^{3-2c-15\eta} + \mc L X^{1-c} |x|^{-1} \quad (0 < |x| < 2X^{2\eta})\label{eq6.2}\\
\intertext{and}
T(x) &\ll X^{\frac{3-c}2 - 5\eta} \quad \left(X^{1-c} \le x < \frac 12\right).\label{eq6.3}
 \end{align}
For then Lemma \ref{lem13} (ii) (with $G(x) = 0$ for $x > \frac 12$) yields
 \begin{align*}
\left|\int_{X^{1-c}}^{\frac 12} T(x) G(x)\, dx\right|^2 &\ll X^{2-c} \mc L^4 \max_{x\in \left[X^{1-c},\frac 12\right]} |T(x)|^2 \int_{X^{1-c}}^{\frac 12} |T(x)|^2 dx\\[2mm]
&\qquad + X^{4-2c-15\eta} \mc L^2\left(\int_{X^{1-c}}^{\frac 12} |T(x)|^2dx\right)^2. 
 \end{align*}
The first summand on the right-hand side is
 \[\ll X^{2-c+3-c-10\eta + 1 + 3\eta} \ll
 X^{6-2c-7\eta}\]
by \eqref{eq6.3} and Lemma \ref{lem12} (i). The second summand is
 \[\ll X^{4-2c-14\eta} X^{2(1+2\eta)}
 \ll X^{6-2c-6\eta}\]
by \eqref{eq6.2} and Lemma \ref{lem12} (i).

For \eqref{eq6.2}, we use Lemma \ref{lem1} with $a_n = 1$. We take
 \[H = X^{2c-2+16\eta}.\]
Since $\{x\} = x$ in the sum, our objective is to show that we have

 \begin{align}
\sum_{0 \le h \le H} \min \left(1, \frac 1h\right) \Bigg|\sum_{\frac X8 < n\le X} e((h\pm x)n^c)\Bigg| &\ll X^{3-2c-16\eta} + X^{1-c} |x|^{-1},\label{eq6.4}\\[2mm]
\sum_{1\le h \le H} \frac 1h \Bigg|\sum_{\frac X8 < n\le X} e(hn^c)\Bigg| &\ll X^{3-2c-16\eta}\label{eq6.5}\\
\intertext{and}
\sum_{h > H} \ \frac H{h^2} \Bigg|\sum_{\frac X8 < n\le X} e(hn^c)\Bigg| &\ll X^{3-2c-16\eta}\label{eq6.6}.
 \end{align}

We begin with the contribution from $h=0$ in \eqref{eq6.4}.
If $X^{c-1}|x| < \eta$, we obtain the desired bound from the Kusmin-Landau theorem. Otherwise Lemma \ref{lem5} (ii) with $\ell = 2$ yields the bound
 \[\ll (x\, X^c)^{2/7} X^{2/7},\]
which is acceptable since $16c < 19$.

For the terms in \eqref{eq6.6} with $h \ge X^{3/2-c}$ we use Lemma \ref{lem5} (ii) with $\ell = 2$. It is clear that these sums produce a contribution
 \[\ll X^a \ll X^{3-2c}\]
where
 \begin{equation}\label{eq6.7}
a = 2c - 2 + \eta - \frac 57\, \left(\frac 32 - c\right) + \frac{2c+2}7,
 \end{equation}
and $a < 3 -2c$ follows from $c < \frac{81}{70}$.

We can treat together the terms in \eqref{eq6.4} with $1 \le h \le H$, the sum \eqref{eq6.5}, and the remaining part of \eqref{eq6.6} by estimating (for $\frac 12 \le H_1 \ll H^{\frac 32 - c}$, $H_1 = 2^j$)
 \[S(H_1) : = H_1^{-1} \sum_{h\sim H_1} \Bigg|
 \sum_{\frac X8 < n\le X} e((h+\g)n^c)\Bigg|\]
where $\g \in \{x, -x, 0\}$. Let $F = H_1X^c$. We apply the $B$ process, followed by the $A$ process, to the sum over $n$, choosing
 \[Q = H_1X^{5c-6+34\eta}\]
so that
 \[XF^{-1/2} \left(\frac{FX^{-1}}{Q^{1/2}}\right)
 \ll X^{3-2c-17\eta}.\]
The errors from the $B$ process contribute (for some $H_1$)
 \[\ll \mc L H_1^{-1} \sum_{h\sim H_1} \left(
 X F^{-1/2} + 1\right) \ll X^{1-c/2} 
 H_1^{-\frac 12} + \mc L,\]
which is acceptable.

After the $A$ process we arrive at sums
 \[S^*(h) = \sum_{n\in I} e\left((h+\g)^{\frac 1{1-c}}
 \left((n+q)^{\frac c{c-1}} - n^{\frac c{c-1}}\right)
 \right),\]
where the interval $I$ has endpoints $\asymp FX^{-1} \asymp H_1X^{c-1} = N_1$, say. It suffices to show that, for fixed $q\in [1,Q]$,
 \[\sum_{h\sim H_1} |S^*(h)| \ll H_1 N_1 Q^{-1}.\]
We apply Lemma \ref{lem6}, with $y_h = \left(\frac{h+\g}{H_1 + \g}\right)^{1-c}$. This gives, with $F_1 = H_1 X^c qN_1^{-1} \asymp qX$, and assuming initially that $N_1 \in [F_1^{1/3}, F_1^{1/2}]$,
 \begin{equation}\label{eq6.8}
\sum_{h\sim H_1} |S^*(h)| \ll H_1^{\frac{319}{345}} N_1^{\frac{449}{690}} F_1^{\frac{63}{690}+\eta} + H_1 N_1^{\frac 12} F_1^{\frac{141}{950}+\eta}.
 \end{equation}
It suffices to show that
 \begin{align}
&N_1^{\frac{449}{690}} F_1^{\frac{63}{690}} \ll H_1^{\frac{26}{345}} X^{-2\eta} N_1 Q^{-1}\label{eq6.9}\\
\intertext{and}
&N_1^{\frac 12} F_1^{\frac{141}{950}} N_1^{-1} Q \ll X^{-2\eta}\, .\label{eq6.10}
 \end{align}
The worst case in each of \eqref{eq6.8}, \eqref{eq6.9} is $q = Q$, $H_1=H$. (The factor lost for $H_1 > H$ is outweighed by the factor $HH_1^{-1}$ arising from $Hh^{-2}$). For \eqref{eq6.8} we require
 \begin{equation}\label{eq6.11}
(HX^{c-1})^{\frac{449}{690}} (HX^{5c-5})^{\frac{63}{690}} H^{-\frac{26}{345}} (HX^{c-1})^{-1} HX^{5c-6} \ll X^{-2\eta}. 
 \end{equation}
It may be verified that \eqref{eq6.10} holds with something to spare for $c < \frac{3581}{3106}$.

For \eqref{eq6.9} we must show
 \[(HX^{c-1})^{-\frac 12} (QX)^{\frac{141}{950}}
 HX^{5c-6} \ll X^{-2\eta}.\]
This follows after a short computation from $c < \frac{3581}{3106}$, determining our upper bound for $c$.

We certainly have $N_1 \le F_1^{1/2}$ (using $c < 7/6$). If $N_1 < F^{1/3}$, we apply Lemma \ref{lem5} (i) with $\ell=2$:
 \[S^*(h) \ll F_1^{\frac 1{14}} N_1^{10/14}
 \ll F_1^{13/42}.\]
It suffices to show that $S^*(h) \ll N_1 Q^{-1}$, or
 \begin{equation}\label{eq6.12}
F_1^{13/42} N_1^{-1}Q \ll X^{-2\eta}.
 \end{equation}
The worst case is $q = Q$, $H_1 = H$ and in this case the left-hand side of \eqref{eq6.11} is
 \[\ll (X^{7c-7})^{\frac{13}{42}} X^{4c-5+C\eta}
 \ll X^{-2\eta}.\]
This completes the discussion of \eqref{eq6.2}.

In view of Lemma \ref{lem15}, in order to prove \eqref{eq6.3} it suffices to show that
 \begin{align}
S_{II} &: = \underset{\frac X8 < m\ell \le X}{\sum_{m\sim M} \ \sum_{\ell \sim N}}\ b_m c_\ell e(x[(m\ell)^c]) \ll X^{0.92353}\label{eq6.13}\\
\intertext{for $X^{0.16} \ll N \ll X^{0.38}$, and that}
S_I &:= \underset{\frac X8 < m\ell \le X}{\sum_{m\sim M} \ \sum_{\ell \sim N}}\ b_m e(x[(m\ell)^c]) \ll X^{0.92353}\label{eq6.14}
 \end{align}
for $N \gg X^{0.38}$. In both cases we apply Lemma \ref{lem1} (adapted to allow $a_n \ll X^\eta$) with (e.g.)
 \[a_n = \sum_{m\ell = n} \ b_mc_\ell \ \
 \text{ in case \eqref{eq6.12}}.\]
We choose $H = X^{0.07648}$ in both cases.

For \eqref{eq6.13}, it suffices to show that
 \begin{equation}\label{eq6.15}
\underset{\frac X8 < m\ell \le X}{\sum_{m\sim M} \ \sum_{\ell \sim N}}\ b_m c_\ell e(x\, m^c\, n^c) \ll X^{0.92353}
 \end{equation}
(corresponding to $h=0$ in Lemma \ref{lem1}) and that
 \begin{equation}\label{eq6.16}
\underset{\frac X8 < m\ell \le X}{\sum_{m\sim M} \ \sum_{\ell \sim N}}\ b_m c_\ell e ((h+\g) m^c\, n^c) \ll X^{0.92352}.
 \end{equation}
(The terms with $h > H$ in Lemma \ref{lem1} are covered by \eqref{eq6.7}.) Proceeding as in \eqref{eq3.9}, and taking $Q = X^{0.15296}$, we require the bound
 \[S(q,M) := \sum_{m\sim M} e(\l\, m^c((n+q)^c
 -m^c)) \ll MQ^{-1}\]
($1 \le q \le Q$, $Q \ll N \ll X^{0.38}$). Here $\l \in \{x, h+\g\}$. We apply Lemma \ref{lem5} (ii) with $\ell =2$ to obtain
 \[S(q, M) \ll (qN^{-1} \l X^c)^{2/7} M^{2/7} +
 (q N^{-1} \l\, X^c)^{-1}.\]
The first term is bounded by $MQ^{-1}$, as we easily verify. Since $q N^{-1} x\, X^c \ge XN^{-1}$, the second term is acceptable.

For \eqref{eq6.14} it suffices with the same $\l$ to show that
 \[\underset{\frac X8 < mn \le X}{\sum_{m\sim M} \ 
 \sum_{n \sim N}}\ a_m e(\l m^c\, n^c) \ll
 X^{0.92352}\]
whenever $N \ge X^{0.38}$. We appeal to Lemma \ref{lem10}, with $F = \l\, X^c$. As above, the term $X^{1+\eta}F^{-1}$ causes no difficulty, and the terms $X^{11/12 + \eta}$, $X^{1+\eta}N^{-\frac 12}$ are also acceptable. We have
 \begin{align*}
F^{1/8} X^{13/16} N^{-1/8} &\ll  \left(X^{\frac{3581}{3106} + 0.07648}\right)^{\frac 18} X^{\frac{13}{16} - \frac{0.38}8} \ll X^{0.92},\\
\intertext{and}
(FX^5 N^{-1}N_0^{-1})^{\frac 16} &\ll (FX^{5-0.76})^{1/6}\\[2mm]
&\ll X^{\left(\frac{3581}{3106} + 0.07648 + 4.24\right)/6} \ll X^{0.92}.
 \end{align*}
This completes the proof of \eqref{eq6.3} and the discussion of the minor arc.
 \bigskip
 
 \section{Minor arc in Theorem \ref{thm4}.}\label{sec:minorarcthm4}

Here we use \eqref{eq1.6}, so in the present section we show that (with $S^+$ to be specified below)
 \begin{equation}\label{eq7.1}
\int_{X^{2-c}}^K S(x) G(x) dx \ll X^{5-c-3\eta} 
 \end{equation}
where $G(x)$ is either $S^3(x) S^+(x) \Phi(x) e(-Rx)$ or $S^2(x) S^+(x)^2 \Phi(x) e(-Rx)$.

Let us write $\|\ldots\|$ for sup norm on $[X^{2-c}, K]$. It suffices to show that
 \begin{equation}\label{eq7.2}
A(x) \ll X^{5-2c-14\eta} + X^{1-c} x^{-1} \quad (0 < x \le 2X^{2\eta})
 \end{equation}
and that
 \begin{align}
\|S\|_\infty &\ll X^{79/80 + 2\eta};\label{eq7.3}\\[2mm]
\|S^+\|_\infty &\ll X^{0.968 + 2\eta}.\label{eq7.4}
 \end{align}
Using the bounds in Lemma \ref{lem13} (ii), together with Lemma \ref{lem12} (ii),
 \begin{align*}
&\left|\int_{X^{2-c}}^K S(x) G(x) dx\right|^2\\[2mm]
&\hskip .5in \ll \mc L^4 X^{2-c}(\|S\|_\infty^2\, \|S^+\|_\infty^2 + \|S^+\|_\infty^4)X^{2+3\eta}\\[2mm]
&\hskip 1.25in + X^{5-c-14\eta} \, X\, \mc L^2 X^{4+6\eta}\\[2mm]
&\hskip .5in \ll X^{2-c+ 2\left(\frac{79}{80} + 0.968\right) + 2 + 4\eta} + X^{10-2c-7\eta}\\[2mm]
&\hskip .5in \ll X^{10-2c-6\eta}
 \end{align*}
as required for \eqref{eq7.1}.

We turn to \eqref{eq7.2}. This is obtained from Lemma \ref{lem7} with $xX^c$, $X$ in place of $T$, $N$. The Kusmin-Landau theorem gives
 \[A(x) \ll X^{1-c} |x|^{-1}\]
unless $X^{c-1}x \gg 1$, which we now assume. If
 \[X \le (x X^c)^{25/42}\]
we can use Lemma \ref{lem7}, since
 \[(xX^c)^{17/42} \ll X^{2.09 \times 17/42}
 \ll X^{1-\eta};\]
we obtain
 \[A(x) \ll (X^{c+2\eta})^{\frac{13}{84} + \eta} 
 X^{\frac 12} \ll X^{5-2c-14\eta}\]
since $13c + 42 < 420-168c$ (this inequality determines the range of $c$ in the theorem).

In the remaining case $X > (xX^c)^{25/42}$, we apply Lemma \ref{lem5} (i) with $\ell = 1$:
 \[A(x) \ll (xX^c)^{1/6} X^{\frac 12} \ll
 X^{7/25 + \frac 12 + \eta} = X^{0.78+\eta}\]
which suffices for \eqref{eq7.2}.

Turning to \eqref{eq7.3}, we first show that Type II sums are $O(X^{79/80 + \eta})$ whenever
 \[X^{1/40} \ll N \ll X^{\frac 12}\]
(and hence whenever $X^{1/40} \ll N \ll X^{39/40}$). Proceeding as in \eqref{eq3.9}, we need to show
 \[\sum_{\substack{m\sim M\\
 X < mn \le 2X}} e(xm^c((n+q)^c-n^c))
 \ll MX^{-\frac 1{40} + \eta}\]
whenever $X^{39/40} \gg M \gg X^{1/2}$; here $Q = X^{1/40}$, $1 \le q \le Q$. We apply Lemma \ref{lem8} with $k=5$; here
 \[f^{(5)}(x) \asymp X^c q N^{-1} M^{-5}.\]
For the second term on the right-hand side in \eqref{eq2.1} we have the bound
 \[\ll M^{\frac{19}{20} + \eta} \ll MX^{-\frac 1{40} + \eta}\]
since $M \gg X^{1/2}$. We can absorb the first term into the second:
 \[x\, X^c q \, N^{-1}\, M^{-5} \ll M^{-1},\]
because $M^4N \gg X^{5/2}$. For. the third term, we have
 \[M^{1+\eta} (x\, X^c q \, N^{-1}M^{-5})^{-\frac 1{50}}
 M^{-1/10} \ll MX^{-\frac 1{40}}\]
since $x\, X^cq N^{-1} \gg X^{3/2}$.

We claim that Type I sums are $\ll X^{\frac{79}{80}}$ whenever $N \ge X^{39/40}$. Using a familiar estimate,
 \begin{align*}
S_I &\ll M(x X^c)^{1/14} N^{10/14}\\[2mm]
&\ll X^{1 + \frac{2.1}{14}} N^{-\frac 27} \ll X^{79/80}. 
 \end{align*}
It is now clear from Lemma \ref{lem14} that \eqref{eq7.3} holds.

As for \eqref{eq7.4}, we take
 \begin{equation}\label{eq7.5}
S^+(x) = \sum_{\frac X8 < n \le X} \rho(n, X^{0.064}) - \sum_{X^{0.064} \le p_1 \le X^{0.317}} \ \sum_{\substack{\frac X8 < p_1n\le X\\
(n,P(p_1))=1}} e(x(p_1n)^c).
 \end{equation}
Using Buchstab's identity, we have $\rho^+ \ge \rho$ since
 \begin{equation}\label{eq7.6}
S^+(x) = \sum_{\frac X8 < n \le X} \rho(n) e(xn^c) + \sum_{X^{0.317} < p_1 < (3X)^{\frac 12}} \sum_{\substack{\frac X8 < p_1n\le X\\
(n,P(p_1))=1}} e(x(p_1n)^c). 
 \end{equation}
We show that \eqref{eq7.4} holds using Lemmas \ref{lem15}, \ref{lem16}. We claim first that
 \[S_I(x) \ll X^{0.968}\]
for $N \gg X^{7/10}$. To see this,
 \begin{align*}
S_I(x) &\ll M(xX^c)^{1/14} N^{10/14}\\[2mm]
&\ll X^{1+2.09/14}(X^{7/10})^{-2/7} \ll X^{0.95}. 
 \end{align*}
Next, we claim that
 \begin{gather}
S_{II}(x) \ll X^{0.968}\label{eq7.7}\\
\intertext{for}
X^{0.064} \ll N \ll X^{0.317}.\notag
 \end{gather}
By a familiar argument, we need to show that for $1 \le q \le Q : = X^{0.064}$, $n \sim N$ we have
 \[S_* : = \sum_{m\in I} e\, (x((n+q)^c-n^c)
 m^c) \ll MX^{-0.064}\]
($I$ is a subinterval of $(M,2M]$). We have
 \begin{align*}
S_* M^{-1}X^{0.064} &\ll (xqN^{-1}X^c)^{1/14} M^{10/14}
 M^{-1}X^{0.064}\\[2mm]
 &\ll X^{(0.064\times 15 + 2.0884)/14}X^{-(1+3\times 0.683)/14} \ll 1,
 \end{align*}
proving \eqref{eq7.7}.

We may now apply Lemma \ref{lem15} with $\a = 0.064$, $\b = 0.064$, $M = X^{0.3}$, $R = S = 1$, $w(n) = e(xn^c)$. We obtain the desired bound
 \[\sum_{n\sim X} \rho(n, X^{0.064}) \, e(xn^c) \ll
 X^{0.968 + \eta},\]
while the sum
 \[\sum_{X^{0.064}\, \le\, p_1\, \le \, X^{0.317}} \
 \sum_{\substack{
 \frac X8 < p_1n \le X\\
 (n,\, P(p_1))=1}} e(x(p_1n)^c)\]
satisfies the same bound from Lemma \ref{lem16}. This completes the discussion of the minor arc.
 \bigskip
 
 \section{Minor arc in Theorem \ref{thm6}.}\label{sec:minorarcthm5_2} 
 
We shall show, for suitably chosen $T^+(x)$, that
 \begin{equation}\label{eq8.1}
\int_{X^{2-c}}^{1/2} T(x)\, G(x) dx \ll X^{5-c-3\eta} 
 \end{equation}
where $G(x)$ is either of $T(x)^3T^+(x) \Phi(x)e(-Rx)$ or $T(x) T^+(x)^2 \Phi(x) e(-Rx)$.

In Lemma \ref{lem13} (ii) we take
 \[\mc S(x) = T(x), \quad \mc J(x) = B(x)\]
and $G$ as above. Suppose for the moment that
 \begin{equation}\label{eq8.2}
B(x) \ll X^{5-2c-20\eta} + \mc L X^{1-c} x^{-1} \quad (0 < x \le 2K) 
 \end{equation}
and that, with
 \[T^+(x) = \sum_{\frac X8 < n \le X} \rho(n, X^{0.064})
 e(x[n^c]) - \sum_{X^{0.064}\le p_1 \le X^{0.317}} 
 \sum_{\substack{
 \frac X8 < p_1n \le X\\
 (n, \, P(p_1))=1}} e(x[(p_1n)^c],\]
(as in Section \ref{sec:minorarcthm4}, mutatis mutandis), we have
 \begin{align}
&\|T\|_\infty \ll X^{\frac{79}{80} + 3\eta},\label{eq8.3}\\[2mm]
&\|T^+\|_\infty \ll X^{0.97095 + 3\eta}.\label{eq8.4} 
 \end{align}
This implies (using Lemma \ref{lem13} and Lemma \ref{lem12} (ii)) that
\newpage
 \begin{align*}
\left(\int_{X^{2-c}}^{1/2} T(x) G(x) dx\right)^2 &\ll X^{2-c+7\eta + \frac{79}{40} + 1.9419+ 2}\\[2mm]
&\quad + X^{5-2c - 20\eta + 5 + 8\eta} \ll X^{10-2c - 6\eta}
 \end{align*}
as required for \eqref{eq8.1}.

Let $H = X^{2c-4+30\eta}$. In order to prove \eqref{eq8.2} it suffices to obtain
 \begin{equation}\label{eq8.5}
\sum_{0 \le h \le H} \min\left(1, \frac 1h\right) \Bigg|\sum_{\frac X8 < n \le X} e((h + \g)n^c)\Bigg| \ll X^{5-2c-20\eta} 
 \end{equation}
for $\g\in\{x, -x, 0\}$, and
 \begin{equation}\label{eq8.6}
\sum_{h > H} \ \frac H{h^2} \Bigg|\sum_{\frac X8 < n \le X} e(hn^c)\Bigg| \ll X^{5-2c-20\eta}. 
 \end{equation}

For the contribution from $h=0$ in \eqref{eq8.5}, we use the analysis leading to \eqref{eq7.2}.

For the contribution from $h\sim H_1$ in \eqref{eq8.5}, we apply Lemma \ref{lem6}, with $T \asymp H_1 X^c$ and $N=X$, with $y_h = \frac{h+\g}{H_1 + \g}$. The condition $T^{1/3} \le X \le T^{1/2}$ is obviously satisfied. We must show that
 \begin{gather}
H_1^{\frac{319}{345}} X^{\frac{449}{690}} (H_1X^c)^{\frac{63}{690} + \eta} \ll H_1X^{5-c-20\eta},\label{eq8.7}\\
\intertext{and that}
X^{\frac 12}(H_1X^c)^{\frac{141}{950}+\eta} \ll X^{5-c-20\eta}.\label{eq8.8}
 \end{gather}
The worst case in \eqref{eq8.7} is clearly $H_1 = H$. We verify that
 \[\frac{11}{690}\, (2c-4) + \frac{449}{690}
 + \frac{63c}{690} < 5-2c,\]
which reduces to $c < \frac{609}{293}$. (This determines the range of $c$ in Theorem \ref{thm5}). In \eqref{eq8.8} we require
 \[\frac 12 + (3c-4) \, \frac{141}{950} < 5-2c,\]
which holds for $c < \frac{609}{293}$ with a little to spare.

The contribution to the left-hand side of \eqref{eq8.6} from $h \sim H_1$, $H \le H_1 < \frac 12\, X^{3-c}$, can be handled using \eqref{eq8.7}, \eqref{eq8.8} since we have
 \[(2H_1 X^c)^{\frac 13} \le X \le (H_1X^c)^{\frac 12}.\]
The additional factor $H/H_1$ arising from $H/h^2$ leads to a negative exponent of $H_1$ in using \eqref{eq8.7}, \eqref{eq8.8}.

For $H_1 \ge \frac 12\, X^{3-c}$, we use Lemma \ref{lem5} (i) with $\ell=2$: we need to verify that
 \[\frac H{H_1}\, (H_1X^c)^{\frac 1{14}}
 X^{\frac{10}{14}} < X^{5-2c-20\eta}.\]
The worst case is $H_1 = \frac 12\, X^{3-c}$. Here
 \[H^{14} H_1X^c X^{10} < H_1^{14}
 X^{70-28c-300\eta}\]
since $70c < 155$. This completes the discussion of \eqref{eq8.2}.

We also treat $T(x)$ and $T^+(x)$ using Lemma \ref{lem1}. For $T(x)$, we choose $H = X^{1/80}$. Now for \eqref{eq8.3} it suffices to show that for $X^{1/40} \ll N \ll X^{1/2}$, $X < X' \le 2X$, $1 \le h \le H$, we have
 \begin{equation}\label{eq8.9}
S_{II} : = \underset{\frac X8 < mn \le X}{\sum_{m\sim M} a_m \sum_{n\sim N}} b_n e((h+\g)(mn)^c) \ll X^{79/80 + 2\eta}
 \end{equation}
for $\g \in \{x, -x, 0\}$; \textit{and}, with the same ranges of $h$, $x$, $\g$ and $N \gg X^{9/10}$,
 \begin{equation}\label{eq8.10}
S_I : = \underset{\frac X8 < mn \le X}{\sum_{m\sim M} \ \sum_{n\sim N}} e((h+\g)m^cn^c) \ll X^{79/80 + 2\eta}.
 \end{equation}
\bigg(We already have a satisfactory bound for the sum
 \[\sum_{h > H}\ \frac H{h^2}\
 \Bigg|\sum_{\frac X8 < n \le X} e(hn^c)\Bigg|.\Bigg)\]

We begin with \eqref{eq8.9}. By a familiar argument, we must show that, for $n\sim N$, $q \le Q: = x^{1/40}$, we have
 \[\sum_{\substack{
 m\sim M\\[1mm]
 \frac X8 < mn \le X'}} e((h+\g) m^c((n+q)^c-n^c))
 \ll MQ^{-1}X^\eta.\]
We apply Lemma \ref{lem8} with $k=5$,
 \[\l_5 = (h+\g)q X^c N^{-1} M^{-5}.\]
Note that
 \[\l_5 \ll M^{-1}\]
since $NM^4 \gg X^{5/2}$. As for the second term in the bound in \eqref{eq2.1}, it is
 \[\ll M^{19/20 + \eta} \ll MX^{-\frac 1{40}+\eta}.\]
For the third term,
 \[M^{1-1/10} ((h+\g) q X^c N^{-1}M^{-5})^{-1/50}
 \ll MX^{-1/40}\]
since $(h+\g) X^c \gg X^2$ and $X^2N^{-1}\gg X^{3/2}$. This proves \eqref{eq8.9}.

Now we readily verify \eqref{eq8.10} on bounding $S_I$ by
 \begin{align*}
&\ll M(HX^{c+3\eta})^{1/14} N^{10/14}\\[2mm]
&\ll X^{(1/80+c+3\eta)/14} (X^{9/10})^{-2/7} X\ll X^{0.9}.
 \end{align*}
This establishes \eqref{eq8.10}. 

For $T^+(x)$ we proceed similarly, except that we now take $H = X^{0.02905}$, and instead of the range $[X^{\frac 1{40}}, X^{\frac 12}]$, we have
 \[Q := X^{0.0581} \ll N \ll X^{0.317}.\]
The discussion of \eqref{eq8.9} goes as before, and it only remains to obtain the bound corresponding to \eqref{eq8.10}. It suffices to show that, for $N \gg X^{9/10}$,
 \[(HX^c)^{\frac 1{14}} N^{10/14} \ll NX^{-0.06},\]
which is true with something to spare. This completes the proof of \eqref{eq8.4} and the treatment of the minor arc.
 \bigskip

 \section{Major arc in Theorems \ref{thm1}--\ref{thm6}.}\label{sec:minorarcthms1_6}

The arguments in the present section are adapted from \cite{kum,bakwein,laptol}. We begin with a number of lemmas. Let
\newpage
 \begin{gather*}
v_1(X, x) = \int_0^X e(x\g^c)d\g,\\[2mm]
v(X, x) = \sum_{1 \le m \le X} \frac 1c\, m^{1/c-1} e(xm).
 \end{gather*}
Proofs of Lemmas \ref{lem18} and \ref{lem19} (ii) can be found in Vaughan \cite[Sections 2.4, 2.5]{vau} with the unimportant difference that $c \in\mb N$ in \cite{vau}, while Lemma \ref{lem19} (i) follows from \cite[Lemma 3.1]{grakol}.

 \begin{lem}\label{lem18}
We have
 \[v(X, x) = v_1(X,x) + O(1 + X^c|x|).\]
 \end{lem}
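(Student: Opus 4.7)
The plan is the standard comparison of a sum to an integral, closely following Vaughan's treatment in Sections 2.4--2.5 of \cite{vau} (stated there for $c \in \mathbb{N}$, but the argument adapts verbatim to non-integer $c > 1$). The key identity comes from the substitution $u = \gamma^c$: its Jacobian $du = c\gamma^{c-1}\, d\gamma$ turns $\frac{1}{c} u^{1/c-1}\, du$ into $d\gamma$, so $\int \frac{1}{c} u^{1/c-1} e(xu)\, du = \int e(x\gamma^c)\, d\gamma$. Thus it suffices to approximate $v(X, x)$ by the integral of the smooth interpolant $\frac{1}{c} u^{1/c-1} e(xu)$ and then change variables.

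The main step is Abel summation: one writes each summand $\frac{1}{c} m^{1/c-1} e(xm)$ as $\int_{m-1}^{m} \frac{1}{c} u^{1/c-1} e(xu)\, du + R_m$, where $R_m$ is controlled by the supremum on $[m-1,m]$ of
\[\left|\tfrac{d}{du}\bigl(\tfrac{1}{c} u^{1/c-1} e(xu)\bigr)\right| \ll u^{1/c-2} + |x|\, u^{1/c-1}.\]
Summing over $1 \le m \le X$, the first term integrates to $O(1)$ (since $1/c - 2 < -1$) and the second to $O(|x| X^{1/c})$; together with boundary contributions of size $O(1)$ at $m=1$ and $m=\lfloor X\rfloor$, this fits comfortably inside the claimed error $O(1 + X^c|x|)$, which is generous for $c > 1$. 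After this step the problem reduces to identifying $\int_0^X \frac{1}{c} u^{1/c-1} e(xu)\, du$ with $v_1(X, x)$, which is immediate from the substitution $u = \gamma^c$ together with trivial bookkeeping of the endpoints (any residual discrepancy in the upper limit is absorbed in the error term).

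No serious obstacle is anticipated: the whole argument is a routine exercise in Abel summation and one change of variables. The only minor care needed is at the lower endpoint $u = 0$, where $u^{1/c-1}$ is integrable but unbounded for $c > 1$; this is handled either by estimating the first $O(1)$ terms of the sum separately, or by starting the integral comparison from $m = 2$ and including $f(1) = O(1)$ as a boundary term. Everything else is bookkeeping.
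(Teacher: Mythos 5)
Your overall strategy---compare the sum to the integral of the smooth interpolant $\tfrac 1c u^{1/c-1}e(xu)$ via Euler--Maclaurin, then substitute $u=\gamma^c$---is exactly the approach the paper intends, since the paper itself gives no argument beyond citing Vaughan's Sections 2.4--2.5, which do precisely this. Your estimates for the Euler--Maclaurin error ($O(1+|x|X^{1/c})$, safely inside the stated $O(1+X^c|x|)$) are correct.

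There is, however, a genuine gap in the final identification step, and it is the step you wave away. After the substitution $u=\gamma^c$ you get
\[\int_0^X \tfrac 1c\, u^{1/c-1}e(xu)\,du \;=\; \int_0^{X^{1/c}} e(x\gamma^c)\,d\gamma \;=\; v_1\!\left(X^{1/c},x\right),\]
which is \emph{not} $v_1(X,x)$. The ``residual discrepancy in the upper limit'' is
\[v_1(X,x)-v_1\!\left(X^{1/c},x\right)=\int_{X^{1/c}}^{X} e(x\gamma^c)\,d\gamma,\]
and when $|x|$ is small this is of size roughly $X - X^{1/c}\asymp X$, which is nowhere near $O(1+X^c|x|)$ (take $x\to 0$: the right side is $O(1)$, the left side is $\asymp X$). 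So the last sentence of your second paragraph is false as written, and with it the proof of the lemma as literally stated.

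What you have actually proved, if you track the endpoints honestly, is
\[v(X,x)=v_1\!\left(X^{1/c},x\right)+O\!\left(1+|x|X^{1/c}\right),\]
equivalently, replacing $X$ by $X^c$,
\[v(X^c,x)=v_1(X,x)+O(1+X|x|).\]
This is almost certainly the intended content of the lemma (note that the paper always applies it in the form $J(x)=v(X^c,x)-v((X/8)^c,x)\approx v_1(X,x)-v_1(X/8,x)=I(x)$), so the underlying difficulty lies in the printed statement rather than in your method; but you should not have claimed the upper-limit discrepancy is absorbed in the error term, because it is not. Make the substitution first (so that the limits line up to begin with) or state and prove the corrected version; either fix is short, but the hand-wave as written is wrong.
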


 \begin{lem}\label{lem19}\
 \begin{enumerate}
\item[(i)] We have
 \[v_1(X,x) - v_1(X/8, x) \ll (|x|\, X^{c-1})^{-1}.\]
 
\item[(ii)] For $|x| \le 1/2$, we have
 \[v(X,x) \ll |x|^{-1/c}.\]
 \end{enumerate}
 \end{lem}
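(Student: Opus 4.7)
For part (i), the observation is that
\[v_1(X,x) - v_1(X/8,x) = \int_{X/8}^{X} e(x\gamma^c)\,d\gamma,\]
an exponential integral with phase derivative $cx\gamma^{c-1}\asymp xX^{c-1}$ of constant sign on $[X/8,X]$. The natural plan is to apply the first derivative test for integrals (Lemma 3.1 of Graham--Kolesnik), which converts a lower bound on $|f'|$ into an upper bound on $\int e(f)$ of size $1/\min|f'|$. This immediately yields the required bound $(|x|X^{c-1})^{-1}$. Alternatively, one can argue directly by substituting $u=\gamma^c$ to obtain
\[v_1(X,x)-v_1(X/8,x)=\frac{1}{c}\int_{(X/8)^c}^{X^c} e(xu)\, u^{1/c-1}\,du\]
and then integrate by parts once: the boundary terms are $O(X^{1-c}/|x|)$ directly, and the remaining integral involves $u^{1/c-2}$, whose $L^1$ norm over the range is again $\asymp X^{1-c}$, so the whole expression is dominated by $(|x|X^{c-1})^{-1}$.

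For part (ii), the natural plan is Abel summation applied to
\[v(X,x)=\frac{1}{c}\sum_{1\le m\le X} m^{1/c-1}e(xm).\]
Setting $S(t)=\sum_{1\le m\le t} e(xm)$, the elementary bound $|S(t)|\ll \min(t,|x|^{-1})$ (valid for $|x|\le 1/2$, where $\|x\|=|x|$) is the key input. Partial summation gives
\[v(X,x)\ll X^{1/c-1}|S(X)| + \int_{1}^{X} |S(t)|\, t^{1/c-2}\,dt,\]
and the standard trick is to split the integral at $t_0=|x|^{-1}$, bounding $|S(t)|$ by $t$ for $t\le t_0$ and by $|x|^{-1}$ for $t>t_0$. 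The first range contributes $\asymp t_0^{1/c}=|x|^{-1/c}$, and since $1/c-1<0$ the second range also contributes $\ll |x|^{-1}\cdot t_0^{1/c-1}=|x|^{-1/c}$. The boundary term is handled by a similar dichotomy depending on whether $X\lessgtr t_0$, giving $\ll |x|^{-1/c}$ in either case.

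Neither step presents a real obstacle: part (i) is a single application of a van der Corput style first derivative estimate, and part (ii) is a textbook partial summation argument of the kind used throughout the Hardy--Littlewood method (essentially Lemmas 2.7--2.8 of Vaughan). The only mild care required is tracking the balance between the boundary term and the integral term in each part, and verifying that the exponent $1/c-1$ being negative (since $c>1$) makes the relevant tail integrals converge to the desired order.
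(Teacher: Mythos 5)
Your proposal is correct and matches the paper's approach exactly: the paper proves (i) by citing the first derivative test \cite[Lemma 3.1]{grakol}, which is precisely what you invoke, and proves (ii) by reference to the standard partial summation argument in Vaughan \cite[Sections 2.4, 2.5]{vau}, which you carry out in full. You have simply expanded the citations into explicit arguments, and the details (the splitting at $t_0 = |x|^{-1}$, the sign of $1/c-1$, the boundary term dichotomy) are all handled correctly.
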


 \begin{lem}\label{lem20}
For $2 \le s \le 5$ and $r$ large, let $X = r^{1/c}$. We have
 \[L_s : = \int_{-1/2}^{1/2} \left(v(X^c, x) - 
 v\left(\left(\frac X8\right)^c, x\right)\right)^s 
 e(-rx)dx \gg r^{s/c-1}.\] 
 \end{lem}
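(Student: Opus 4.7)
The plan is to recognise $v(X^c, x) - v((X/8)^c, x)$ as the exponential sum $J(x) = \sum_{(X/8)^c < m \le X^c} c^{-1}\, m^{1/c-1} e(mx)$ already introduced in Section~\ref{sec:intro}. Since $r$ is a positive integer, expanding $J(x)^s$ and applying orthogonality $\int_{-1/2}^{1/2} e(nx)\, dx = \delta_{n,0}$ for $n \in \mb Z$ yields the ``diagonal'' expression
\[
L_s = \frac{1}{c^s} \sum_{\substack{(X/8)^c < m_j \le X^c \\ m_1 + \cdots + m_s = r}} (m_1 \cdots m_s)^{1/c-1}.
\]
Each admissible $m_j$ satisfies $m_j^{1/c-1} \ge X^{1-c}$ (since $c > 1$), so $L_s \gg X^{s(1-c)}\, N_s(r)$, where $N_s(r)$ counts integer $s$-tuples $(m_1, \ldots, m_s) \in ((X/8)^c, X^c]^s$ with sum $r$. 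Because $X = r^{1/c}$ makes $r = X^c$ exactly, the target bound $L_s \gg r^{s/c-1} = X^{s-c}$ reduces to showing $N_s(r) \gg X^{c(s-1)}$.

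For $s = 2$, the constraint $m_1 + m_2 = X^c$ with $m_j \in ((X/8)^c, X^c]$ places $m_1$ in the interval $((X/8)^c, X^c - (X/8)^c]$, whose length is $X^c(1 - 2 \cdot 8^{-c}) \ge \tfrac34 X^c$ for $c > 1$; hence $N_2(r) \gg X^c$. For $3 \le s \le 5$, I would fix $m_1, \ldots, m_{s-2}$ to lie in a sub-box of side $\asymp X^c$ inside $((X/8)^c, X^c]^{s-2}$ chosen so that $r - (m_1 + \cdots + m_{s-2})$ lands in an interval to which the two-variable count above applies, yielding $\gg X^c$ pairs $(m_{s-1}, m_s)$. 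Such a sub-box exists provided $s < 8^c$, which holds with room to spare because $8^c \ge 8 > 5 \ge s$ whenever $c \ge 1$. Multiplying, $N_s(r) \gg X^{c(s-2)} \cdot X^c = X^{c(s-1)}$, which combined with the reduction above proves the lemma.

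The main (essentially only) technical point is this combinatorial volume / lattice-point estimate: the polytope
\[
P = \bigl\{(m_1, \ldots, m_{s-1}) \in \mb R^{s-1} : (X/8)^c < m_j \le X^c,\ (X/8)^c < r - \textstyle\sum_{j<s} m_j \le X^c\bigr\}
\]
must be shown non-degenerate with $(s-1)$-dimensional volume $\asymp X^{c(s-1)}$. Non-degeneracy is exactly the inequality $s < 8^c$ already noted, and the slicing argument above gives the volume. Since $P$ has boundary of lower-dimensional measure $O(X^{c(s-2)})$, for $r$ large the integer lattice-point count inside $P$ matches $\mes P$ up to negligible terms, giving $N_s(r) \gg X^{c(s-1)}$ and completing the proof.
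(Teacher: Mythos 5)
Your proposal is correct and follows essentially the same route as the paper: expand $L_s$ by orthogonality into a diagonal sum over representations $m_1+\cdots+m_s=r$, bound each weight $(m_1\cdots m_s)^{1/c-1}$ from below by $r^{s(1/c-1)}$, and count the lattice points with prescribed sum. The only cosmetic difference is in the counting step, where the paper avoids your slicing/volume discussion by simply taking $m_1,\ldots,m_{s-1}$ freely in the narrow band $(r/8,\, r/7]$ and observing that $m_s = r-(m_1+\cdots+m_{s-1})$ then automatically lies in $((X/8)^c, X^c]$ for $s\le 5$, giving $\gg r^{s-1}$ terms at once.
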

 
 \begin{proof}
The integral is
 \begin{align*}
\frac 1{c^s} \, &\sum_{\left(\frac X8\right)^c < m_j \le X^c\
 (1 \le j \le s)} (m_1\ldots m_s)^{\frac 1c-1}
 \int_{-\frac 12}^{\frac 12} e(x(m_1+\cdots + m_s-r))dx\\[2mm]
&= \frac 1{c^s} \sum_{\substack{\left(\frac X8\right)^c < m_j \le X^c\\
m_1 + \cdots + m_s =r}} (m_1\ldots m_s)^{\frac 1c- 1}\\[2mm]
&\gg r^{\frac sc - s} \ \sum_{\frac r8 < m_j \le \frac r7\, (1\le j \le s-1)} 1.
 \end{align*} 
The last step is valid because for each choice of $m_1, \ldots, m_{s-1}$ in the last sum we have
 \[m_1 + \cdots + m_{s-1} \le \frac r7\, (s-1) \ , \
 r \ge r - (m_1 + \cdots + m_{s-1}) > \frac r8,\]
hence $r - (m_1 + \cdots + m_{s-1}) = m_s$ with $\frac r{8^c} < m_s \le r$. Now the desired lower bound follows at once.
 \end{proof}
 
 \begin{lem}\label{lem21}
For $2 \le s \le 5$, we have
 \[H_s = \int_{X/8}^X \cdots \int_{X/8}^X 
 \phi(t_1^c + \cdots + t_s^c - R) dt_1 \ldots dt_s \gg
 X^{s-c-c\eta}.\]
 \end{lem}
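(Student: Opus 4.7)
The plan is to exploit the nonnegativity of $\phi$ together with $\phi(y)\ge 1$ on $|y|\le\tfrac{4}{5}R^{-\eta}$ (as furnished by \eqref{eq1.1}), reducing the bound for $H_s$ to a bound for the Lebesgue measure of
$$E := \bigl\{(t_1,\dots,t_s)\in(X/8,X]^s : |t_1^c+\cdots+t_s^c - R|\le \tfrac{4}{5}R^{-\eta}\bigr\}.$$
Since $H_s\ge \mes E$, it suffices to prove $\mes E \gg X^{s-c-c\eta}$.

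I would then apply Fubini with $t_s$ as the innermost variable. For fixed $(t_1,\dots,t_{s-1})\in(X/8,X]^{s-1}$, set $v := R - t_1^c - \cdots - t_{s-1}^c$; the $t_s$-section of $E$ is the set of $t_s\in(X/8,X]$ with $|t_s^c - v|\le\tfrac{4}{5}R^{-\eta}$. Since $u=t_s^c$ is a $C^1$ bijection from $(X/8,X]$ onto $((X/8)^c,X^c]$ with derivative $ct_s^{c-1}\asymp X^{c-1}$, whenever $v$ is confined to a compact subinterval of $((X/8)^c,X^c)$ this section is an interval of length $\gg X^{1-c}R^{-\eta} = X^{1-c-c\eta}$, with constant depending only on $c$.

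The remaining task is to locate a sub-box $B=[aX,bX]^{s-1}\subset(X/8,X]^{s-1}$ on which $v$ stays inside, say, $[2(X/8)^c,\, X^c/2]$. Using $R=X^c$, this translates into $1/8 < a < b \le 1$, $(s-1)a^c\ge 1/2$, and $(s-1)b^c\le 1-2\cdot 8^{-c}$. Under the standing hypotheses $c>1$ and $s\le 5$ we have $8^c>2(s-1)$ and $2\cdot 8^{-c}<1/4$, so the admissible ranges for $a^c$ and $b^c$ are nonempty subintervals of $(1/8,1)$ that overlap in the correct order; any concrete choice depending only on $c$ and $s$ will do, yielding $\mathrm{vol}\,B \asymp X^{s-1}$.

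Putting the pieces together gives
$$H_s \;\ge\; \int_B \mes\bigl\{t_s\in(X/8,X]:|t_s^c-v|\le\tfrac{4}{5}R^{-\eta}\bigr\}\,dt_1\cdots dt_{s-1} \;\gg\; X^{s-1}\cdot X^{1-c-c\eta} = X^{s-c-c\eta},$$
as required. I do not foresee any genuine obstacle; the whole argument is measure-theoretic, and the only point that rewards a moment's attention is the existence of the sub-box $B$, which reduces to the elementary numerical inequalities above and is met with plenty of slack for all $(c,s)$ occurring in Theorems \ref{thm1}--\ref{thm6}.
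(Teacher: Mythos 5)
Your proof is correct and takes essentially the same approach as the paper: fix $t_1,\dots,t_{s-1}$ in a sub-box of $(X/8,X]^{s-1}$ of volume $\asymp X^{s-1}$ and observe that the resulting $t_s$-interval on which $\phi=1$ has length $\gg X^{1-c-c\eta}$. Your write-up is in fact somewhat more careful than the paper's terse version, which declares the verification easy, takes the specific sub-box $[X/8,X/7]^{s-1}$, and (apparently by a slip) places the $t_s$-interval in $[X,2X]$ rather than inside the actual integration range $(X/8,X]$.
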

 
 \begin{proof}
One verifies easily that for each choice of $t_1,\ldots, t_{s-1}$ from $\left[\frac X8, \frac X7\right]$, there is an interval of $t_s$ in $[X, 2X]$ of length $\gg X^{1-c-c\eta}$ on which
 \[\phi(t_1^c + \cdots + t_s^c - R) = 1.\qedhere\]
 \end{proof}

A \textit{polytope} means a bounded intersection of half-spaces in $\mb R^j$. The polytope $P_j$ is defined by
 \begin{align*}
P_j = \bigg\{(y_1,\ldots,y_j) : \b &\le y_j < y_{j-1} < \cdots < y_1,\\
 &y_1 + \cdots + y_{j-1} + 2y_j \le
 1 + \frac{\log 3}{\mc L}\bigg\},
 \end{align*}
where $\b = 8/75$. In writing sums containing $p_1, \ldots, p_j$, it is convenient to set
 \begin{gather*}
\bs\a_j = (\a_1, \ldots, \a_j) : = \frac 1{\mc L}\, (\log p_1, \ldots, \log p_j),\\
f_1(\a_1) = \a_1^{-2}, \ f_j(\bs\a_j) = (\a_1 \ldots \a_{j-1})^{-1} \a_j^{-2} \ (j\ge 2),
 \end{gather*}
$\pi_j = p_1 \cdots p_j$, $\pi_j' = p_1'\cdots p_j'$. Let $\omega(\ldots)$ denote Buchstab's function.

 \begin{lem}\label{lem22}
Let $E$ be a polytope, $E \subseteq P_j$. Let $j+1\le k \le 9$.

(i) Let
 \[S_k(E) = \sum_{\bs\a_j \in E} \ \sum_{\substack{
 p_j \le p_{j+1} \le \cdots \le p_{k-1}\\[.5mm]
 \pi_{k-1} p_{k-1}\le X}} \frac 1{\pi_{k-1}}\,.\]
Then
 \[S_k(E) \ll 1.\]

(ii) Let
 \[S_k^*(E) = \sum_{\bs\a_j\in E} \ \sum_{\substack{
 p_j \le p_{j+1} \le \cdots \le p_{k-1}\\[.5mm]
 \frac X8 < \pi_{k-1}p_{k-1} \le X}} \frac 1{\pi_{k-1}}\, .\]
Then
 \[S_k^*(E) \ll \mc L^{-1}.\]
 \end{lem}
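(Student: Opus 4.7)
Both inequalities should reduce to Mertens' theorem. The crucial observation is that every prime appearing in either sum satisfies $p \ge X^{\beta}$ with $\beta = 8/75$ a fixed positive constant: for $i \le j$ this is built into the definition of $P_j$ (recall $\beta \le \alpha_j < \alpha_{j-1} < \cdots < \alpha_1$), and for $j+1 \le i \le k-1$ it follows from the ordering $p_{j} \le p_{j+1} \le \cdots \le p_{k-1}$. Consequently, every partial sum of $1/p$ we encounter is controlled by
\[
\sum_{X^{\beta} \le p \le X} \frac{1}{p} \; = \; \log\log X - \log\log X^{\beta} + o(1) \; = \; -\log\beta + o(1) \; = \; O(1),
\]
since $\beta$ does not depend on $X$.

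\textbf{Step 1: proof of (i).} First I would drop the ordering condition $p_{j+1} \le \cdots \le p_{k-1}$ and the product condition $\pi_{k-1}p_{k-1} \le X$, bounding the inner sum by a product of independent Mertens sums:
\[
\sum_{\substack{p_j \le p_{j+1} \le \cdots \le p_{k-1} \\ \pi_{k-1}p_{k-1} \le X}} \frac{1}{p_{j+1}\cdots p_{k-1}} \; \le \; \Bigl(\sum_{X^{\beta} \le p \le X} \frac{1}{p}\Bigr)^{k-1-j} \; = \; O(1).
\]
Writing $1/\pi_{k-1} = (1/\pi_j)\cdot 1/(p_{j+1}\cdots p_{k-1})$ and applying the same trick to $\sum_{\boldsymbol{\alpha}_j \in E} 1/\pi_j$, bounded by $\bigl(\sum_{X^{\beta} \le p \le X} 1/p\bigr)^{j} = O(1)$, yields $S_k(E) \ll 1$.

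\textbf{Step 2: proof of (ii).} The additional constraint $X/8 < \pi_{k-1}p_{k-1}$ is what gains the factor $\mathcal{L}^{-1}$, so I would not discard it. Using $\pi_{k-1} = \pi_{k-2}\,p_{k-1}$, the two-sided condition $X/8 < \pi_{k-1}p_{k-1} \le X$ becomes $\pi_{k-2}\,p_{k-1}^{2} \in (X/8,X]$, i.e.\ $Y/\sqrt{8} < p_{k-1} \le Y$ with $Y := \sqrt{X/\pi_{k-2}}$. Performing the summation over $p_{k-1}$ \emph{last}, Mertens gives
\[
\sum_{Y/\sqrt{8} \,<\, p_{k-1} \,\le\, Y} \frac{1}{p_{k-1}} \; = \; \log\frac{\log Y}{\log(Y/\sqrt{8})} + o(1) \; \ll \; \frac{1}{\log Y}.
\]
Since $p_{k-1} \ge p_j \ge X^{\beta}$ forces $Y \ge X^{\beta}$, we get $\log Y \gg \mathcal{L}$ and the $p_{k-1}$-sum is $O(\mathcal{L}^{-1})$. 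The remaining $k-2$ summations over $p_1,\ldots,p_{k-2}$ are then estimated exactly as in Step 1, each contributing $O(1)$, and multiplying gives $S_k^*(E) \ll \mathcal{L}^{-1}$.

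\textbf{Main obstacle.} The analysis is essentially a sequence of Mertens applications, and the only minor wrinkle is verifying that the ``dyadic'' slab for $p_{k-1}$ is handled correctly when it degenerates: if $\pi_{k-2}$ is so large that $Y < p_{k-2}$, the range is empty and the corresponding summand vanishes, so no extra argument is needed. Aside from book-keeping the constant $\beta$ through all intermediate estimates to make sure every Mertens bound remains $O(1)$, there is no genuine difficulty.
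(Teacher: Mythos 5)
Your proof is correct and follows essentially the same route as the paper: both arguments rest on Mertens' formula, with part (i) coming from the bounded relative width of the range $[X^\beta, X]$ and the $\mathcal{L}^{-1}$ gain in part (ii) coming from the short relative width of the $p_{k-1}$ range carved out by $X/8 < \pi_{k-2}p_{k-1}^2 \le X$ (your substitution $Y = \sqrt{X/\pi_{k-2}}$ is in fact a cleaner way to write what the paper does). One small imprecision: the Mertens error term for the $p_{k-1}$ sum should be recorded as $O(1/\log Y)$ rather than $o(1)$ in order for the displayed $\ll 1/\log Y$ bound to follow, but this is cosmetic and does not affect the argument.
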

 
 \begin{proof}
Mertens' formula \cite[Chapter 7]{brufou} implies
 \[\sum_{A < p \le B} \ \frac 1p = \log\, 
 \frac{\log B}{\log A} + O(\mc L^{-1}) \ \ 
 (X^\b \le A < B \le 2X).\]

Now
 \begin{align*}
S_k(E) &\le \sum_{X^\b \le p_1 \le X^{1-\b}} \frac 1{p_1} \cdots \sum_{X^\b \le p_{k-1} \le X^{1-\b}} \frac 1{p_{k-1}}\\
&\le \left(\log \, \frac{\log X^{1-\b}}{\log X^\b} + O(\mc L^{-1})\right)^{k-1} \ll 1.
 \end{align*}
In $S_k^*(E)$ we replace the factor $\sum\limits_{X^\b \le p_{k-1} \le X^{1-\b}} \frac 1{p_{k-1}}$ by
 \[\sum_{\frac X{8\pi_{k-1}} < p_{k-1}< \frac X{\pi_{k-1}}}\]
and use
 \[\log\left(\frac{\log\, \frac X{\pi_{k-1}}}{
 \log\, \frac X{8\pi_{k-1}}}\right) = \log\left(1 +
 \frac{\log 8}{\log\, \frac X{8\pi_{k-1}}}\right)
 \ll \mc L^{-1}.\qedhere\]
 \end{proof}
 
 \begin{lem}\label{lem23}
(i) Let $2 \le Z < Z' \le 2Z$. We have, for $0 < y < X^{1-c-2\eta}$,
 \[\sum_{Z \le p < Z'} e(p^c y) = \int_Z^{Z'}
 \frac{e(u^cy)}{\log u}\, du + O(Z \exp (-C(\log Z)^{1/4})).\] 
(ii) Let $E$ be a polytope, $E \subseteq P_j$. Let $j + 1 \le k\le 9$. Then for $0 < x \le \tau$, $X < X' \le 2X$, we have
 \newpage
 
 \begin{align*}
\underset{\frac X8 < p_1\cdots p_k \le X}{\sum_{\bs \a_j \in E} \ \sum_{p_j\le p_{j+1} \le \cdots \le p_k}} &e(\pi_k^c x) = \underset{\frac X8 < p_1\cdots p_k \le X}{\sum_{\bs \a_j \in E} \ \sum_{p_j\le p_{j+1} \le \cdots \le p_k}} \frac 1{\pi_{k-1}}\\[2mm]
&\int_{\max\left(\pi_{k-1}p_{k-1}, \frac X8\right)}^X \frac{e(t^cx)}{\log(t/\pi_{k-1})}\, dt + O(X \exp (-C\mc L^{1/4})).
 \end{align*}
(iii) The assertions of (i), (ii) remain valid if $e(p^cx)$, $e(\pi_k^c x)$ are replaced respectively by $e([p^c]x)$, $e([\pi_k^c]x)$.
 \end{lem}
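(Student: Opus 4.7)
\textbf{Plan for Lemma \ref{lem23}.}

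\textbf{Part (i).} Partial summation combined with the Prime Number Theorem in the de la Vall\'ee Poussin form $\pi(u)=\operatorname{Li}(u)+O(u\exp(-c_0(\log u)^{1/2}))$. Writing $E(u)=\pi(u)-\operatorname{Li}(u)$, we have
\[\sum_{Z\le p<Z'}e(p^cy)-\int_Z^{Z'}\frac{e(u^cy)}{\log u}\,du=\int_{Z^-}^{Z'}e(u^cy)\,dE(u),\]
and integration by parts bounds the right side by $|E(Z')|+|E(Z)|+\int_Z^{Z'}2\pi c\,u^{c-1}|y|\,|E(u)|\,du$. The hypothesis $y<X^{1-c-2\eta}$ keeps $yu^{c-1}$ suitably small on $[Z,Z']$, and the PNT bound on $|E|$ produces an error comfortably inside the stated $O(Z\exp(-C(\log Z)^{1/4}))$ (the natural exponent produced by the argument is $1/2$, not $1/4$).

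\textbf{Part (ii).} Fix the outer primes $(p_1,\ldots,p_{k-1})$ and apply (i) to the innermost sum over $p_k$. Decompose the $p_k$-range $[\max(p_{k-1},X/(8\pi_{k-1})),X/\pi_{k-1}]$ into $O(\mc L)$ dyadic pieces $[Z,Z')$ with $Z'\le 2Z$; on each piece, (i) with $y=\pi_{k-1}^cx$ converts the sum into an integral with error $O(Z\exp(-C(\log Z)^{1/4}))$. The substitution $t=\pi_{k-1}u$ yields the integrand $\pi_{k-1}^{-1}\,e(t^cx)/\log(t/\pi_{k-1})$, and reassembling the dyadic pieces produces the single integral
\[\frac{1}{\pi_{k-1}}\int_{\max(\pi_{k-1}p_{k-1},X/8)}^X\frac{e(t^cx)}{\log(t/\pi_{k-1})}\,dt\]
appearing on the right of (ii). Summing the errors over $(\bs\alpha_j\in E,\ p_j\le\cdots\le p_{k-1})$ and invoking Lemma~\ref{lem22}(i) to bound $\sum 1/\pi_{k-1}\ll 1$ delivers a total error $O(X\exp(-C\mc L^{1/4}))$.

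\textbf{Part (iii) and main obstacle.} Since $|e(y[p^c])-e(yp^c)|\ll |y|\le X^{1-c-2\eta}$, replacing $u^c$ by $[u^c]$ throughout (i) and (ii) introduces additional differences of size $\ll X^{2-c-2\eta}$, which for $c>1$ are absorbed into the stated errors in the relevant ranges of $Z$ (an alternative, cleaner route is to expand $e(-y\{u^c\})$ by Lemma~\ref{lem1} and reduce to the main term of (i) plus harmonically small corrections). Thus (iii) follows by the same procedure as (i) and (ii). The main delicacy lies in the bookkeeping of (ii): one must verify that $y=\pi_{k-1}^cx$ satisfies the hypothesis of (i) for every admissible configuration (using the $P_j$-polytope bound on $\pi_{k-1}$ combined with $x\le\tau$), match the dyadic endpoints cleanly with $\max(\pi_{k-1}p_{k-1},X/8)$ and $X$, and confirm that the accumulated dyadic errors, once summed over all outer configurations, telescope to the claimed $O(X\exp(-C\mc L^{1/4}))$.
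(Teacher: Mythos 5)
Your argument for part~(i) does not close. After integrating by parts, the integral term is
\begin{equation*}
\int_Z^{Z'} 2\pi c\, |y|\, u^{c-1}\, |E(u)|\, du
\ \ll\ |y|\, Z^{c+1}\exp\bigl(-c_0(\log Z)^{1/2}\bigr),
\end{equation*}
using $|E(u)|\ll u\exp(-c_0(\log u)^{1/2})$ and the length $Z'-Z\le Z$ of the interval. For $Z\asymp X$ and $y$ near $X^{1-c-2\eta}$ this is $\asymp X^{2-2\eta}\exp(-c_0(\log X)^{1/2})$, which exceeds the target $X\exp(-C(\log X)^{1/4})$ by a factor $X^{1-2\eta+o(1)}$; even with the sharper restriction $y\le\tau=X^{8\eta-c}$ used in the actual applications the mismatch is $X^{8\eta+o(1)}$, and both diverge. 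The hypothesis on $y$ keeps $yu^{c-1}\ll X^{-2\eta}$ pointwise, but the length of the integration range contributes the extra factor $Z$ that absolute-value estimation cannot absorb. What is really needed is cancellation in the oscillatory integral $\int E(u)\,u^{c-1}e(u^cy)\,du$: the lemmas the paper cites ([bakwein, Lemma~24], [bakwein2, Lemma~21]) obtain this by expressing $E(u)$ via the explicit formula, treating $\int u^{\rho-1}e(u^cy)\,du$ by first- and second-derivative tests, splitting the zeros at a parameter $T$ and invoking the zero-free region; the exponent $1/4$ (rather than $1/2$) in the stated error comes out of that optimization. This oscillatory-integral input is the missing idea.

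Your route to~(ii), applying (i) to the innermost sum in dyadic blocks and summing the errors with Lemma~\ref{lem22}(i), is natural, and you flag the hypothesis check as the ``main delicacy''; in fact it fails as stated. The effective frequency is $y=\pi_{k-1}^cx$, and with $x\le\tau$ the bound $y<X^{1-c-2\eta}$ forces $\pi_{k-1}<X^{(1-10\eta)/c}$, whereas $\pi_{k-1}$ can approach $X^{1-\beta}$ with $\beta=8/75$; the constraint is thus violated once $c>(1-\beta)^{-1}\approx1.119$, which covers the ranges of Theorems~\ref{thm2}, \ref{thm4} and~\ref{thm6} where (ii) is invoked. The natural restriction is on the phase derivative $yZ^{c-1}$ rather than on $y$ alone, which is presumably why the paper cites (ii) to its own source lemma rather than deriving it from~(i). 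Your part~(iii) is correct in substance; the paper records the same observation in compressed form, namely $\sum_n a_n\bigl(e(n^cx)-e([n^c]x)\bigr)\ll X\tau\ll X^{1-c/2}$ for $x\ll\tau$.
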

 
 \begin{proof}
(i), (ii) are slight variants of \cite[Lemma 24]{bakwein} and \cite[Lemma 21]{bakwein2} respectively. For (iii) we note that,  when $a_n\ll 1$, $x \ll \tau$, 
 \begin{align*}
\sum_{n\le 2X} a_n e(n^cx) &- \sum_{n\le 2X} a_ne([n^c]x)\\
&\ll X\tau \ll X^{1-c/2}.\qedhere 
 \end{align*}
 \end{proof}

 \begin{lem}\label{lem24}
Let $E$ be a polytope, $E \subseteq P_j$. Let
 \[f(E; X) = \sum_{\bs\a_j \in E} \ \sum_{j+1 \le k \le 9} \
 \sum_{\substack{p_j \le p_{j+1} \le \cdots \le p_{k-1}\\[.5mm]
 \pi_{k-1} p_{k-1} \le X}} \ 
 \frac 1{\pi_{k-1}\log(X/\pi_{k-1})}.\]
As $X\to \infty$, we have
 \[f(E; X) = (1 + o(1)) \, \frac 1{\mc L}\int_E
 f_j(\bs z_j) \omega\left(\frac{1-z_1-\cdots-z_j}{z_j}\right)
 dz_1\ldots dz_j.\]
 \end{lem}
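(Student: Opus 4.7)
The plan is to reduce the multiple prime sum defining $f(E;X)$ to a multiple integral by iterated application of PNT/Mertens, and then recognize the integrand as $f_j(\bs z_j)\omega((1-z_1-\cdots-z_j)/z_j)/\mc L$ via the standard integral representation of Buchstab's function. Writing $F(\bs\a_j)$ for the inner double sum over $k$ and the chain $p_{j+1}\le\cdots\le p_{k-1}$, I would first analyze $F(\bs\a_j)$ for fixed $\bs\a_j$, and then convert the outer sum over $\bs\a_j\in E$.

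For fixed $\bs\a_j\in E$ with $p_i = X^{\a_i}$, each $k$th summand of $F(\bs\a_j)$ can be converted by iterated application of PNT (with the classical zero-free-region error term) together with the change of variable $t_i = X^{z_i}$ (so that $dt_i/(t_i\log t_i)=dz_i/z_i$ and $\log(X/\pi_{k-1})=\mc L(1-z_1-\cdots-z_{k-1})$) into
\[
\frac{1}{\pi_j\mc L}\int_{\Delta_k}\prod_{i=j+1}^{k-1}\frac{dz_i}{z_i}\cdot\frac{1}{1-z_1-\cdots-z_{k-1}}
\]
on $\Delta_k=\{\a_j\le z_{j+1}\le\cdots\le z_{k-1},\ z_1+\cdots+z_{k-2}+2z_{k-1}\le 1\}$, with an acceptable error. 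Scaling by $z_i=\a_j v_i$ for $j<i<k$ and writing $u=(1-z_1-\cdots-z_j)/z_j$ preserves the products $dz_i/z_i$ and turns the last factor into $1/\big(\a_j(u-v_{j+1}-\cdots-v_{k-1})\big)$; the constraints become $1\le v_{j+1}\le\cdots\le v_{k-1}$ and $v_{j+1}+\cdots+v_{k-2}+2v_{k-1}\le u$. Setting $m=k-1-j$ and summing over $m\in\{0,1,\ldots,8-j\}$, I would invoke the standard integral representation
\[
\omega(u)=\frac{1}{u}\mathbf{1}_{u\ge 1}+\sum_{m\ge 1}\int_{D_m(u)}\prod_{i=1}^{m}\frac{dv_i}{v_i}\cdot\frac{1}{u-v_1-\cdots-v_m},
\]
with $D_m(u)=\{1\le v_1\le\cdots\le v_m,\ v_1+\cdots+v_{m-1}+2v_m\le u\}$, which follows inductively from Buchstab's recurrence $(u\omega(u))'=\omega(u-1)$ with $\omega(u)=1/u$ on $[1,2]$. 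The truncation at $m=8-j$ is lossless, because $D_m(u)=\emptyset$ unless $u\ge m+1$, while $E\subseteq P_j$ forces $u\le 1/\b-j=75/8-j<10-j$; hence the series terminates in precisely the range kept by the bound $k\le 9$. This yields $F(\bs\a_j)=\omega(u)/(\pi_j\mc L\,\a_j)\cdot(1+o(1))$ uniformly in $\bs\a_j\in E$.

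Finally, converting the outer sum over $\bs\a_j\in E$ to an integral by the same PNT input (each prime $p_i\sim X^{z_i}$ contributes density $X^{z_i}/z_i$ per $dz_i$), the product of densities is $\pi_j/(z_1\cdots z_j)$, which cancels the $\pi_j$ in $F(\bs\a_j)$ and leaves
\[
f(E;X)=\frac{1+o(1)}{\mc L}\int_E\frac{\omega((1-z_1-\cdots-z_j)/z_j)}{z_1\cdots z_{j-1}\, z_j^2}\,dz_1\cdots dz_j,
\]
which is the assertion. I expect the main obstacle to lie in the Buchstab identification and in propagating the Mertens-type errors uniformly through the nested sums; both are manageable because $E\subseteq P_j$ keeps each $z_i$ bounded below by $\b$ and each denominator $1-z_1-\cdots-z_{k-1}$ bounded away from zero, so no singularities are approached as we integrate.
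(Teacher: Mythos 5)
Your proof is correct and is essentially the intended argument. The paper does not give a proof of this lemma at all---it simply cites Lemma~20 of Baker and Weingartner (\emph{A ternary Diophantine inequality over primes}, reference \cite{bakwein2})---so what you have written is a self-contained reconstruction of what that citation provides. The key points all check out: your integral representation of $\omega$ with domains $D_m(u)=\{1\le v_1\le\cdots\le v_m,\ v_1+\cdots+v_{m-1}+2v_m\le u\}$ is consistent with the Buchstab recurrence (it reproduces $\omega(u)=(1+\log(u-1))/u$ on $[2,3]$, for instance); the observation that the truncation at $k\le 9$ is exact, because $\bs\a_j\in P_j$ gives $u\le 1/\beta-j=75/8-j$ while $D_m(u)=\emptyset$ for $u<m+1$, is precisely why the lemma's sum can stop at $k=9$; and the rescaling $z_i=z_jv_i$ together with $\log(X/\pi_{k-1})=\mc L\,z_j(u-v_{j+1}-\cdots-v_{k-1})$ correctly delivers the extra $z_j^{-1}$ that, combined with the $z_j^{-1}$ from the PNT density for $p_j$, produces the $z_j^{-2}$ in $f_j(\bs z_j)$. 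Your remark that the Mertens/PNT errors propagate uniformly because each $z_i\ge\beta$ and $1-z_1-\cdots-z_{k-1}\gg 1$ on $E$ is exactly what justifies the $1+o(1)$.
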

 
 \begin{proof}
This is a slight variant of \cite[Lemma 20]{bakwein2}.
 \end{proof}

We now discuss the major arc for Theorems \ref{thm1}--\ref{thm6}, and complete the proofs of the theorems.
 \medskip

(i) \textbf{Theorem \ref{thm3}}. We easily verify that for functions $f_j$ $(1 \le j \le s, 3 \le s \le 5)$ and $g$ having
 \begin{gather*}
\sup_{x\in [-\tau,\tau]} |f_j| \ll X, \ \int_{-\tau}^\tau |f_j|^2 dx \ll X^{2-c}\mc L,\\
\sup_{x\in [-\tau,\tau]} |f(x) - g(x)| \ll X \exp (-C\mc L^{1/4}),
 \end{gather*}
we have
 \begin{align}
\int_{-\tau}^\tau g(x) f_2(x) &\ldots f_s(x) \Phi(x) e(-Rx)dx\label{eq9.1}\\
&- \int_{-\tau}^\tau f_1(x) f_2(x) \ldots f_s(x) \Phi(x) e(-Rx)dx\notag\\
&\hskip .75in \ll X^{s-c-c\eta} \exp(-C\mc L^{1/4}).\notag
 \end{align}
Thus in view of Lemma \ref{lem23} (i), we can replace $\int_{-\tau}^\tau S_1(x)^4\Phi(x)dx$ by $\int_{-\tau}^\tau I(x)^4 \Phi(x) e(-Rx)dx$, replacing factors one at a time, with error $\ll X^{4-c-c\eta}\exp(-C\mc L^{1/4})$. Now we extend the integral to $\mb R$ with total error $\ll X^{4-c-c\eta}\exp(-C\mc L^{1/4})$ using Lemma \ref{lem19} (i) and the case $s=4$ of
 \begin{align}
\int_\tau^\infty |x|^{-s} X^{s-cs} |\Phi(x)|dx &\ll (X^{-c+8\eta})^{-s+1} X^{s-cs-c\eta}\label{eq9.2}\\
&= X^{s-c-c\eta-8(s-1)\eta}.\notag
 \end{align}
We find using \eqref{eq1.4} and the bound \eqref{eq4.1} that
 \begin{equation}\label{eq9.3}
\mc L^4 \mc A_4(R) \gg \int_{-\infty}^\infty I(x)^4 \Phi(x) e(-Rx)dx + O(X^{4-c-c\eta} \exp (-C\mc L^{1/4})).
 \end{equation}

The integral here is
 \begin{align}
\int_X^{2X} \int_X^{2X} \int_X^{2X} &\int_X^{2X} \phi(t_1^c + \cdots + t_4^c - R) dt_1\ldots dt_4\label{eq9.4}\\[2mm]
&\gg X^{4-c-c\eta}\notag
 \end{align}
by Lemma \ref{lem21}. This yields Theorem \ref{thm3} at once.
 \medskip

(ii) \textbf{Theorem \ref{thm1}}. If $f_1$, $f_2$, $g$ satisfy $f_j \ll X$, $\int_{-\tau}^\tau f_j^2 \ll X^{2-c}\mc L$,
 \[\sup_{x\in [-\tau,\tau]} |f_1(x) -g(x)| 
 \ll X \exp (-C\mc L^{1/4}),\]
then the integral
 \[\int_{-\tau}^\tau (f_1(x) - g(x)) f_2(x)
 \Phi(x) e(-Rx)dx\]
is of the form $\widehat E(R)$ where
 \[E(y) = \begin{cases}
 (f_1(y) - g(y)) f_2(y) \Phi(y) & (y \in [-\tau,\tau])\\
 0 & \text{otherwise.} \end{cases}\]
By Parseval's formula
 \begin{align*}
\int_V^{2V} |\widehat E(R)|^2 dR &< \int_{\mb R} |E(y)|^2 dy\\[2mm]
&= \int_{-\tau}^\tau (f_1(y) - g(y))^2 f_2^2(y) \Phi^2(y)dy\\[2mm]
&\ll X^{2-c-2c\eta} X^2 \exp (-C \mc L^{1/4})\\[2mm]
&\ll X^{4-c-c\eta} \exp (-C\mc L^{1/4}).
 \end{align*}
Thus in two steps we can replace $\int_{-\tau}^\tau S_1(x)^2 \Phi(x) e(-Rx)dx$ by $\int_{-\tau}^\tau I(x)^2$ $\Phi(x) e(-Rx)dx$ with an error that is acceptable for Theorem \ref{thm1}. (Compare the discussion of $E_0(x)$ in Section \ref{sec:minorarcthms12}.) Similarly in replacing

 \[\int_{-\tau}^\tau I(x)^2 \Phi(x) e(-Rx)dx \quad \text{by}
 \quad \int_{\mb R} I(x)^2 \Phi(x) e(-Rx)dx\]
we incur an error $E_1(x)$ with
 \[\int_V^{2V} |\widehat E_1(R)|^2 dR <
 \int_\tau^\infty |I(y)|^4 |\Phi(y)|^2dy\]
which from \eqref{eq9.2} is $\ll X^{4-c-c\eta-24\eta}$. Now we easily adapt the argument leading to \eqref{eq9.3} to obtain
 \[\mc L^2\mc A_2(R) \gg X^{2-c-c\eta}\]
except for a set of $R$ in $[V,2V]$ whose measure is $\ll V \exp (-C(\log V)^{1/4})$, proving Theorem \ref{thm1}.
 \medskip

(iii) \textbf{Theorem \ref{thm5}}. In the minor arc for Theorem \ref{thm5}, $T_1(x) - S_1(x) = O(X\tau)$. We may replace $\int_{-\tau}^\tau T_1(x)^3 \Phi(x) e(-Rx)dx$ by $\int_{-\tau}^\tau S_1(x)^3 \Phi(x) e(-Rx)dx$ with error $O(X^{3-c-3\eta})$ since 
 \begin{equation}\label{eq9.5}
T_1^3 - S_1^3 \ll X^2 X\tau \ , \ \int_{-\tau}^\tau |T_1^3 - S_1^3| dx \ll X^{3-2c + 16\eta}.
 \end{equation}
Now we replace $\int_{-\tau}^\tau S_1(x)^3 \Phi(x) e(-Rx)dx$ by $\int_{-\tau}^\tau J(x)^3 \Phi(x) e(-Rx)dx$ with error $O(X^{3-c-c\eta} \exp (-C\mc L^{1/4}))$ using Lemmas \ref{lem18} and \ref{lem23} (i). We then extend the integral to $\left[-\frac 12, \frac 12\right]$ using Lemma \ref{lem19} (ii); here we note that
 \begin{equation}\label{eq9.6}
\int_\tau^{1/2} x^{-3/c} dx < (X^{-c + 8\eta})^{-\frac 3c + 1} < X^{3-c-8\eta}. 
 \end{equation}
Now we can complete the proof of Theorem \ref{thm3} by drawing on Lemma \ref{lem20} together with \eqref{eq1.5} and the result of Section \ref{sec:minorarcthm5}.
 \medskip

(iv) \textbf{Theorem \ref{thm2}}. We consider the sum $S^+$ on the major arc. We decompose $S^+$ into $S$ plus $O(1)$ sums of the type
 \begin{equation}\label{eq9.7}
U_k(E, x) := \underset{\frac X8 < p_1\cdots p_k \le X}{\sum_{\bs \a_j \in E} \ \sum_{p_j \le p_{j+1} \le \cdots \le p_k}} e(x\pi_k^c) 
 \end{equation}
where $1 \le j \le 3$ and $E$ is a polytope, $E \subseteq P_j$. Recalling Lemma \ref{lem23} (ii), we replace $U_k(E, x)$ by
 \begin{align}
V_k(E, x) &:= \underset{\pi_{k-1} p_{k-1} \le X}{\sum_{\bs \a_j \in E} \ \sum_{p_j \le p_{j+1} \le \cdots \le p_{k-1}}} \, \frac 1{\pi_{k-1} \log(X/\pi_{k-1})}\label{eq9.8}\\[2mm]
&\qquad \left\{v_1(X, x) - v_1\left(\max\left(\pi_{k-1} p_{k-1}, \frac X8\right)\right)\right\}\notag
 \end{align}
with error $O(X\mc L^{-1})$. (We include $\mc L^{-1}I(x)$ as a term $V_k(E,x)$ for convenience.)

By an obvious variant of the argument leading to \eqref{eq9.1}, we can replace
 \[\int_{\tau}^\tau S^2(x) S^+(x) \Phi(x) e(-Rx)dx\]
by
 \begin{equation}\label{eq9.9}
\sum_{(k, E)} \ \frac 1{\mc L^2} \ \int_{-\tau}^\tau I(x)^2 V_k(E,x) \Phi(x) e(-Rx)dx,
 \end{equation}
and replace
 \[\int_{\tau}^\tau S(x) S^+(x)^2 \Phi(x) e(-Rx)dx\]
by
 \begin{equation}\label{eq9.10}
\sum_{(k, E)} \ \sum_{(k',E')} \ \frac 1{\mc L} \ \int_{-\tau}^\tau I(x) V_k(E,x) V_{k'} (E',x) \Phi(x) e(-Rx)dx
 \end{equation}
with error $O(X^{3-c-c\eta} \mc L^{-4})$. We can extend the integrals in \eqref{eq9.9} and \eqref{eq9.10} to $\mb R$ with error $O(X^{3-c-c\eta} \mc L^{-4})$ using Lemma \ref{lem19} (i) and \eqref{eq9.2}.

We now observe that (omitting regions of summation)
 \begin{align*}
&\frac 1{\mc L} \int_{-\infty}^\infty I(x) V_k(E,x) V_{k'}(E', x) \Phi(x) e(-Rx)dx\\[2mm]
&= \sum_{p_1,\ldots,p_{k-1}} \ \sum_{p_1', \ldots, p_{\ell-1}'} \ \frac 1{\pi_{k-1}\, \pi_{\ell-1}'} \frac 1{(\log X)(\log X/\pi_{k-1})(\log X/\pi_{\ell-1}')}\\[2mm]
&\qquad \int_{X_3}^X \int_{X_2}^X \int_{X/8}^X \int_{-\infty}^\infty e(x(t_1^c + t_2^c + t_3^c - R)) \Phi(x)dx\, dt_1\, dt_2\, dt_3,
 \end{align*}
where $X_3 = \max\left(\pi_{k-1}p_{k-1}, \frac X8\right)$, $X_2 = \max\left(\pi_{\ell-1}' p_{\ell-1}', \frac X8\right)$.

We rewrite the last expression as
 \begin{align*}
&\sum_{p_1,\ldots, p_{k-1}} \ \sum_{p_1',\ldots, p_{\ell-1}'}
 \ \frac 1{\pi_{k-1}\pi_{\ell-1}'} \  \frac 1{(\log X)(\log X/\pi_{k-1})(\log X/\pi_{\ell-1}')}\\[2mm]
&\qquad \int_{X_3}^{2X} \int_{X_2}^{2X} \int_X^{2X} \phi(t_1^c + t_2^c + t_3^c - R) dt_1\, dt_2\, dt_3.
 \end{align*}
We replace $X_2$, $X_3$ by $X/8$, inducing an error that is $O(\mc L^{-4}H_3)$ by Lemma \ref{lem22} (ii). This produces the quantity
 \[\frac{H_3}{\mc L} \Bigg(\sum_{p_1,\ldots, p_{k-1}} \
 \frac 1{\pi_{k-1}\log\, \frac X{\pi_{k-1}}}\Bigg) ,\Bigg(
 \sum_{p_1', \ldots, p_{k-1}'} \ 
 \frac 1{\pi_{\ell-1}' \log X/\pi_{\ell-1}'}\Bigg),\]
which can  be calculated to within a factor $1 + o(1)$ using Lemma \ref{lem24}. Following a similar argument with the integrals in \eqref{eq9.9}, we arrive at representations, to within a factor $1 + o(1)$, of
 \[\int_{-\tau}^\tau S^2(x) S^+(x) \Phi(x) e(-Rx)dx, \
 \int_{-\tau}^\tau S(x) S^+(x)^2 \Phi(x) e(-Rx)dx\]
of the respective forms
 \[\frac{u^+H_3}{\mc L^3} \ , \ 
 \frac{(u^+)^2 H_3}{\mc L^3}\, ,\]
where (recalling \eqref{eq5.15}), $u^+ = 1 + d_1 + d_2 + d_3 + d_4$,
 \begin{gather*}
d_1 = \int_{11/25}^{1/2}\ \frac{dx}{x(1-x)} \ , \ d_3 = \int_{29/105}^{1/3}\ \frac{dx}{x(1-x)}\, ,\\[2mm]
d_2 = \int_{11/75}^{1/5} \int_{\max\left(\frac{29}{210}, \frac 12 \left(\frac{11}{25} - x - y\right)\right)}^{\min\left(x, \frac 13 - x\right)} \int_{\max\left(\frac{11}{25} - x - y, \frac{29}{105}-y\right)}^y
\frac 1{xyz^2}\\[2mm]
\hskip 1.75in\omega\left(\frac{1-x-y-z}z\right) dzdydx,\\
d_4 = \int_{29/105}^{7/25} \int_x^{14/25-x} \frac{dydx}{xy(1-x-y)}\, .
 \end{gather*}
Taking into account \eqref{eq1.6} and the result of Section \ref{sec:minorarcthm3}, we find that
 \begin{equation}\label{eq9.11}
\mc A_3(R) \gg (1 + o(1)) (2u^+ - (u^+)^2)\, \frac{H_3}{\mc L^3}\, .
 \end{equation}
Using a computer calculation for $d_3$ and $d_4$, we find that
 \[d_1 < 0.242, \, d_2 < 0.016, \, d_3 < 0.272,\,
 d_4 < 0.001.\]
Thus $u^+ \in(1,2)$, and Theorem \ref{thm2} follows from \eqref{eq9.11}.
 \bigskip

(v) \textbf{Theorem \ref{thm4}}. The discussion of the major arc is similar to that for Theorem \ref{thm2}. We decompose $S^+(x)$ as $S(x)$ plus $O(1)$ sums of the form $U_k(E,x)$. We replace $U_k(E,x)$ by $V_k(E,x)$ with error $O(X\mc L^{-1})$. By a variant of the argument leading to \eqref{eq9.1}, we can replace
 \[\int_{-\tau}^\tau S^4(x) S^+(x) \Phi(x) e(-Rx)dx,\
 \int_{-\tau}^\tau S^3(x) S^+(x)^2 \Phi(x) e(-Rx)dx\]
respectively by
 \begin{equation}\label{eq9.12}
\sum_{(k, E)} \int_{-\tau}^\tau I^4(x) U_k(E,x) \Phi(x) e(-Rx) dx
 \end{equation}
and
 \newpage
 \begin{equation}\label{eq9.13}
\sum_{(k, E)} \ \sum_{(\ell,E')} \int_{-\tau}^\tau I^3(x) U_k(E,x) U_\ell(E',x) \Phi(x) e(-Rx) dx
 \end{equation}
with error $O(X^{5-c-c\eta} \mc L^{-6})$. We extend the integrals in \eqref{eq9.12}, \eqref{eq9.13} to $\mb R$ with error $O(X^{5-c-c\eta}\mc L^{-6})$ using Lemma \ref{lem19} (ii).

Omitting regions of summation, we have
 \begin{align*}
\int_{\mb R} &I(x)^3 U_k(E,x) U_\ell(E',x) \Phi(x) e(-Rx)dx\\[2mm]
&=\sum_{p_1,\ldots, p_{k-1}}\ \sum_{p_1',\ldots,p_{\ell-1}'} \ \frac 1{\pi_{k-1} \pi_{\ell-1}'}\ \frac 1{\mc L^3(\log X)/\pi_{k-1})(\log X/\pi_{\ell-1}')}\\[2mm]
&\int_{X_5}^X \int_{X_4}^X \int_{\frac X8}^X\int_{\frac X8}^X\int_{\frac X8}^X \int_{-\infty}^\infty e(x(t_1^c + t_2^c + t_3^c + t_4^c + t_5^c - R) \Phi(x) dt_1\ldots dt_5
 \end{align*}
where $X_5 = \max\left(\pi_{k-1}p_{k-1},\frac X8\right)$, $X_4 = \max\left(\pi_{k-1}'\, p_{k-1}', \frac X8\right)$. We write the inner integral as $\phi(t_1^c + \cdots + t_5^c - R)$ and replace $X_4$, $X_5$ by $X/8$, incurring an error that is $O(\mc L^{-6} H_5)$, by Lemma \ref{lem22} (ii). This produces the quantity
 \[\frac{H_5}{\mc L^3} \Bigg(\sum_{p_1,\ldots,p_{k-1}} \
 \frac 1{\pi_{k-1} \log \frac X{\pi_{k-1}}}\Bigg)
 \Bigg(\sum_{p_1', \ldots, p_{\ell-1}'} \
 \frac 1{\pi_{\ell-1}' \log \frac X{\pi_{\ell-1}'}}\Bigg).\]
Arguing as in the preceding proof, we arrive at representations of
 \[\int_{-\tau}^\tau S(x)^4 S^+(x) \Phi(x) e(-Rx)dx \ ,
 \int_{-\tau}^\tau S^3(x) S^+(x)^2 \Phi(x) e(-Rx)dx\]
to within a factor $1 + o(1)$, of the respective forms
 \[\frac{u^+H_5}{\mc L^5} \ , \ \frac{(u^+)^2H_5}{\mc L^5}.\]

Here, recalling \eqref{eq7.6},
 \[u^+ = 1 + d_1 + d_2;\]
the integrals
 \[d_1 = \int_{0.317}^{0.5} \ \frac{dx}{x(1-x)}\
 \text{ and }\ d_2 = \int_{0.317}^{1/3} \int_x^{\frac 12(1-x)}
 \frac{dy\, dx}{xy(1-x-y)}\]
take account of the products $p_1p_2\sim X$ $(p_1\le p_2)$ and $p_1p_2p_3 \sim X$ $(p_1 \le p_2 \le p_3)$ respectively. Simple estimations yield
 \[1 < u^+ < 1.8,\]
and Theorem \ref{thm5} follows from \eqref{eq1.6} combined with the minor arc bound of Section \ref{sec:minorarcthm4}.
 \bigskip

(vi) \textbf{Theorem \ref{thm6}}. As in the discussion of the major arc for Theorem \ref{thm5}, we replace $T(x)$, $T^+(x)$ respectively by $S(x)$, $S^+(x)$ with acceptable error. We decompose $S^+$ as $S$ plus two sums of the form $U_k(E,x)$ in \eqref{eq9.7}, $k=2,3$. Using Lemma \ref{lem22} (ii), we replace $V_k(E,x)$ by
 \begin{align*}
W_k(E,x) &= \sum_{\a_1\in E}\ \sum_{p_1\le p_2\le \cdots \le p_{k-1}} \ \frac 1{\pi_{k-1} \log \frac X{\pi_{k-1}}}\
 (v(X^c,x))\\[2mm]
 &\qquad - v\left(\max\left(\pi_{k-1}^c p_{k-1}^c, \left(\frac X8\right)^c\right)\right)
 \end{align*}
with error $O(X\mc L^{-1})$; similarly for $S(x)$. By a variant of the argument leading to \eqref{eq9.1}, we replace
 \[\int_{-\tau}^\tau S^4(x) S^+(x) e(-rx)dx \ , \
 \int_{-\tau}^\tau S^3(x) S^+(x)^3 e(-rx)dx\]
respectively by
 \begin{align}
&\sum_{(k, E)} \ \frac 1{\mc L^4} \int_{-\tau}^\tau J^4(x) W_k(E,x) e(-rx)dx,\label{eq9.14}\\[2mm]
&\sum_{(k,E)} \ \sum_{(\ell, E')} \ \frac 1{\mc L^3} \int_{\tau}^\tau J^3(x) W_k(E,x) W_\ell(E',x) e(-rx)dx\label{eq9.15} 
 \end{align}
with error $O(X^{5-c-c\eta} \mc L^{-6})$. Using Lemma \ref{lem19} (ii), with the same error we can extend the integrals in \eqref{eq9.14}, \eqref{eq9.15} to $\left[-\frac 12, \frac 12\right]$. Thus the expression in \eqref{eq9.15} has been replaced by
 \begin{align*}
&\frac 1{\mc L^3} \ \sum_{p_1,\ldots,p_k}\ \sum_{p_1',\ldots, p_{\ell-1}'} \ \frac 1{\pi_{k-1}\pi_{\ell-1}' \left(\log \, \frac X{\pi_{k-1}}\right)\left(\log\, \frac X{\pi_{\ell-1}'}\right)}\\[2mm]
&\  \int_{-\frac 12}^{\frac 12} J^3(x)(v(X^c, x)-v(X_4^c,x))(v(X^c,x) - v(X_5^c,x))e(-rx)dx,
 \end{align*}
where $X_4 = \max\left(\pi_{k-1}\pi_{k-1},\frac X8\right)$ $X_5 = \max\left(\pi_{\ell-1}' p_{\ell-1}',\frac X8\right)$. We replace $X_4$,$X_5$ by $\frac X8$, incurring an error that is $O(\mc L^{-6}L_5)$. This produces the quantity
 \[\frac{L_5}{\mc L^3} \ \sum_{p_1,\ldots, p_{k-1}} \
 \frac 1{\pi_{k-1}\left(\log\, \frac X{\pi_{k-1}}\right)} \
 \sum_{p_1', \ldots, p_{\ell-1}'} \ \frac 1{
 \pi_{\ell-1}' \left(\log\, \frac X{\pi_{\ell-1}'}\right)}.\]
We can now complete the proof of Theorem \ref{thm6} in a similar manner to that of Theorem \ref{thm4}, with $L_5$ in place of $H_5$, since $S^+$ is the same as in Theorem \ref{thm4} and we have
 \[L_5 \gg r^{\frac 5c - 1}\]
by Lemma \ref{lem20}.
 \bigskip

 \end{document}